\documentclass[11pt,reqno,a4paper]{amsart}

\usepackage{amsfonts}
\usepackage{epsfig}
\usepackage{graphicx}
\usepackage{amsmath}
\usepackage{amssymb}
\usepackage{color}
\usepackage{mathrsfs}
\usepackage{txfonts}
\usepackage{mathrsfs}
\usepackage[colorlinks=true,linkcolor=blue, urlcolor=red, citecolor=blue]{hyperref}	
\usepackage{subfigure}

\pagestyle{plain} \pagenumbering{arabic}

\makeatletter \@addtoreset{equation}{section} \makeatother

\renewcommand\thetable{\thesection.\@arabic\c@table}

\theoremstyle{plain}
\newtheorem{maintheorem}{Theorem}
\newtheorem{theorem}{Theorem }[section]
\newtheorem{proposition}[theorem]{Proposition}

\newtheorem{lemma}[theorem]{Lemma}

\newtheorem{maincorollary}{Corollary}
\newtheorem{claim}{Claim}

\theoremstyle{definition} \theoremstyle{remark}
\newtheorem{remark}[theorem]{Remark}

\newtheorem{definition}[theorem]{Definition}

\newcommand{\al} {\alpha}

\newcommand{\vep}{\varepsilon}

\newcommand{\supp}{\operatorname{supp}}
\newcommand{\diam}{\operatorname{diam}}
\newcommand{\dist}{\operatorname{dist}}

\newcommand{\Leb}{Leb}

\newcommand{\cU}{\mathcal{U}}

\theoremstyle{remark}

\theoremstyle{definition}

\theoremstyle{plain}

\makeatletter
\@addtoreset{equation}{section}

\setlength{\topmargin}{0cm} \setlength{\headsep}{1cm}
\setlength{\textwidth}{17cm} \setlength{\textheight}{21cm}
\setlength{\headheight}{-0.5cm} \setlength{\oddsidemargin}{-0.5cm}
\setlength{\evensidemargin}{-0.5cm} \setlength{\footskip}{1cm}

\begin{document}

\title[Ergodic optimization for hyperbolic flows and Lorenz attractors]
    {Ergodic optimization for hyperbolic flows and Lorenz attractors}

\author{Marcus Morro, Roberto Sant'Anna and Paulo Varandas}

\address{Marcus Morro, Departamento de Matem\'atica e Estat\'istica, Universidade Federal da Bahia\\
Av. Ademar de Barros s/n, 40170-110 Salvador, Brazil}
\email{marcusmorro@gmail.com}

\address{Roberto Sant'Anna, Departamento de Matem\'atica e Estat\'istica, Universidade Federal da Bahia\\
Av. Ademar de Barros s/n, 40170-110 Salvador, Brazil}
\email{rsantanna@ufba.br}

\address{Paulo Varandas, Departamento de Matem\'atica e Estat\'istica, Universidade Federal da Bahia\\
Av. Ademar de Barros s/n, 40170-110 Salvador, Brazil $\&$ CMUP, University of Porto, Rua do Campo Alegre 687, 4169--007 Porto, Portugal}
\email{pcvarand@gmail.com}
\urladdr{https://sites.google.com/view/paulovarandas/}

\subjclass[2010]{Primary 37D25, 37D30}
\subjclass[2000]{primary
37D20, 
37C10, 
37C50, 
secondary
37C27. 
37A05, 
}
\keywords{Ergodic optimization, hyperbolic flows, gluing orbit property, Lorenz attractors}

\maketitle
 \begin{abstract}
In this article we consider the ergodic optimization for hyperbolic flows and Lorenz attractors
with respect to both continuous and H\"older continuous observables. In the context of hyperbolic flows
we prove that a Baire generic subset of continuous observables
have a unique maximizing measure, with full support and zero entropy, and that a Baire generic subset of
H\"older continuous observables admit a unique and periodic maximizing measure.
These results rely on a relation between ergodic optimization for suspension semiflows
and ergodic optimization for the Poincar\'e map with respect to induced observables, which allow
us to reduce the problem for the context of maps. Using that singular-hyperbolic attractors are
approximated by hyperbolic sets, we obtain related results for geometric Lorenz attractors.
 \end{abstract}

\section{Introduction and statement of the main results}

Let $X$ be a compact metric space, $f:X\rightarrow X$ be a continuous map and $\mathcal{M}_{f}$ be the collection of $f$-invariant
Borel probability measures on $X$. The objects of interest in the field of ergodic optimization are those $f$-invariant probability measures which maximize, or minimize, the space average $\int \varphi d\mu$,
for $\varphi:X\to\mathbb{R}$, over all $\mu\in\mathcal{M}_{f}$. These are the maximizing measures, or minimizing, measures for the function $\varphi$ (with respect to the dynamical system $f$). As usual, we restrict our attention to maximizing measures, since a minimizing measure for $\varphi$ is a maximizing measure for $-\varphi$. The compactness of $\mathcal M_f$ and continuity of the
function $\mu\mapsto \int \varphi d\mu$ ensures that maximizing measures always exist. It is also clear from the ergodic decomposition
theorem that almost all ergodic components of a maximizing measure are maximizing measures, hence ergodic maximizing measures
also exist.
Hence, some of the fundamental question in ergodic optimization are:
\begin{enumerate}
\item[$\circ$] What can we say about the maximizing measures?
\item[$\circ$] Is there only one maximizing measure for typical observables?
\item[$\circ$] Can we describe the support of a maximizing measure?
\item[$\circ$] Are maximizing measures typically periodic?
\end{enumerate}
There exists an extensive list of contributions to these problems built over different approaches, some of which inspired by
statistical mechanics and thermodynamic formalism (zero temperature limits) and others from the theory of cohomology
equations (construction of sub-actions).
In the known situations, the answer to the previous questions usually depend on the
class of the dynamics but also on the regularity of the observables.
In~\cite{mane1997lagrangian} Ma\~n\'e  conjectured that for a generic Lagrangian
 there exists a unique minimizing measure, and it is supported by a periodic orbit.
Contreras, Lopes and Thieullen
\cite{contreras2001lyapunov} and later Contreras \cite{contreras2016ground} obtained a proof of this conjecture in the case
of expanding maps.  For an account on the many contributions to this problem we refer the reader to \cite{BLL, BiGa, BYiwei, bousch2000poisson,Bremont,fathi1997kam,garibaldi2009calibrated, Yiwei1,Yiwei2, morris2007, morris2010ergodic,quas2012super,yuan1999optimal}
and references therein.
Based on various approaches utilized in the literature, we can emphasize that the regularity of the observables plays an important role on the proofs: for Lipschitz potentials, one can obtain maximizing measures supported in periodic orbits, whereas for continuous potentials, the support of the maximizing measure is the whole space. We refer the reader to \cite{baraviera2013ergodic, jenkinson2006ergodic, jenkinson2018} for excellent surveys on ergodic optimization.

Here we will address on the ergodic optimization for hyperbolic and singular-hyperbolic flows with respect to both continuous and H\"older continuous observables.
Let $M$ be a closed Riemannian manifold and $(X^{t})_{t}\colon M\to M$
a smooth flow. Given a continuous function $\varphi:M\to\mathbb{R}$
a \emph{maximizing measure} for $(X^{t})_{t}$ with respect to $\varphi$ is a $(X^{t})_{t}$-invariant
Borel probability measure $\mu$ so that
\[
\int\varphi d\mu=\max\Big\{ \int\varphi \,d\nu\colon \nu\in\mathcal{M}_{1}(M,(X^{t})_{t})\Big\} .
\]
Maximizing measures always exist because $\mathcal{M}_{1}(M,(X^{t})_{t})$
is compact in the weak{*} topology and $\nu\mapsto\int\varphi d\mu$
is continuous.
First results on the ergodic optimization for flows were due to Lopes and Thieullen \cite{lopes2005sub} and Pollicott and Sharp~\cite{PS2} where the authors constructed sub-actions for Anosov flows (related results include \cite{LRR} in the context of
expansive geodesic flows).
The construction of calibrated sub-actions (that is, normalized by the maximal average) can be understood as a first step
in the direction of ergodic optimization as these can be used to identify the support of maximizing measures.
A second breakthrough was obtained by Contreras~\cite{contreras2014} in the context of Lagrangian dynamics,
which proves that $C^2$-generic hyperbolic Ma\~n\'e sets contain a periodic orbit and that it actually reduces to a single periodic
orbit in the case of surfaces.

The main goal here is to contribute to the ergodic optimization of singular-hyperbolic attractors in three-dimensional manifolds, where
the geometric Lorenz attractors form the paradigmatic examples. Although geometric Lorenz attractors admit a global cross-section, one cannot tackle this problem directly and to reduce their ergodic optimization to the ergodic optimization of their Poincar\'e maps.
Indeed,
the presence of singularities makes not only the roof function to be piecewise smooth and unbounded, as the
Poincar\'e return map is generally non-Markovian and just piecewise smooth with unbounded derivatives (cf.~\cite{araujo2010three}).
While one could expect the ideas in \cite{lopes2005sub} to be useful to construct calibrated sub-actions in the previous context
for a suitable (countable) Markov inducing scheme,  a complete ergodic optimization description seems
far from unattainable by this approach.

Our strategy to overcome the previous difficulties exploit the fact that singular-hyperbolic attractors can be
approximated by horseshoes (see \cite{araujo2010three,Young}). Indeed, a singular-hyperbolic set with
no singularities on a three-dimensional manifold is uniformly hyperbolic (see e.g. \cite{araujo2010three}).
On the one hand, the existence of Markov partitions for hyperbolic flows
\cite{Bowen, ratner1973markov} ensures that the suspended horseshoes can be modeled by
suspension flows over a subshift of finite type.
Then we prove that the ergodic optimization of hyperbolic flows can be reduced to the ergodic optimization of bilateral
subshifts of finite type (and later to one-sided subshifts of finite type) with respect to induced observables. We prove that
the previous reduction has a  fibered structure in the space of observables, namely that is formed by submersions in the space
of observables, and use the latter to
prove that results on the ergodic optimization for bilateral subshifts of finite type lead to a translation of such results for
suspended horseshoes (see Section~\ref{sec:reducao}).  In particular, adapting \cite{contreras2016ground,morris2010ergodic,Shinoda} to the context of topologically mixing bilateral subshifts of
finite type we prove that for each of these approximating suspended horseshoes: (i)
there exists a open and dense set of H\"older observable with a unique maximizing measure, supported on a periodic orbit;
(ii) there is a $G_\delta$-dense set of continuous observables with a unique maximizing measure, and it has zero entropy and
full support; and (iii) there exists a $C^0$-dense subset formed by observables that admit uncountable many ergodic
maximizing measures with positive entropy.
Related results for the singular-hyperbolic attractors are obtained by an approximation argument,
explored in Section~\ref{secLorenz}.

\medskip

Our main results can be grouped according to both the regularity and hyperbolicity of the
flow, and the regularity of the observables.

\subsubsection*{Hyperbolic flows}

Assume that $(X^t)_t$ is a $C^1$-flow and that $\Lambda$ is a hyperbolic basic set that is conjugated to a suspension flow
over a subshift of finite type (see ~Subsection~\ref{s:hyp} for the definitions). We prove that typical continuous observables
have unique and zero entropy maximizing measures. More precisely:

\begin{maintheorem} \label{TeoA:hyper-subset} Let $M$ be a $d$-dimensional
compact boundaryless Riemannian manifold and $(X^{t})_{t\in\mathbb{R}}$
be a $C^{1}$-flow in $M$. If $\Lambda\subset M$ is
a hyperbolic basic set for $(X^{t})_{t\in\mathbb{R}}$ that is conjugated to a suspension flow
over a subshift of finite type then the following hold:
\begin{enumerate}
\item there exists an open and dense
set $\mathcal{O}\subset C^{\alpha}(M,\mathbb{R})$ of observables
$\varphi:M\to\mathbb{R}$ such that, for every $\varphi\in\mathcal{O}$,
there is a unique $(X^{t})_t$-maximizing measure and it is supported on a periodic orbit;
\item there exists a dense $G_{\delta}$ set
$\mathcal Z\subset C^{0}(M,\mathbb{R})$ such that for every $\varphi\in \mathcal Z$,
there is a single $(X^{t})_{t\in\mathbb{R}}$-maximizing measure,
it has zero entropy and support equal to $\Lambda$; and
\item there exists a dense set
$\mathcal D \subset C^{0}(M,\mathbb{R})$ such that for every $\varphi\in \mathcal D$,
there exists uncountably many $(X^{t})_{t\in\mathbb{R}}$-invariant and ergodic
maximizing measures.
\end{enumerate}
\end{maintheorem}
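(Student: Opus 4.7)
The plan is to reduce the ergodic optimization problem on $\Lambda$ to one on the modeling subshift and then transfer each genericity statement via the induced-observable map.

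By hypothesis there is a subshift of finite type $(\Sigma,\sigma)$, a continuous roof function $r\colon\Sigma\to(0,\infty)$ bounded away from $0$ and $\infty$, and a conjugacy between the suspension flow $(\sigma_r^t)_t$ on $\Sigma_r=\{(x,s):0\le s<r(x)\}/\!\sim$ and $(X^t|_\Lambda)_t$. Every $(X^t)$-invariant measure $\mu$ supported on $\Lambda$ corresponds to a $\sigma$-invariant measure $\nu$ on $\Sigma$ via the Kac-type identity
\[
\int \varphi\, d\mu \;=\; \frac{\int \Delta_r\varphi\, d\nu}{\int r\, d\nu}, \qquad \Delta_r\varphi(x)=\int_0^{r(x)} \varphi(X^s(x))\, ds.
\]
The reduction developed in Section~\ref{sec:reducao} will show that $\mu$ is $(X^t)$-maximizing for $\varphi$ with maximum $\beta(\varphi)$ if and only if $\nu$ is $\sigma$-maximizing for the corrected observable $\Delta_r\varphi-\beta(\varphi)r$ (with maximal integral zero), and that the linear operator $\varphi\mapsto\Delta_r\varphi$ is a submersion on both $C^0$ and $C^\alpha$ spaces. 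Hence the correspondence between maximizing sets has a fibered structure, so generic subsets on the symbolic side pull back to generic subsets of observables on $\Lambda$.

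Next, I would establish the three statements at the symbolic level. The modeling subshift is a two-sided topologically mixing subshift of finite type, and through the classical Sinai--Bowen cohomological normalization---every H\"older function on $\Sigma$ is cohomologous to one depending only on future coordinates---ergodic optimization reduces to the one-sided shift $\Sigma^+$, where the results of Contreras \cite{contreras2016ground}, Morris \cite{morris2010ergodic} and Shinoda \cite{Shinoda} apply. This yields: (i) an open and dense $\mathcal{O}'\subset C^\alpha(\Sigma,\mathbb{R})$ of observables with a unique, periodic maximizing measure; (ii) a $G_\delta$-dense $\mathcal{Z}'\subset C^0(\Sigma,\mathbb{R})$ of observables with a unique, zero-entropy, full-support maximizing measure; and (iii) a $C^0$-dense $\mathcal{D}'\subset C^0(\Sigma,\mathbb{R})$ of observables admitting uncountably many ergodic maximizing measures.

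Finally, pull these sets back to the flow. Because $\varphi\mapsto\Delta_r\varphi-\beta(\varphi)r$ is continuous with linear leading part a submersion (cf.\ Section~\ref{sec:reducao}), the preimages of $\mathcal{O}'$, $\mathcal{Z}'$ and $\mathcal{D}'$ are open and dense in $C^\alpha(\Lambda,\mathbb{R})$, $G_\delta$-dense in $C^0(\Lambda,\mathbb{R})$, and dense in $C^0(\Lambda,\mathbb{R})$, respectively. Extending observables from $\Lambda$ to $M$ (Tietze in the $C^0$ case, Whitney--McShane-type extension in the H\"older case) yields the desired sets $\mathcal{O},\mathcal{Z},\mathcal{D}$ in the ambient spaces. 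The main obstacle is that $\beta(\varphi)$ depends nonlinearly on $\varphi$, so one must verify that this nonlinear perturbation of a submersion still preserves Baire category on the relevant function spaces: for $C^\alpha$ this is reasonably direct since $\beta$ is Lipschitz in $\varphi$, but in $C^0$ the openness of the underlying linear reduction is indispensable. Once that machinery from Section~\ref{sec:reducao} is in place, the proof of Theorem~A is a direct pullback of the symbolic results.
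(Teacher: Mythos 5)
Your overall strategy — conjugate to a suspension flow over a subshift, push the optimization down to the base map via the Kac-type formula, establish the three genericity statements at the symbolic level, and pull back along the submersion $\Phi\mapsto\Delta_r\Phi$ with a level-set argument to handle the nonlinear shift by $\beta(\varphi)r$ — is exactly what the paper does (this is Lemmas~\ref{eq:intphi=00003Dintphibar/intr},~\ref{le:red1}, Proposition~\ref{thm:susp-flow} and Theorem~\ref{thm:semi-flow}). You also correctly flag the nonlinearity of $\varphi\mapsto M(\varphi)$ as the point requiring care; the paper isolates this into the decomposition $C^{\alpha}=\bigcup_k C_k$ and the Claims proving $\pi_0$ and $\Pi_0$ carry open/dense sets to open/dense sets.

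However, there is a genuine gap in the middle step. You propose to reduce the \emph{bilateral} shift $\Sigma$ to the \emph{one-sided} shift $\Sigma^+$ once and for all via the Sinai--Bowen cohomological normalization, and then invoke Contreras, Morris and Shinoda on $\Sigma^+$. This works for item (1) because the cohomological equation $\varphi=\psi+u\circ\sigma-u$ with $\psi$ depending only on future coordinates admits a \emph{continuous} (indeed H\"older) solution $u$ when $\varphi$ is H\"older — that is Lemma~\ref{lemBowen1.6homology}. But for items (2) and (3) you are working in $C^0$, and there is no continuous solution $u$ for a general continuous $\varphi$: the Sinai--Bowen argument needs a summable variation. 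So the claim ``ergodic optimization reduces to the one-sided shift'' simply does not apply to $C^0(\Sigma,\mathbb R)$. For item (2) the paper avoids this entirely: Morris's result (our Theorem~\ref{morris}) is a consequence of the (periodic) gluing orbit property, which a transitive \emph{two-sided} subshift of finite type already satisfies, so it is applied directly on $\Sigma$, with no reduction. For item (3) one can still use Shinoda on $\Sigma^+$, but the correct way to lift density to $C^0(\Sigma)$ is: approximate the given $\Phi\in C^0(\Sigma)$ first by a H\"older observable, cohomologically reduce that H\"older observable, approximate the reduced observable by one of Shinoda's, and then add back the same (continuous) coboundary — a two-step approximation, not a direct reduction of $\Phi$. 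If you repair (2) by applying the gluing-orbit/specification argument on the bilateral shift directly, and repair (3) with the H\"older-intermediate approximation, the rest of your pullback scheme goes through as in the paper.
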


Since hyperbolic flows admit Markov partitions, these may be modeled by suspension flows.
We observe that Theorem~\ref{TeoA:hyper-subset} will follow from a more general result
on suspension flows (cf. Theorem~\ref{thm:semi-flow}).

\subsubsection*{Lorenz attractors}
Our next results concern wild Lorenz attractors
(we refer the reader to Subsection~\ref{s:shyp} for the definition).
We prove the following.

\begin{maintheorem}\label{TeoALorenz} Let $M$ be a 3-dimensional compact
boundaryless Riemannian manifold and $\Lambda$ be a wild Lorenz attractor
for a flow $(X^{t})_{t}:M\to M$. Then:
\begin{enumerate}
\item there exists a $C^{0}$-residual subset $\mathcal R_1\subset C^{0}(M,\mathbb{R})$
such that for every $\varphi\in \mathcal R_1$ there is an unique $(X^{t})_{t}$-maximizing
measure $\mu$ with respect to $\varphi$; moreover,
$\mu$ is not atomic and the support $\supp \mu$ contains the singularity;
\item there is a $C^{\al}$-residual subset $\mathcal R_2\subset C^{\alpha}(M,\mathbb{R})$
of $\alpha$-H\"older observables such that, for every $\varphi\in \mathfrak R_2$
there is an unique $(X^{t})_{t}$-maximizing measure $\mu$;
moreover, either $\mu$ is supported on a critical element (a singularity or a periodic orbit)
or it is non-atomic whose support contains some singularity.
\end{enumerate}
\end{maintheorem}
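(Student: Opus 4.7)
The plan is to deduce the theorem from Theorem~\ref{TeoA:hyper-subset} by an approximation argument. Since a singular-hyperbolic set with no singularities in a three-dimensional manifold is uniformly hyperbolic, and wild Lorenz attractors are accumulated by hyperbolic basic sets, we choose a nested sequence $\Lambda_1\subset\Lambda_2\subset\cdots\subset\Lambda\setminus\{\sigma\}$ of hyperbolic basic sets, each conjugated to a suspension flow over a subshift of finite type, with $\bigcup_n\Lambda_n$ dense in $\Lambda\setminus\{\sigma\}$ and $\lim_n \htop(\Lambda_n)=\htop(\Lambda)$. The key structural input, to be supplied in Section~\ref{secLorenz}, is that the weak-$*$ closure of $\bigcup_n \mM_1(\Lambda_n,(X^t)_t)\cup\{\delta_\sigma\}$ coincides with $\mM_1(\Lambda,(X^t)_t)$.

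For each $n$, Theorem~\ref{TeoA:hyper-subset} applied to $\Lambda_n$ furnishes a residual set $\mathcal{Z}_n\subset C^0(M,\mathbb{R})$ and an open-dense set $\mathcal{O}_n\subset C^\alpha(M,\mathbb{R})$ of observables whose $(X^t)_t$-maximizing measure within $\Lambda_n$ is unique; in the H\"older case this measure is periodic, while in the continuous case it has zero entropy and support equal to $\Lambda_n$. Set
\[
m(\varphi):=\sup_{\mu\in\mM_1(\Lambda,(X^t)_t)}\int\varphi\,d\mu, \qquad m_n(\varphi):=\sup_{\mu\in\mM_1(\Lambda_n,(X^t)_t)}\int\varphi\,d\mu.
\]
The density statement above gives $m(\varphi)=\max\bigl(\sup_n m_n(\varphi),\varphi(\sigma)\bigr)$ for every continuous $\varphi$. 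Taking $\mathcal{R}_2:=\bigcap_n \mathcal{O}_n$ intersected with a further residual subset along which the quantities $\{m_n(\varphi)\}_n$ and $\varphi(\sigma)$ are pairwise distinct (this residuality follows from small H\"older perturbations supported near a chosen reference point), the maximum on the right is uniquely realized either by some $m_{n_0}(\varphi)$, by $\varphi(\sigma)$, or by neither.

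For $\varphi\in\mathcal{R}_2$ this trichotomy yields part~(2) of the theorem: (i) if $m(\varphi)=m_{n_0}(\varphi)$ strictly dominates every other $m_n(\varphi)$ and $\varphi(\sigma)$, any maximizer must be supported in $\Lambda_{n_0}$ and hence coincides with the unique periodic measure given by Theorem~\ref{TeoA:hyper-subset}; (ii) if $\varphi(\sigma)$ strictly dominates $\sup_n m_n(\varphi)$, then $\delta_\sigma$ is the unique maximizer and it is supported on the critical element $\sigma$; (iii) otherwise any maximizer $\mu$ is a weak-$*$ limit of measures $\mu_k\in\mM_1(\Lambda_{n_k},(X^t)_t)$ with $n_k\to\infty$, and the only way for $\int\varphi\,d\mu_k$ to exceed every $m_n(\varphi)$ is for the supports of $\mu_k$ to accumulate near $\sigma$, forcing $\sigma\in\supp\mu$ and $\mu\neq\delta_\sigma$, hence non-atomic. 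Uniqueness in this case reflects the generic uniqueness of maximizers on the compact convex set $\mM_1(\Lambda,(X^t)_t)$. The continuous case~(1) proceeds analogously with $\mathcal{R}_1:=\bigcap_n\mathcal{Z}_n$, together with a generic condition $\varphi(\sigma)<m(\varphi)$ excluding alternative~(ii) and thereby producing a non-atomic maximizer whose support contains~$\sigma$.

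The main obstacle is reconciling Baire genericity on each approximating horseshoe with Baire genericity on the whole attractor: residuality on each $\Lambda_n$ must combine into a residual subset governing behavior on $\Lambda$, and one must rule out exotic maximizers that escape all the approximating structure. Concretely, the delicate step is proving that for $C^0$-generic $\varphi$ any maximizer is either captured by some $\Lambda_{n_0}$ or else has $\sigma$ in its support, and controlling the uniqueness in the ``non-attained'' regime~(iii) using only the approximation by horseshoes rather than a direct spectral-gap type argument on $\Lambda$.
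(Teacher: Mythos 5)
Your overall strategy --- approximating the Lorenz attractor by an increasing family of hyperbolic basic sets avoiding the singularity, invoking Theorem~\ref{TeoA:hyper-subset} on each of them, and piecing together maximizers on $\Lambda$ from maximizers on the approximating sets --- is indeed the approach the paper takes (it uses the one-parameter family $\Lambda_{\cU_\vep}$ of maximal invariant sets outside $\vep$-neighborhoods of the singularity rather than a countable nested sequence, but this is cosmetic). However, there are two genuine gaps.

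First, you nowhere explain why the condition $\varphi(\sigma)<m(\varphi)$ holds on a $C^0$-residual set, and this is in fact the heart of item~(1) and the only place the \emph{wildness} hypothesis is used. The difficulty is this: to make $\delta_\sigma$ no longer maximizing by a small $C^0$ perturbation of $\varphi$ near $\sigma$, you must already have another invariant measure, supported away from $\sigma$, whose $\varphi$-integral is close to the maximum. If $\delta_\sigma$ were isolated in $\mM_1(\Lambda,(X^t)_t)$ there would be a $C^0$-\emph{open} set of observables for which $\delta_\sigma$ is the strict maximizer, and no generic argument could dislodge it. The paper supplies the missing ingredient in Lemma~\ref{le:mper}: for \emph{wild} Lorenz maps one can produce periodic orbits spending an arbitrarily large proportion of their period near $\sigma$ (because the return time is comparable to $\log|\alpha'|$ and the supremum of $\int\log|\alpha'|\,d\mu$ is infinite), so that $\delta_\sigma$ is a weak-$*$ limit of periodic measures. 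Lemma~\ref{le:nonsingularm} then uses exactly this density to perform the $C^0$-perturbation. Your proposal asserts that $\overline{\bigcup_n\mM_1(\Lambda_n,(X^t)_t)\cup\{\delta_\sigma\}}=\mM_1(\Lambda,(X^t)_t)$, but that identity by itself leaves $\delta_\sigma$ as a potentially isolated point and does not yield the generic inequality you rely on; you need the stronger fact that $\delta_\sigma$ lies in the closure of $\bigcup_n\mM_1(\Lambda_n,(X^t)_t)$, which is precisely the wildness-dependent Lemma~\ref{le:mper}.

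Second, the clause asking that $\{m_n(\varphi)\}_n$ and $\varphi(\sigma)$ be \emph{pairwise distinct} for a residual $\varphi$ is both false and unnecessary. Because the sets are nested, $\Lambda_n\subset\Lambda_{n+1}$, whenever the unique maximizer on $\Lambda_{n+1}$ happens to already lie in $\Lambda_n$ one automatically has $m_n(\varphi)=m_{n+1}(\varphi)$; such ties occur on an open set of observables (namely whenever the maximizing periodic orbit is shared), so the ``pairwise distinct'' condition cannot be residual. The paper's argument (Lemma~\ref{le:ntom}) never invokes infinitely many separations simultaneously: in the problematic case it takes \emph{two} nested hyperbolic sets $\Lambda_{\cU_{\vep_2}}\subsetneq\Lambda_{\cU_{\vep_1}}$ for which the constrained maximum still equals $M(\Phi)$, applies item~(2) of Theorem~\ref{TeoA:hyper-subset} on each to get unique maximizers with \emph{full support} in $\Lambda_{\cU_{\vep_1}}$ and $\Lambda_{\cU_{\vep_2}}$ respectively, and derives a contradiction from uniqueness on $\Lambda$ forcing these two fully supported measures to coincide while their supports differ. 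That argument uses the full-support conclusion of Theorem~\ref{TeoA:hyper-subset} in an essential way, which your sketch does not exploit. Finally, the claim ``uniqueness reflects the generic uniqueness of maximizers on $\mM_1(\Lambda,(X^t)_t)$'' should be made explicit by intersecting with the Jenkinson-type residual set (Theorem~\ref{Jenkinson}, Remarks~\ref{rmk:Holder},~\ref{rmk:thm-unique-flows}); otherwise this reads as circular.
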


One expects the previous result to hold for singular-hyperbolic attractors in general
(see Subsection~\ref{s:hyp} for the definition). However, the argument in proof of Theorem~\ref{TeoALorenz}
explores transitivity of locally maximal subsets and a characterization of the space of invariant measures
for the wild Lorenz attractors (see e.g. Lemma~\ref{le:mper}).
More precisely, the conclusion of Theorem~\ref{TeoALorenz} holds for all Lorenz attractors so that the set of
periodic measures is dense in the convex space of invariant probabilities, a condition which holds
for wild Lorenz attractors.

\medskip

One other comment concerns invariant measures whose support contains the singularity at Theorem~\ref{TeoALorenz}.
In general we cannot ensure that these measures are full supported on the attractor.
Nevertheless, in the case of $C^1$-generic vector fields, we prove that every three-dimensional singular-hyperbolic attractor
(including the Lorenz attractor) coincides with
the closure of the unstable manifold of its singularities (cf. Proposition~\ref{MP}).
Then, if we endow the space $\mathfrak{X}^1(M) \times C^0(M,\mathbb R)$ with the product topology we have
the following consequence:

\begin{maincorollary}\label{cor:generic}
Let $M$ be a 3-dimensional compact boundaryless Riemannian manifold.
There exists a Baire residual subset $\mathcal R \subset \mathfrak{X}^1(M)\times C^0(M,\mathbb R)$
such that for any pair $(X,\varphi) \in \mathcal R$ there exists a unique $(X^{t})_{t}$-maximizing
measure $\mu$ with respect to $\varphi$ on each Lorenz attractor
$\Lambda$ for the flow $(X^t)_t$ generated by $X$.
Moreover, $\supp \mu=\Lambda$.
\end{maincorollary}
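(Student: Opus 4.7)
The plan is to combine Proposition~\ref{MP} with Theorem~\ref{TeoALorenz}(1), and to upgrade the weak support information ``$\sigma\in\supp\mu$'' to ``$\supp\mu=\Lambda$'' via a hyperbolic-return argument at the Lorenz singularity.

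First I would fix a $C^{1}$-residual subset $\mathcal{R}_{0}\subset\mathfrak{X}^{1}(M)$ on which Proposition~\ref{MP} holds, intersected if necessary with a further $C^{1}$-residual set so that every Lorenz attractor of $X\in\mathcal{R}_{0}$ is wild (making Theorem~\ref{TeoALorenz} applicable). Then for every $X\in\mathcal{R}_{0}$ and every Lorenz attractor $\Lambda$ of $(X^{t})_{t}$ we have $\Lambda=\overline{W^{u}(\sigma)}$, where $\sigma$ is the singularity inside $\Lambda$, and Theorem~\ref{TeoALorenz}(1) provides a $C^{0}$-residual subset $\mathcal{R}_{1}(X)\subset C^{0}(M,\mathbb{R})$ of observables for which the maximizing measure $\mu_{\varphi}$ on $\Lambda$ is unique, non-atomic, and satisfies $\sigma\in\supp\mu_{\varphi}$.

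The core step is to prove $\supp\mu_{\varphi}=\Lambda$ for such a pair $(X,\varphi)$. Uniqueness forces $\mu_{\varphi}$ to be ergodic (any ergodic component would also be maximizing), so Birkhoff's theorem gives that $\mu_{\varphi}$-a.e.\ $x$ has orbit dense in $\supp\mu_{\varphi}$. Non-atomicity combined with Poincar\'e recurrence yields $\mu_{\varphi}(W^{s}(\sigma))=0$, since every point of $W^{s}(\sigma)\setminus\{\sigma\}$ has a non-recurrent orbit. Because $\sigma\in\supp\mu_{\varphi}$ the orbit of $\mu_{\varphi}$-a.e.\ $x$ enters every small neighborhood $U$ of $\sigma$ along a sequence of times $t_{n}$ with $X^{t_{n}}(x)\to\sigma$, and these entry points lie outside $W^{s}_{\mathrm{loc}}(\sigma)$. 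The local linearization at the hyperbolic fixed point $\sigma$ then forces such orbits to exit $U$ arbitrarily close to the two intersection points $p_{\pm}=W^{u}_{\mathrm{loc}}(\sigma)\cap\partial U$, so that $\supp\mu_{\varphi}$ contains at least one of $p_{\pm}$ and, by flow invariance, a full branch of $W^{u}(\sigma)$. The closure of this branch equals $\Lambda$ (a consequence of the transitivity of the Lorenz attractor together with $\Lambda=\overline{W^{u}(\sigma)}$), yielding $\Lambda\subset\supp\mu_{\varphi}\subset\Lambda$.

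To assemble these fiberwise statements into a residual subset of the product, I would set
\[
\mathcal{R}=\{(X,\varphi)\in\mathfrak{X}^{1}(M)\times C^{0}(M,\mathbb{R}):X\in\mathcal{R}_{0}\text{ and }\varphi\in\mathcal{R}_{1}(X)\},
\]
and verify, via a Kuratowski-Ulam argument, that $\mathcal{R}$ is residual in the product: it projects onto the $C^{1}$-residual base $\mathcal{R}_{0}$ with $C^{0}$-residual vertical fibers, and the conditions defining membership in $\mathcal{R}_{1}(X)$ can be written as a countable intersection of sets jointly open in the product topology. The main obstacle is the upgrade step, whose key difficulty is ruling out the local stable manifold $W^{s}_{\mathrm{loc}}(\sigma)$ from the orbit of a $\mu_{\varphi}$-generic point; this is exactly where the non-atomicity of $\mu_{\varphi}$ given by Theorem~\ref{TeoALorenz}(1) and the $C^{1}$-generic identification $\Lambda=\overline{W^{u}(\sigma)}$ given by Proposition~\ref{MP} play an essential role.
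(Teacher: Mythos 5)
Your approach matches the paper's proof: take the $C^1$-residual set of Proposition~\ref{MP}, apply item~(1) of Theorem~\ref{TeoALorenz} fiberwise to get a unique non-atomic maximizing measure with $\sigma\in\supp\mu$, and upgrade to $\supp\mu=\Lambda$ by showing $\mu$-generic orbits shadow $W^u(\sigma)$ near the singularity so that $\overline{W^{u,\pm}(\sigma)}=\Lambda$ forces the conclusion. Where the paper condenses the last step to ``for $\mu$-a.e.\ $x$ there exists $y\in W^u(\sigma)\setminus\{\sigma\}$ with $y\in\omega(x)$; use continuous dependence on initial conditions,'' you unpack the same hyperbolic entry/exit argument at $\sigma$ (including the observation $\mu(W^s(\sigma))=0$), and you assemble the product residual set exactly as the paper does, via $\bigcup_{X}\{X\}\times\mathcal R_X$.
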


Some comments are in order. We could not rule out the possibility of having maximizing measures
that are not supported at critical elements in item (2) of Theorem~\ref{TeoALorenz}.
Furthermore, we observe that the condition that the support contains the singularity implies on a non-trivial
recurrence to the singularity which can be thought as a replacement in this context to the fact that these measures
are expected to have large support.

While this work was being written it was brought to our attention the results by Huang et al \cite{Yiwei2}, which
establish ergodic optimization for Axiom A flows using very different methods and establish a continuous-time
Ma\~n\'e-Conze-Guivarch-Bousch lemma.

\medskip
This paper is organized as follows. In Section~\ref{prelim} we give some preliminaries on ergodic optimization in the
discrete time setting, suspension flows and weak forms of hyperbolicity for flows. In Section~\ref{sec:bilateral} we prove
some results on the ergodic optimization for bilateral subshifts of finite type. The method explores a functional analytic
description of the reduction using the solutions of the cohomological equation. A method for recovering results on the ergodic
optimization for suspension semiflows from their counterpart for the Poincar\'e map is developed along Section~\ref{sec:reducao},
where we also prove Theorem~\ref{TeoA:hyper-subset}. In Section~\ref{secLorenz} we use that
Lorenz attractors
are approximated by horseshoes in order to characterize the space of invariant probabilities for Lorenz attractors and to
prove Theorem~\ref{TeoALorenz}. Finally, some final comments are addressed in Section~\ref{sec:final}.


\section{Preliminaries}\label{prelim}

\subsection{Ergodic optimization for maps }\label{LE}

In this subsection we recall some contributions for the ergodic optimization of maps.
If $N$ is a compact metric space, $f:N\to N$ is a continuous map
and $\psi:N\to\mathbb{R}$ is continuous, a \emph{maximizing measure} for $f$ with respect to $\psi$
is an $f$-invariant Borel probability measure $\bar{\mu}$ which maximizes the integral of $\psi$ among all $f$-invariant
Borel probabilities. In other words,
\[
\int\psi~ d\bar{\mu}=\max\Big\{ \int\psi ~d\bar{\nu}\colon\bar{\nu}\in\mathcal{M}_{1}(N,f)\Big\} .
\]
We denote $M(\psi,f)=\max\Big\{ \int\psi \,d\bar{\nu}\colon \bar{\nu}\in\mathcal{M}_{1}(N,f)\Big\} $.
As discussed in the introduction there is a dichotomy depending on the regularity of the
observables and structure of the underlying dynamics. The first results consider continuous maps
with Bowen's specification property (see e.g. \cite{BTV} for definition and a further discussion).

\begin{theorem} \cite[Theorem 3.2]{jenkinson2006ergodic}\label{Jenkinson}
Let $f: N \to N$ be a continuous map on a compact metric space $N$, and let
$E$ be a topological vector space which is densely and
continuously embedded in $C^0(N,\mathbb R)$. Then the set of observables
$\varphi\in E$ that have a unique maximizing measure is a countable intersection of open and dense subsets of $E$.
In particular, if $E$ is a Baire space then the set above is dense in $E$.
\end{theorem}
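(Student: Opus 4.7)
The plan is to reduce the statement to a differentiability property of a convex function. Let $\beta\colon E\to\mathbb{R}$ be defined by $\beta(\varphi)=M(\varphi,f)=\sup_{\mu\in\mathcal{M}_{1}(N,f)}\int\varphi\,d\mu$. Since this is a supremum of linear functionals, $\beta$ is convex, and the estimate $|\beta(\varphi)-\beta(\psi)|\le\|\varphi-\psi\|_{\infty}$ together with the continuity of the embedding $E\hookrightarrow C^{0}(N,\mathbb{R})$ shows that $\beta$ is continuous on $E$. For each direction $\psi\in E$, convexity of $t\mapsto\beta(\varphi+t\psi)$ guarantees the existence of the one-sided derivatives $\beta'_{\pm}(\varphi;\psi)$.

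The first key step is the identification
\[
\beta'_{+}(\varphi;\psi)=\max_{\mu\in\mathcal{M}_{\max}(\varphi)}\int\psi\,d\mu,\qquad \beta'_{-}(\varphi;\psi)=\min_{\mu\in\mathcal{M}_{\max}(\varphi)}\int\psi\,d\mu,
\]
where $\mathcal{M}_{\max}(\varphi)$ is the (compact, convex) set of maximizing measures of $\varphi$. The $\ge$ bound follows by testing $\beta(\varphi+t\psi)$ against elements of $\mathcal{M}_{\max}(\varphi)$; the reverse bound uses compactness of $\mathcal{M}_{1}(N,f)$ and upper semicontinuity $t\mapsto\mathcal{M}_{\max}(\varphi+t\psi)$ to extract weak-$*$ accumulation points inside $\mathcal{M}_{\max}(\varphi)$. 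Consequently, $\varphi$ has a unique maximizing measure if and only if $\beta'_{+}(\varphi;\psi)=\beta'_{-}(\varphi;\psi)$ for every $\psi$ in a set whose image in $C^{0}(N,\mathbb{R})$ is dense.

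Since $N$ is a compact metric space, $C^{0}(N,\mathbb{R})$ is separable, and the density of the embedding $E\hookrightarrow C^{0}(N,\mathbb{R})$ lets us choose $\{\psi_{n}\}_{n\ge 1}\subset E$ whose image is $\|\cdot\|_{\infty}$-dense. Define
\[
U_{n}=\bigl\{\varphi\in E\colon\beta'_{+}(\varphi;\psi_{n})=\beta'_{-}(\varphi;\psi_{n})\bigr\},
\]
so that $\{\varphi\in E\colon\varphi\text{ has a unique maximizing measure}\}=\bigcap_{n}U_{n}$. I then show each $U_{n}$ is a dense $G_{\delta}$ in $E$. For the $G_{\delta}$ property, monotone difference quotients of a convex function yield that $\varphi\mapsto\beta'_{+}(\varphi;\psi_{n})$ is upper semicontinuous (as an infimum of continuous functions over $t>0$) and $\varphi\mapsto\beta'_{-}(\varphi;\psi_{n})$ is lower semicontinuous; hence $U_{n}=\bigcap_{k}\{\beta'_{+}-\beta'_{-}<1/k\}$ is $G_{\delta}$. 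For density, given $\varphi_{0}\in E$ and an $E$-neighborhood $V$ of $\varphi_{0}$, the continuity of $t\mapsto\varphi_{0}+t\psi_{n}$ places $\varphi_{0}+t\psi_{n}\in V$ for all sufficiently small $|t|$, while the convex function $t\mapsto\beta(\varphi_{0}+t\psi_{n})$ on $\mathbb{R}$ is differentiable at a.e.\ $t$; any such $t$ provides $\varphi_{0}+t\psi_{n}\in U_{n}\cap V$.

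Writing each $U_{n}=\bigcap_{k}V_{n,k}$ with $V_{n,k}$ open, density of $U_{n}$ forces each $V_{n,k}$ to be open and dense, so $\bigcap_{n}U_{n}=\bigcap_{n,k}V_{n,k}$ is a countable intersection of open dense sets. If $E$ is Baire, this intersection is dense, as claimed. The main obstacle is arranging the perturbation directions $\psi_{n}$ to lie in $E$ itself (so that affine moves stay inside $E$) while keeping their images dense in $C^{0}(N,\mathbb{R})$; this is precisely what the hypothesis of dense continuous embedding delivers. Establishing the subdifferential identification of $\beta'_{\pm}$ via weak-$*$ compactness is the other delicate step, but is standard once convexity is in place.
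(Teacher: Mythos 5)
Your proposal is correct and reproduces, essentially verbatim, the standard argument behind Jenkinson's Theorem~3.2: express uniqueness of the maximizing measure via agreement of the one-sided directional derivatives of the convex functional $\beta(\varphi)=M(\varphi,f)$ along a countable family of directions whose images are dense in $C^{0}(N,\mathbb{R})$, obtain openness from semicontinuity of the monotone difference quotients, and obtain density from almost-everywhere differentiability of convex functions on $\mathbb{R}$. The paper cites this result without proof, and your argument matches the cited source's approach.
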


\begin{remark}\label{rmk:Holder}
Given $\alpha>0$, the previous theorem ensures that there exist Baire residual subsets in $C^0(N,\mathbb R)$ and in
$C^\alpha(N,\mathbb R)$ formed by observables with a unique maximizing measure.
\end{remark}

\begin{remark}\label{rmk:thm-unique-flows}
Theorem~\ref{Jenkinson} admits a counterpart to continuous flows. Indeed, the argument in its proof
relies on convergences both on the space of measures (in the weak$^*$ topology) and the space of
(continuous) potentials, and it does not depend on the discrete-time or continuous-time nature of the dynamics itself. Thus,
the set of observables having a unique maximizing measure (invariant by a continuous flow) forms a Baire generic subset on both the spaces of continuous and H\"older continuous observables.
\end{remark}

By the previous discussion, for any fixed continuous flow $(X^t)_t$, typical observables have a unique maximizing measure.
We would like to say more about these measures (e.g. to characterize the support of these measures). Before recalling such kind of
results for maps we need a definition.

\begin{definition}\label{def:gluing1}
We say that $f$ satisfies the \emph{gluing orbit property} if for any $\vep > 0$ there exists an integer $m = m(\vep) \geq 1$ so that for any $x_1^{}, x_2^{}, \dots , x_k^{} \in N$ and any integers $n_1^{}, \dots , n_k^{} \ge 0$ there are $0 \leq p_1^{}, \dots , p_{k-1}^{} \leq m(\vep)$ and a
point $y \in N$ so that $\displaystyle d(f^j (y), f^j (x_1^{}))\leq \vep$ for every $0 \leq j \leq n_1^{}$ and
$
d(f^{j+n_1^{}+p_1^{}+ \dots +n_{i-1}^{}+p_{i-1}^{}} (y), f^j (x_i^{}))\leq \vep
$
for every $2 \leq i \leq k$ and $0 \leq j \leq n_i^{}$.
If, in addition, $y\in N$ can be chosen periodic with period $\sum_{i=1}^k (n_i+p_i)$
for some $0 \leq p_{k}^{} \leq m(\vep)$ then we say that $f$ satisfies the \emph{periodic gluing orbit property}.

\end{definition}

The latter is a condition weaker than Bowen's specification and it is satisfied by transitive hyperbolic dynamics,
and minimal equicontinuous maps, among other class of examples (cf. ~\cite{BV,BTV} and references therein).

\begin{theorem} \label{morris}
Let $f:N\to N$ be a continuous transformation of a compact metric space satisfying the periodic gluing orbit property.
Then there is a dense $G_{\delta}$ set $Z\subset C^{0}(N,\mathbb{R})$ such that every $\varphi \in Z$ has a unique $\varphi$-maximizing measure, it has full support in $N$
and zero entropy.
\end{theorem}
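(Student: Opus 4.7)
The plan is to realize $Z$ as the intersection of three dense $G_\delta$ subsets of $C^0(N,\mathbb{R})$ encoding uniqueness, full support, and zero entropy of the maximizing measure. The uniqueness component is immediate from Theorem~\ref{Jenkinson}: since $C^0(N,\mathbb{R})$ is a Baire space, the subset $\mathcal{U}\subset C^0(N,\mathbb{R})$ of observables with a unique maximizing measure is a dense $G_\delta$.

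For the remaining properties I would first analyze $\mathcal{M}_1(N,f)$ in the weak-$*$ topology. The periodic gluing orbit property yields density of periodic measures in $\mathcal{M}_1(N,f)$ (cf.~\cite{BV,BTV}). Combining this with a shadowing and convex-combination argument built from the definition of gluing orbit, I would show that
\[
\mathcal{R}=\{\mu\in\mathcal{M}_1(N,f)\colon\supp\mu=N\text{ and }h_\mu(f)=0\}
\]
is a dense $G_\delta$ in $\mathcal{M}_1(N,f)$. Indeed, $\{\mu:\supp\mu=N\}$ equals $\bigcap_i\{\mu:\mu(U_i)>0\}$ over a countable basis $\{U_i\}$ of $N$, and $\{\mu:h_\mu(f)=0\}$ is $G_\delta$ by a Sigmund-type argument that approximates any invariant measure by convex combinations of long periodic orbits of vanishing entropy.

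The technical heart of the proof is the following lifting statement: for every closed nowhere dense $\mathcal{K}\subset\mathcal{M}_1(N,f)$, the set
\[
O(\mathcal{K})=\{\varphi\in C^0(N,\mathbb{R})\colon\mathcal{M}^*(\varphi)\cap\mathcal{K}=\emptyset\}
\]
is open and dense in $C^0(N,\mathbb{R})$, where $\mathcal{M}^*(\varphi)$ denotes the compact convex set of $\varphi$-maximizing measures. Openness follows from the upper semi-continuity of $\varphi\mapsto\mathcal{M}^*(\varphi)$: if $\varphi_n\to\varphi$ in $C^0$ and $\mu_n\in\mathcal{M}^*(\varphi_n)\cap\mathcal{K}$, then any weak-$*$ accumulation point of $(\mu_n)$ lies in $\mathcal{M}^*(\varphi)\cap\mathcal{K}$, using continuity of $\varphi\mapsto M(\varphi,f)$ together with closedness of $\mathcal{K}$. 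Density is where I expect the bulk of the work: given $\varphi_0$ and $\varepsilon>0$, I would pick a weak-$*$ neighborhood $V$ of a periodic measure $\delta_{\mathcal{O}}\notin\mathcal{K}$ (whose existence is guaranteed by the periodic gluing orbit property) with $\bar V\cap\mathcal{K}=\emptyset$, and construct a continuous perturbation $\psi$ with $\|\psi\|_\infty<\varepsilon$ such that $\mathcal{M}^*(\varphi_0+\psi)\subset V$. The construction combines the convex-analytic identification of the subdifferential of $\varphi\mapsto M(\varphi,f)$ with $\mathcal{M}^*(\varphi)$, a separation argument via the continuous test functions that define the weak-$*$ topology on $\mathcal{M}_1(N,f)$, and a coboundary-type trick in the spirit of \cite{morris2010ergodic} that is needed to keep $\|\psi\|_\infty$ below $\varepsilon$ while effectively steering the maximizing measures into $V$.

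Writing $\mathcal{M}_1(N,f)\setminus\mathcal{R}=\bigcup_{k\ge 1}\mathcal{K}_k$ with each $\mathcal{K}_k$ closed and nowhere dense, and applying the lifting statement to each $\mathcal{K}_k$, the intersection
\[
Z=\mathcal{U}\cap\bigcap_{k\ge 1}O(\mathcal{K}_k)
\]
is a dense $G_\delta$ in $C^0(N,\mathbb{R})$. For every $\varphi\in Z$ the unique maximizing measure belongs to $\mathcal{R}$, hence has full support in $N$ and zero entropy. The main obstacle in the whole argument is the density step of the lifting statement, since small sup-norm perturbations move maximizing measures only slightly by upper semi-continuity, so cohomology adjustments are essential to effectively relocate them into prescribed weak-$*$ open sets.
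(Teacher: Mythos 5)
Your overall plan matches the paper's at the macro level: reduce to $G_{\delta}$-genericity of full support and zero entropy inside $\mathcal{M}_1(N,f)$ via a Sigmund-type argument adapted to the periodic gluing orbit property, then lift to $C^0$-genericity of observables. The paper invokes Morris's Corollary~1.2 from \cite{morris2010ergodic} as a black box for this second passage; you propose instead to re-derive it, and that is exactly where your sketch has a genuine gap. In your density argument for the ``lifting statement'' you want, given $\varphi_0$ and $\varepsilon>0$, to fix an arbitrary periodic measure $\delta_{\mathcal O}\notin\mathcal K$, take a small weak-$*$ neighbourhood $V$ of it with $\overline{V}\cap\mathcal K=\emptyset$, and find $\psi$ with $\|\psi\|_\infty<\varepsilon$ such that $\mathcal{M}^*(\varphi_0+\psi)\subset V$. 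This is impossible for an arbitrary choice of $\delta_{\mathcal O}$: if $\mu\in\mathcal{M}^*(\varphi_0+\psi)$ with $\|\psi\|_\infty<\varepsilon$, then $\int\varphi_0\,d\mu\ge\int\varphi_0\,d\nu-2\varepsilon$ for every $\nu\in\mathcal{M}_1(N,f)$, so every element of $\mathcal{M}^*(\varphi_0+\psi)$ is $2\varepsilon$-nearly $\varphi_0$-maximizing. A periodic $\delta_{\mathcal O}\notin\mathcal K$ may well have $\int\varphi_0\,d\delta_{\mathcal O}$ far below $M(\varphi_0,f)$, in which case no $\varepsilon$-small perturbation can push the maximizing set near it; coboundary adjustments cannot rescue this, since adding a coboundary changes no integral against any invariant measure.

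What is actually needed is to pick $\delta_{\mathcal O}$ near the face $\mathcal{M}^*(\varphi_0)$, not arbitrarily outside $\mathcal K$: first replace $\varphi_0$ by a nearby $\varphi_1$ with a unique maximizing measure $\nu$ (via Theorem~\ref{Jenkinson}), then use that periodic measures are weak-$*$ dense (a consequence of the periodic gluing orbit property, which you already invoke) together with nowhere-density of $\mathcal K$ to locate a periodic $\delta_{\mathcal O}$ close to $\nu$ and outside $\mathcal K$ --- so that $\int\varphi_1\,d\delta_{\mathcal O}$ is close to $M(\varphi_1,f)$ --- and finally perform a Yuan--Hunt/Bousch-type localization by subtracting from $\varphi_1$ a small multiple of the distance to the orbit $\mathcal O$. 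With this correction, the periodic gluing orbit property would have to enter your route twice, once in the Sigmund-type genericity step and once in the density step of the lifting lemma, whereas the paper only needs it in the first place and outsources the lifting to Morris. Your description of the Sigmund-type step itself is faithful to what the paper actually proves.
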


\begin{proof}
This result is a simple modification of \cite[Corollary 1.3]{morris2010ergodic}. Indeed, Corollary~1.2 in \cite{morris2010ergodic}
ensures that the ergodic maximizing measures of a generic continuous function have the same properties as
generic ergodic measures. Hence, it suffices to check that the gluing orbit property can replace Bowen's specification
as a mechanism to prove that full supported and zero entropy measures are Baire generic.

Since the proof follows Sigmund's approach in \cite{Sig70} closely, with minor modifications, we just emphasize the diferences in the argument.
Let $\{U_i\}_{i\ge 0}$ be a countable basis of the topology in $N$.
For any $i\ge 0$, by weak$^*$ convergence it is clear that the space $\mathcal E_i$ of invariant probabilities $\mu$
so that $\mu(U_i)=0$ is closed.  We claim that this set has also empty interior. Indeed, let $\mu\in \mathcal M_1(f)$ be such that
$\mu(U_i)=0$, and let $\vep>0$ and $\varphi_i\in C^0(N,\mathbb R)$ be arbitrary and define the open neighborhood
$$\mathcal V= \mathcal V(\mu; \varphi_1, \dots, \varphi_k, \frac\vep4)
		:=\Big\{\nu \in \mathcal M_1(f) \colon \big|  \int \varphi_i \, d\mu - \int \varphi_i \, d\nu\big| < \frac\vep4, \; \forall 1\le i \le k\Big\}
$$
of $\mu$, in the weak$^*$-topology. By uniform continuity, there exists $\delta>0$ so that $|\varphi_i(x)-\varphi_i(y)|<\frac\vep4$
 whenever $d(x,y)<\delta$.  Assume further, reducing $\delta$ is necessary, that there exists $x_0\in U_i$ so that $B(x_0,2\delta)\subset U_i$.

 By Birkhoff's ergodic theorem there exists a $\mu$-full measure subset $N_0\subset N$ so that for every $x\in N_0$
and $1\le i \le k$ the limit
$
\varphi_i^*(x)= \lim_{n\to\infty} \frac1n\sum_{j=0}^{n-1} \varphi_i(f^j(x))
$
exists and $\int \varphi_i^*\, d\mu=\int \varphi_i \,d\mu$.
Let $\mathcal P$ be a finite partition of $N_0$ so that $\sup_{x\in P} \varphi_i^*(x) - \inf_{x\in P} \varphi_i^*(x) < \frac\vep4$
for every $P\in \mathcal P$ and every $1\le i \le k$. In consequence, for   $x_P\in P$ arbitrary,
$$
\big| \int \varphi_i \, d\mu - \sum_{P\in \mathcal P} \mu(P) \, \varphi_i^*(x_P)\big|
	= \big| \int \varphi_i^* \, d\mu - \sum_{P\in \mathcal P} \mu(P) \, \varphi_i^*(x_P)\big| <\frac\vep4
$$
Choose $N_1\ge 1$ so that $|\frac1n \sum_{j=0}^{n-1} \varphi_i(f^j(x_P)) - \int \varphi_i \, d\mu |< \frac\vep4$
for every $1\le i \le k$, every $P\in \mathcal P$ and every $n\ge N_1$. Choose $N_2 \ge N_1$ so that
every $m \ge N_2$ can be written as
$$
m=\sum_{P\in\mathcal P} m_P
	\quad\text{where}\quad
	\big| \frac{m_P}{m} -\mu(P) \big| \le \frac{\vep}{12 \, [\# \mathcal P] \,\max_{1\le i \le k} \|\varphi_i\|_0}.
$$
Finally, by the gluing orbit property, for each $N_3\ge N_2$ there exists a periodic point $z\in N$ so that $d(z,x_0)<\delta$
and for each $P \in \mathcal P$ the orbit of $z$ shadows the finite piece of orbit
$\{x_{P}, f(x_{P}), \dots, f^{N_3-1}(x_{P}) \}$ for $m_P$ consecutive times, recursively, with time lags in between the
orbits of length at most $m(\delta)$  (here $m=m(\delta)$ is given by the gluing orbit property at scale $\delta$).
The period of $z$ is
$$
\pi_z:=1+ p_0 + \sum_{j=1}^{\# \mathcal P} (N_3 + p_j)
$$
where $0\le p_j \le m(\delta)$ for every $0\le j \le \#\mathcal P$.
It is clear from the construction that $\mu_z:=\frac1{\pi_z} \sum_{j=0}^{\pi_z-1} \delta_{f^j(z)}$ is such that $\mu_z(U_i)>0$.
Moreover, one can choose $N_3\gg N_2$ large enough so that the proportion of the orbit of the point $z$
outside of the shadowing process satisfies
$$
\frac{\sum_{j=0}^{\# \mathcal P}  p_j}{1+ p_0 + \sum_{j=1}^{\# \mathcal P} (N_3 + p_j)}
	\le \frac{ m(\delta) \, \# \mathcal P }{1 + N_3 \, \# \mathcal P }
	< \frac{\vep}{12  \,\max_{1\le i \le k} \|\varphi_i\|_0}.
$$
In particular, the proof that $\mu_z\in \mathcal V$ will follow the same lines of \cite[pp. 104]{Sig70}.
Altogether,
the latter shows that each $\mathcal E_i$ is a closed set with empty interior, hence the invariant
measures giving positive measure to open sets form a Baire generic subset.
The proof that zero entropy measures form a Baire generic subset is identical and left as an exercise to the reader.
\end{proof}

We now focus on expanding dynamics.
Given a compact metric space  $N$,  a map $T:N\to N$ is \textit{Ruelle expanding} if there are $k\in\mathbb{Z}^{+}$ and $0<\lambda<1$ such that for every point $x\in N$ there is
a neighborhood $U_{x}$ of $x$ in $N$ and continuous branches $S_{i}$, $i=1,\ldots,\ell_{x}\le k$ of the inverse of $T$ such that
$
T^{-1}(U_{x})=\bigcup_{i=1}^{\ell_{x}}S_{i}(U_{x}),\ \ T\circ S_{i}=I_{U_{x}}
$
for all $i$, and
$
d\left(S_{i}(y),S_{i}(z)\right)\le\lambda d(y,z)
$
for all $y,z\in U_{x}$.
Transitive Ruelle expanding maps satisfy the periodic gluing orbit property~\cite{BV}.
Throughout, assume without loss of generality that $\text{diam}\,N=1$ and
let $C^{\alpha}(N,\mathbb{R})$ denote the space of $\alpha$-H\"{o}lder
observables (ie. $\varphi\in C^{\alpha}(N,\mathbb{R})$ if there are constants
$C,\alpha>0$
so that
$
|\varphi(x)-\varphi(y)|\le Cd(x,y)^{\alpha}
$
for all $x,y\in N$).

\begin{theorem} \cite{contreras2016ground} \label{thmContreras} If $N$ is a compact metric space and $f:N\to N$
is a Ruelle expanding map there is an open and dense set $\mathcal{O}\subset C^{\alpha}(N,\mathbb{R})$
so that every $\varphi\in\mathcal{O}$ admits a unique $\varphi$-maximizing
measure and it is supported on a periodic orbit.
\end{theorem}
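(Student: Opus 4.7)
The plan follows the Ma\~n\'e--Conze--Guivarch--Bousch framework. The first technical ingredient is a sub-action lemma: for every $\varphi\in C^\alpha(N,\mathbb{R})$ there exists $u\in C^\alpha(N,\mathbb{R})$, called a H\"older sub-action, such that
\[
\tilde\varphi := \varphi + u\circ f - u - M(\varphi,f) \le 0 \quad \text{pointwise on } N.
\]
Since $f$ is Ruelle expanding with inverse branches contracting at rate $\lambda<1$, a Lax--Oleinik-type formula of the form $u(x)=\sup_{n\geq 0}\sup_{y:f^n y = x}(S_n\varphi(y)-n\,M(\varphi,f))$ with $S_n\varphi=\sum_{i=0}^{n-1}\varphi\circ f^i$ is well defined and $\alpha$-H\"older, with H\"older norm controlled by the geometric series $\sum_n \lambda^{\alpha n}\|\varphi\|_{C^\alpha}$. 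Any $f$-invariant measure $\mu$ satisfies $\int(u\circ f -u)\,d\mu=0$, so a $\varphi$-maximizing measure satisfies $\int \tilde\varphi\,d\mu=0$ and is supported in the closed forward-invariant set $K(\varphi):=\{\tilde\varphi=0\}$.

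For density, fix $\varphi\in C^\alpha(N,\mathbb{R})$. The shadowing/closing lemma for Ruelle expanding maps (or a small $C^\alpha$-perturbation) yields a periodic orbit $\gamma\subset K(\varphi)$ whose Dirac measure $\nu_\gamma$ is $\varphi$-maximizing. Now set
\[
\psi := \varphi - \vep\, d(\cdot,\gamma)^{\alpha},
\]
which is $\vep$-small in $C^\alpha$ and satisfies $\psi\le\varphi$ with equality precisely on $\gamma$. Integrating $\psi$ against $\nu_\gamma$ and against any invariant $\mu$ gives
\[
M(\varphi,f)=\int\varphi\,d\nu_\gamma=\int\psi\,d\nu_\gamma\le M(\psi,f)\le M(\varphi,f),
\]
so $\nu_\gamma$ is $\psi$-maximizing; moreover if $\mu$ is $\psi$-maximizing then $\int\varphi\,d\mu=M(\varphi,f)$ and $\int d(\cdot,\gamma)^\alpha\,d\mu=0$, forcing $\supp\mu\subseteq\gamma$ and hence $\mu=\nu_\gamma$.

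For openness, suppose $\varphi_0$ is in the candidate set $\mathcal O$, with unique maximizing measure $\nu_{\gamma_0}$. Then $\tilde\varphi_0\le -\delta<0$ on the complement of a small open neighborhood $V$ of $\gamma_0$. The H\"older sub-action can be chosen to depend continuously on $\varphi$ in the $C^\alpha$-topology (this follows from the Lax--Oleinik construction above and the uniform contraction of inverse branches), so for $\psi$ sufficiently close to $\varphi_0$ in $C^\alpha$ we still have $\tilde\psi<0$ outside $V$, which forces the support of every $\psi$-maximizing measure to lie in $V$. By expansiveness (a consequence of the Ruelle expanding property) the only $f$-invariant compact subset of a sufficiently small neighborhood of a hyperbolic periodic orbit is a nearby periodic orbit $\gamma_\psi$, hence $\psi$ has a unique maximizing measure, namely $\nu_{\gamma_\psi}$.

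The main obstacle is the first step: producing the sub-action with H\"older regularity and with continuous dependence on $\varphi$. Ruelle expanding maps need not be Markov, so one cannot simply invoke the thermodynamic formalism on a symbolic model; the supremum in the Lax--Oleinik formula must be shown to be finite and H\"older using only uniform contraction of inverse branches together with a telescoping argument along preimages, which is the technical core of Contreras' proof. Once that is in hand, the Aubry-set/perturbation arguments in the remaining steps are essentially soft.
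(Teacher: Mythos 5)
This is a statement the paper does not prove; it is cited from Contreras' \emph{Invent.\ Math.}\ 2016 paper, and the only thing the paper adds is the remark immediately following it: replacing the metric $d$ by $d^{\alpha}$ turns $\alpha$-H\"older functions into Lipschitz functions and preserves the Ruelle expanding property, so the stated $C^{\alpha}$ version reduces directly to Contreras' Lipschitz version. Your proposal, by contrast, attempts to reconstruct Contreras' theorem itself from the Ma\~n\'e--Conze--Guivarch--Bousch sub-action framework. That is a much more ambitious project than what the paper is doing, and it is worth flagging the mismatch before discussing the argument's merits.

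The serious gap is in the density step. You write that ``the shadowing/closing lemma for Ruelle expanding maps (or a small $C^{\alpha}$-perturbation) yields a periodic orbit $\gamma\subset K(\varphi)$.'' Shadowing produces a periodic orbit \emph{near} a recurrent point of $K(\varphi)$, not \emph{inside} $K(\varphi)$; the set $K(\varphi)=\{\tilde\varphi=0\}$ is merely a closed forward-invariant set and has no reason to contain any periodic orbit. Once $\gamma$ is only $\delta$-close to $K(\varphi)$, the $\varphi$-average along $\gamma$ is $M(\varphi,f)-O(\delta^{\alpha})$ and the naive perturbation $\psi=\varphi-\vep\,d(\cdot,\gamma)^{\alpha}$ no longer makes $\nu_\gamma$ maximizing without further quantitative control; bridging that gap while keeping the perturbation small in the $C^{\alpha}$ (Lipschitz) norm is precisely the heart of Contreras' theorem and is what Yuan--Hunt could not do and what required the subsequent work of Morris and Quas--Siefken that Contreras builds on. Your closing paragraph identifies ``the technical core of Contreras' proof'' as the construction of a H\"older sub-action with continuous dependence on $\varphi$ and declares the remaining ``Aubry-set/perturbation arguments'' to be ``essentially soft''; this has the difficulties inverted. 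Sub-actions for expanding maps and H\"older potentials are classical (Bousch, Contreras--Lopes--Thieullen, and others) and pose no serious obstacle; the density of observables whose Aubry set is a single periodic orbit is the genuinely hard part, and your parenthetical ``or a small $C^{\alpha}$-perturbation'' waves it away rather than proving it. The openness sketch also silently assumes that $K(\varphi_0)$, not just the support of the maximizing measure, is concentrated near $\gamma_0$, which needs justification.
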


\begin{remark}
As stated above, Theorem~\ref{thmContreras} differs from the version presented in \cite{contreras2016ground},
which was stated for the space $\text{Lip}(N,\mathbb{R})$ of Lipschitz observables instead of $C^{\alpha}(N,\mathbb{R})$.
Nevertheless, Theorem~\ref{thmContreras} is a direct consequence of the main result in \cite{contreras2016ground}
together with the fact that, since $\diam N=1$, the space of $\alpha$-H\"older continuous functions
with respect to the metric $d(\cdot,\cdot)$ coincides with the space of Lipschitz functions
with respect to the modified metric
$d_{\alpha}(\cdot,\cdot)=d(\cdot,\cdot)^{\alpha}$.
\end{remark}

Finally we recall Shinoda's result on the dense non-uniqueness of maximizing measures.

\begin{theorem}\cite{Shinoda}\label{Shinoda}
Let $(\Sigma_R,\sigma)$ be a one-sided topologically mixing subshift of finite type. There exists a dense subset $\mathcal D \subset
C^0(\Sigma_R,\mathbb R)$ such that for every $\varphi \in \mathcal D$ there exist uncountably many ergodic $\varphi$-maximizing measures with full support and positive entropy.
\end{theorem}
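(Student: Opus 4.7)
The plan is to prove density of $\mathcal D$ in $C^0(\Sigma_R, \mathbb R)$ by a direct construction: given $\psi \in C^0(\Sigma_R, \mathbb R)$ and $\varepsilon > 0$, I would produce $\varphi \in C^0(\Sigma_R, \mathbb R)$ with $\|\varphi - \psi\|_\infty < \varepsilon$ whose set of $\varphi$-maximizing measures contains an uncountable family of ergodic measures with full support and positive entropy.

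First, I would approximate $\psi$ by a H\"older continuous function $\psi_0$ with $\|\psi - \psi_0\|_\infty < \varepsilon/3$. Since $\Sigma_R$ is a topologically mixing subshift of finite type, a Ma\~n\'e--Bousch lemma yields a continuous calibrated sub-action $u$ with $\psi_0 + u - u\circ\sigma \le M(\psi_0,\sigma)$, with equality exactly on the Aubry set. Replacing $\psi_0$ by its cohomologous normalization $\tilde\psi_0 := \psi_0 + u - u\circ\sigma - M(\psi_0,\sigma) \le 0$, the problem reduces to adding a small continuous perturbation $g$ to $\tilde\psi_0$ so that many distinct invariant measures attain the integral value zero for the perturbed potential.

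For the heart of the construction, by Sigmund's classical density theorem for mixing SFTs, the ergodic measures with full support in $\Sigma_R$ and positive entropy form a dense $G_\delta$ subset of $\mathcal M_1(\Sigma_R, \sigma)$. I would enumerate a countable dense sequence $(\mu_n)_n$ of such measures and build $g$ as an absolutely summable series $g = \sum_n g_n$ with $\|g_n\|_\infty \le 2^{-n}\varepsilon / 3$. At stage $n$, the continuous function $g_n$ is designed so that $\int (\tilde\psi_0 + g_1 + \cdots + g_n) \, d\mu_k = 0$ for every $k \le n$, without producing any invariant measure with strictly larger integral. This can be engineered by supporting $g_n$ on small cylinders on which the measures $\mu_1,\dots,\mu_{n-1}$ assign prescribed, controlled masses, exploiting that distinct ergodic measures of a mixing SFT have distinct statistics on sufficiently refined cylinder partitions. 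Setting $\tilde\varphi := \tilde\psi_0 + g$ and undoing the cohomological reduction, $\varphi := \tilde\varphi - u + u\circ\sigma + M(\psi_0,\sigma) + (\psi - \psi_0)$ satisfies $\|\varphi - \psi\|_\infty < \varepsilon$, and the face $F(\varphi) := \{\nu \in \mathcal M_1(\Sigma_R,\sigma) : \int \varphi \, d\nu = M(\varphi,\sigma)\}$ contains every $\mu_n$.

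The hard part is precisely the inductive construction of the $g_n$: each step must add $\mu_n$ to the maximizing face without destroying the maximizing property of $\mu_1,\dots,\mu_{n-1}$ and without lifting some extraneous invariant measure above the current maximum. This rests on the separation of distinct ergodic measures through their cylinder statistics and requires careful bookkeeping with the coboundary corrections built into each $g_n$. It is essential that we work in the $C^0$ category rather than the H\"older one: for H\"older potentials, the existence of a fully supported maximizing measure forces cohomology to a constant (via the continuous sub-action and equality on the support), a rigidity which does not hold for merely continuous potentials, and it is exactly this flexibility that allows a non-trivial dense subset $\mathcal D$ to exist. Finally, to extract uncountably many ergodic maximizing measures from $F(\varphi)$, one applies Choquet's theorem: $F(\varphi)$ is a closed convex subset of $\mathcal M_1(\Sigma_R,\sigma)$ containing the closure of the dense sequence $(\mu_n)_n$, hence it has uncountably many extreme points, and by the preceding properties a residual subfamily of these extreme points are ergodic with full support and positive entropy.
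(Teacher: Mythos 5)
The paper does not prove this statement: Theorem~\ref{Shinoda} is quoted from Shinoda's Nonlinearity article \cite{Shinoda}, so there is no internal proof to compare against. Assessing your proposal on its own, there is a fatal structural obstruction built into it. You plan to place a countable sequence $(\mu_n)_n$, dense in the set of ergodic fully-supported positive-entropy measures, inside the maximizing face $F(\varphi)$. For a mixing subshift of finite type the ergodic fully-supported positive-entropy measures are already dense in the whole simplex $\mathcal M_1(\Sigma_R,\sigma)$ (they form a meager but dense set, not a $G_\delta$ as you assert---by Sigmund's theorem zero entropy is the generic condition, so positive-entropy measures are $F_\sigma$ and meager). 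Since $F(\varphi)$ is weak$^*$ closed, if it contains every $\mu_n$ then $F(\varphi)\supseteq\overline{\{\mu_n:n\in\mathbb N\}}=\mathcal M_1(\Sigma_R,\sigma)$, i.e.\ every invariant measure is $\varphi$-maximizing. That forces $\int\varphi\,d\mu$ to be independent of $\mu$, so $\varphi$ lies in the closed proper linear subspace $V=\{\varphi\in C^0(\Sigma_R,\mathbb R): \int\varphi\,d\mu=\int\varphi\,d\nu\ \text{for all}\ \mu,\nu\in\mathcal M_1(\Sigma_R,\sigma)\}$, which has empty interior in $C^0(\Sigma_R,\mathbb R)$ and is therefore nowhere dense. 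So if the inductive scheme for the $g_n$ could actually be run to completion, it would produce $\varphi$ inside a nowhere-dense set, contradicting the very density you are trying to establish. One cannot make a set of measures that is dense in $\mathcal M_1(\Sigma_R,\sigma)$ all maximizing and simultaneously land on a $C^0$-dense set of observables.

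There are two secondary gaps. The inductive construction itself---which you flag as the hard part---is not carried out; the difficulty is not satisfying $\int g_n\,d\mu_k=0$ for $k<n$ (finitely many linear constraints at each step) but ruling out that some extraneous invariant measure gets pushed strictly above the running maximum by $g_n$, and nothing in the cylinder-supported-perturbation idea addresses this: since the $\mu_k$ are fully supported there is no small region you can perturb on without disturbing every other invariant measure as well. And even granting $F(\varphi)$ is a large proper face, the Choquet step extracts ergodic extreme points but gives no control over their supports or entropies: a face of the Poulsen simplex can perfectly well contain extreme points supported on periodic orbits (zero entropy, nowhere-dense support), so the conclusion that uncountably many of them have full support and positive entropy does not follow from abstract convexity. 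A correct argument must target a carefully tailored proper compact subset of $\mathcal M_1(\Sigma_R,\sigma)$ whose extremal structure is understood in advance, rather than a set that closes up to the entire simplex.
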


\subsection{Hyperbolic flows}\label{s:hyp}

Let $M$ be a closed Riemannian manifold and let $(X^{t})_{t}\colon M\to M$
be a smooth flow. Let $\Lambda\subseteq M$ be a compact
$(X^{t})_{t}$-invariant set. The flow $(X^{t})_{t}\colon\Lambda\to\Lambda$
is \emph{uniformly hyperbolic} if for every $x\in\Lambda$ there exists
a $DX^{t}$-invariant and continuous splitting $T_{x}M=E_{x}^{s}\oplus E_{x}^{X}\oplus E_{x}^{u}$
and constants $C>0$ and $0<\theta_{1}<1$ such that
\begin{equation}
\|DX^{t}\mid E_{x}^{s}\|\leq C\theta_{1}^{t}\quad\text{and}\quad\|(DX^{t})^{-1}\mid E_{x}^{u}\|\leq C\theta_{1}^{t},\label{eqhip}
\end{equation}
for every $t\ge0$. We say that $(X^{t})_{t}$ is an Anosov flow if
$(X^{t})_{t}\colon M\to M$ is uniformly hyperbolic.
It is well known that adapted
metrics exist, hence we may assume $C=1$.
Given a hyperbolic set $\Lambda$,
for each $x\in\Lambda$ consider the stable and the unstable manifolds
$
W^{s}(x)=\{y\in M:\text{dist}(X^{t}(y),X^{t}(x)) \to 0 \;\text{as}\; t\to+\infty\}
$
and
$
W^{u}(x)=\{y\in M:\text{dist}(X^{t}(y),X^{t}(x)) \to 0 \;\text{as}\; t\to-\infty\},
$
respectively.
By the stable manifold theorem, uniform hyperbolicity ensures that there exists $\vep>0$ so that
the largest connected components $W_{loc}^{s}(x)\subset W^{s}(x)$ and $W_{loc}^{u}(x)\subset W^{u}(x)$
of size $\vep$ containing $x$ are smooth submanifolds, called respectively (local) stable and unstable manifolds
(of size $\vep$) at the point $x$. Moreover:
\begin{enumerate}
\item $T_{x}W_{loc}^{s}(x)=E^{s}(x)$ and $T_{x}W_{loc}^{u}(x)=E^{u}(x)$;
\item for each $t>0$ we have
$
X^{t}(W_{loc}^{s}(x))\subset W_{loc}^{s}(x)(X^{t}(x))\ \ \text{and}\ \ X^{-t}(W_{loc}^{u}(x))\subset W_{loc}^{u}(X^{-t}(x));
$
\item there exist $\kappa>0$ and $\gamma\in(0,1)$ such that for each $t>0$
we have
$
d(X^{t}(y),X^{t}(x))\le\kappa\gamma^{t}d(y,x)\ \ \text{for}\ \ y\in W_{loc}^{s}(x),
$
and $d(X^{-t}(y),X^{-t}(x))\le\kappa\gamma^{t}d(y,x)\ \ \text{for}\ \ y\in W_{loc}^{u}(x).$
\end{enumerate}
Moreover, the set $\Lambda$ is \emph{locally maximal}
if there exists an open neighborhood $U$ of $\Lambda$ such that
$
\Lambda=\bigcap_{t\in\mathbb{R}}X^{t}(U).
$

A point $p \in M$ is a \emph{singularity} for $X$ if $X(p)=0$,
and is called a \emph{regular point} otherwise.
We say that a singularity $p$ is \emph{hyperbolic} if the one-point invariant set $\{ p \}$ is a hyperbolic set.
A point $p\in M$ is \emph{periodic} if there exists a minimum period
$T>0$ so that $X^T(p)=p$, and we say that $p$ is a \emph{periodic hyperbolic point} if the orbit
$\mathcal O(p)=\cup_{t\in [0,T]} X^t(p)$ is a hyperbolic set for $X$.
Finally, (an orbit of) a point $x$ by the flow is called a \emph{critical element} if it is either periodic or $x$ is a singularity.

Now let $\Lambda$ be a locally maximal hyperbolic set. For any sufficiently
small $\vep$, there exists a $\delta>0$ such that if $x,y\in\Lambda$
are at a distance $d(x,y)\le\delta$, then there exists a unique $t=t(x,y)\in[-\vep,\vep]$
for which the set
$
[x,y]=W_{loc}^{s}(x)(X^{t}(x))\cap W_{loc}^{u}(y)
$
is a single point in $\Lambda$ (see e.g. \cite[Proposition 7.2]{hirsch1970neighborhoods}).

\begin{definition}
\label{def:basic_set}We say that $\Lambda$ is a \emph{hyperbolic
basic} set if
(i) $\Lambda$ is hyperbolic (not a fixed point);
(ii) the periodic orbits of $(X^{t})_{t}|\Lambda$ are dense in $\Lambda$;
(iii) $(X^{t})_{t}|\Lambda$ is transitive (contains a dense orbit);
(iv) $\Lambda$ is locally maximal.
We say $(X^{t})_{t}$ is \emph{Axiom A} if $\Omega$ is the
disjoint union of hyperbolic sets and a finite number of hyperbolic
fixed points. Moreover, we say that $\Lambda$ is a \emph{horseshoe} if
it is topologically conjugated to the suspension flow over a subshift of finite type.
\end{definition}

\subsection{Singular-hyperbolic and Lorenz attractors}\label{s:shyp}

We say that a compact $(X^t)_{t\in\mathbb R}$-invariant set $\Lambda\subset M$ is \emph{partially hyperbolic} if there are a continuous
invariant splitting $T_{\Lambda}M=E^s\oplus E^c$, constants $C>0$ and $\lambda\in (0,1)$ so that
\begin{equation*}
\|D_xX^t|E_x^s\|\le C\lambda^t \quad \text{ and } \quad \|D_xX^t|E_x^s\| \cdot \|D_{X^t(x)}X_{-t}|E^c_{X^t(x)}\|\le C\lambda^t
\end{equation*}
for every $x\in\Lambda$ and $t\ge 0$.
If, in addition, the following two conditions (i) and (ii) hold, then we say that $\Lambda$ is \emph{sectional-hyperbolic}:
\begin{itemize}
\item[(i)] every singularity $p\in\Lambda$ is hyperbolic;
\item[(ii)] $E^c$ is sectionally expanding, i.e. $\dim E^c\ge 2$ and $|\det(D_xX^t\mid_{L_x})|\ge C^{-1}\lambda^t$ for every $x\in\Lambda$, $t\ge 0$, and every two-dimensional subspace $L_x\subset E^c_x$.
\end{itemize}
With some abuse of notation, we say that the flow $(X^t)_{t\in \mathbb R}$ is partially hyperbolic if $M$ is a partially hyperbolic set.
\textcolor{black}{$\Lambda$ is said to be \emph{singular-hyperbolic} if $\Lambda$ is partially hyperbolic such that it satisfies the above condition (i) and $E^c$ is volume expanding, i.e. $|\det(D_xX^t\mid_{E^c_x})|\ge C^{-1}\lambda^t$ for every $x\in\Lambda$, $t\ge 0$.}

Finally we give a brief description of the construction of {geometric Lorenz attractors}. We will follow  \cite{araujo2010three}
(see also \cite[Section 3]{ArVar}).
Let $\Sigma=\{(x,y,1)\in\mathbb{R}^{3}:|x|,|y|\le1\}$ and $\Gamma=\{(0,y,1)\in\mathbb{R}^{3}:|y|\le1\}$.
Consider a $C^1$-vector field $X_0$ on $\mathbb{R}^{3}$ so that the following conditions hold:
\begin{enumerate}
\item For any point $(x,y,z)$ in a neighborhood of the origin $(0,0,0)$
of $\mathbb{R}^{3}$, $X_0$ is given by $$(\dot{x},\dot{y},\dot{z})=(\lambda_{1}x,-\lambda_{2}y,-\lambda_{3}z)$$
where $0<\lambda_{3}<\lambda_{1}<\lambda_{2}$.
\item All forward orbits of $X$ starting from $\Sigma\backslash\Gamma$
will return to $\Sigma$ and the first return map $P:\Sigma\backslash\Gamma\to\Sigma$
is a piecewise $C^{1}$-diffeomorphism which has the form
\begin{equation}\label{eq:defPoincare}
P(x,y,1)=(\alpha(x),\beta(x,y),1),
\end{equation}
where $\alpha:[-1,1]\backslash\{0\}\to[-1,1]$ is a piecewise $C^{1}$-map
with $\alpha(-x)=-\alpha(x)$ and there is $0<\gamma<1$ satisfying
\begin{equation}\label{eq:defLorenz}
\begin{cases}
\alpha(x) =x^\gamma & \text{in a neighborhood of}\; 0 \\
\alpha'(x) >\sqrt 2, & \text{for any}\ x\text{\ensuremath{\neq}}0 \\
 \alpha(1)<1, & \\
\lim_{x\text{\ensuremath{\to}}0^{+}}\alpha(x)=-1, &\\
\lim_{x\text{\ensuremath{\to}}0^{+}}\alpha(x)=\text{\ensuremath{\infty}}, &
\end{cases}
\end{equation}
and there exists $\lambda\in (0,1)$ so that $|\frac{\partial \beta}{\partial y}(x,y)|\le \lambda$ for every $(x,y,1)\in \Sigma$.
\end{enumerate}
The second condition in \eqref{eq:defLorenz} ensures that the one-dimensional map $\alpha:[-1,1]\backslash\{0\}\to[-1,1]$ is
\emph{locally eventually onto}: for any open interval $I\subset [-1,1]\backslash\{0\}$ there exists $N\ge 1$ so that $\alpha^N(I)=(\alpha(-1),\alpha(1))$.
(cf. \cite[Lemma 3.16]{araujo2010three}).
We say that a one-dimensional Lorenz map is \emph{wild} if $\sup_\mu \int \log |f'| \, d\mu=+\infty$, where the supremum is taken
over all $f$-invariant probability measures $\mu$.

\begin{remark}\label{rmk:slow-rec}
Although the derivative of Lorenz maps is unbounded it can occur that for a particular Lorenz map $\alpha$
all orbits have slow recurrence to the singular point, causing all invariant measures to have uniformly bounded
Lyapunov exponent (e.g. this is the case when the singular point is pre-periodic repelling). While it is expected for both classes
of wild and non-wild geometric Lorenz attractors to be locally dense (in a $C^2$-neighborhood of the original vector field),
it was recently announced that non-wild Lorenz maps are actually generic along special parameterized families of Lorenz attractors \cite{Pedreira}.
\end{remark}

There exists an open elipsoid $V \subset \mathbb R^3$ containing the origin such that the vector field $X_0$ points
inwards, hence it exhibits an attractor.
If $\mathcal U \subset \mathfrak{X}^1(\mathbb R^3)$ is a $C^1$-open set of the vector field $X_0$ and an open elipsoid
$V \subset \mathbb R^3$ containing the origin such that every $X\in \cU$ exhibits a partially hyperbolic attractor
$\Lambda_X = \bigcap_{t\ge 0} \overline{X^t(V)}$,
and it is called \emph{geometric Lorenz attractor}.
We say that $\Lambda$ is a \emph{wild Lorenz attractor} if the corresponding one-dimensional Lorenz map $\alpha$, obtained
by quotient along local stable leaves in the cross-section $\Sigma$, is wild.

It is well known that for every $X\in \mathcal U$ there exists a periodic point
$p_X \in \Lambda_X$ so that the Lorenz attractor $\Lambda_X$ coincides with the homoclinic class
$H(p_X):=\overline{W^{s}(p_X) \pitchfork W^{u}(p_X)}$ (cf. \cite[Proposition~3.17]{araujo2010three}).
In particular, the attractor
is transitive and, by Birkhoff-Smale's theorem (see e.g. \cite{KH}) it admits a
dense set of hyperbolic periodic orbits.
Moreover, any singular-hyperbolic attractor without singularities is uniformly hyperbolic
(see e.g. \cite{{araujo2010three}}).

As three-dimensional singular-hyperbolic attractors have only hyperbolic singularities whose unstable manifolds are one-dimensional
(see e.g. \cite{araujo2010three}) we denote by $W^{u,+}(\sigma)$ and $W^{u,-}(\sigma)$ the connected components of
$W^{u}(\sigma)\setminus \{\sigma\}$.
We finish this subsection with the following characterization for $C^1$-generic singular-hyperbolic attractors.

\begin{proposition}\label{MP}
There is a residual subset $\mathcal R\subset \mathfrak{X}^1(M)$ such that if $\Lambda$ is a singular-hyperbolic attractor for $X\in
\mathcal R$ and $\Lambda \cap \text{Sing}(X)\neq\emptyset$ then $\Lambda =\overline{W^{u,+}(\sigma)}=\overline{W^{u,-}(\sigma)}$
for every $\sigma\in \Lambda \cap \text{Sing}(X)$.
\end{proposition}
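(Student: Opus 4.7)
The plan is to exhibit $\mathcal R$ as a countable intersection of $C^1$-open and dense subsets of $\mathfrak X^1(M)$, indexed by a countable basis $\{U_n\}_{n\ge 1}$ of the topology of $M$ together with the two branches $\epsilon\in\{+,-\}$. First note that the inclusion $\overline{W^{u,\epsilon}(\sigma)}\subseteq\Lambda$ is automatic: fixing an isolating trapping region $U$ with $\Lambda=\bigcap_{t\ge 0}X^{t}(U)$, any $y\in W^{u,\epsilon}(\sigma)$ satisfies $X^{-t}(y)\to\sigma\in U$, so $X^{-t}(y)\in U$ for all sufficiently large $t$, whence $y\in\bigcap_{t\ge 0}X^{t}(U)=\Lambda$; closedness of $\Lambda$ then gives $\overline{W^{u,\epsilon}(\sigma)}\subseteq\Lambda$. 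The proposition therefore reduces to showing that, for generic $X$, the branch $W^{u,\epsilon}(\sigma)$ meets every relatively open subset of $\Lambda$.

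For each pair $(n,\epsilon)$ I would let $\mathcal G_{n}^{\epsilon}\subset\mathfrak X^1(M)$ consist of those $X$ with the following property: whenever $\Lambda$ is a singular-hyperbolic attractor of $X$, $\sigma\in\Lambda\cap\text{Sing}(X)$ and $\Lambda\cap U_n\neq\emptyset$, then $W^{u,\epsilon}(\sigma)\cap U_n\neq\emptyset$. To prove density of $\mathcal G_{n}^{\epsilon}$, take any $X$ admitting such a triple $(\Lambda,\sigma,U_n)$. Since $\Lambda$ coincides with a homoclinic class $H(p_X)$, as recalled in Subsection~\ref{s:shyp}, Birkhoff--Smale's theorem supplies hyperbolic periodic orbits dense in $\Lambda$, and in particular some hyperbolic periodic orbit $\gamma\subset\Lambda$ with $\gamma\cap U_n\neq\emptyset$. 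Transitivity of $\Lambda$ places $\sigma$ and $\gamma$ in the same chain-recurrence class, so applying a Hayashi-type connecting lemma for three-dimensional singular-hyperbolic flows (in the versions of Wen--Xia or Bonatti--Gan--Yang) yields an arbitrarily $C^1$-small perturbation $Y$ of $X$ for which the one-dimensional branch $W^{u,\epsilon}(\sigma_Y)$ meets the two-dimensional stable manifold $W^s(\gamma_Y)$. The forward orbit of any such intersection point lies in $W^{u,\epsilon}(\sigma_Y)$ and converges to $\gamma_Y\subset U_n$, producing a point of $W^{u,\epsilon}(\sigma_Y)\cap U_n$; robustness of singular-hyperbolicity preserves the attractor, so $Y\in\mathcal G_{n}^{\epsilon}$.

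Openness of $\mathcal G_{n}^{\epsilon}$ rests on the continuous dependence of the singularity and of any compact arc of its unstable manifold on the vector field, together with the robustness and upper semicontinuity of singular-hyperbolic attractors: given $z\in W^{u,\epsilon}(\sigma)\cap U_n$, the compact arc of $W^{u,\epsilon}$ from $\sigma$ to $z$ perturbs continuously and still meets the open set $U_n$ for every sufficiently $C^1$-small $Y$. The possibility that a perturbation creates new singular-hyperbolic attractors meeting $U_n$ is handled by refining the definition of $\mathcal G_{n}^{\epsilon}$ with an additional parameter ranging over isolating blocks drawn from a countable basis of open sets of $M$; this keeps the index set countable while making the openness uniform. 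The intersection $\mathcal R:=\bigcap_{n,\epsilon}\mathcal G_{n}^{\epsilon}$ is then a $C^1$-residual subset of $\mathfrak X^1(M)$ satisfying the claim.

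The main obstacle is the connecting-lemma step in the presence of a Lorenz-type singularity: the supporting perturbation must be localized along an orbit segment bounded away from $\sigma$ so as to preserve the eigenvalue structure and Lorenz-type geometry at $\sigma$, while simultaneously producing the transverse intersection between the one-dimensional $W^{u,\epsilon}(\sigma)$ and the two-dimensional $W^s(\gamma)$. This is precisely the content of the singular-hyperbolic connecting lemma available for three-dimensional star flows, and it is the technical cornerstone on which the whole genericity argument rests.
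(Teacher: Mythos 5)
Your argument is correct in spirit but follows a genuinely different route from the paper's. The paper simply invokes Theorem~4.2 of Morales--Pacifico~\cite{MP}, whose proof (as the authors sketch) uses the $C^1$-connecting lemma to establish that $\overline{W^{u}(\sigma)}$ is a Lyapunov stable set for $C^1$-generic $X$, and then observes that the same argument runs unchanged with $W^{u,\pm}(\sigma)$ in place of $W^u(\sigma)$; combining Lyapunov stability with transitivity of the attractor forces $\overline{W^{u,\pm}(\sigma)}=\Lambda$. You, by contrast, bypass the Lyapunov-stability intermediate step and re-derive density of each branch by a bare-hands Baire category construction: open-dense sets $\mathcal G_n^\epsilon$ indexed by a countable basis of $M$, with density forced by perturbing $W^{u,\epsilon}(\sigma)$ into $W^s(\gamma)$ for a periodic orbit $\gamma$ meeting $U_n$, and openness handled via continuous dependence of compact arcs of the unstable manifold (together with a refinement over isolating blocks to control the vacuous case). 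Both routes ultimately rest on the same technical engine -- a connecting lemma adapted to the presence of a Lorenz-type singularity -- which you correctly identify as the cornerstone. The trade-off is that the paper's proof is a two-line citation whereas yours is self-contained but elides the recurrence bookkeeping needed to put $W^{u,\epsilon}(\sigma)$ and $W^s(\gamma)$ in position for the connecting lemma (going from ``same chain class'' to a perturbed heteroclinic intersection requires either the chain version of the connecting lemma or several Hayashi-type steps, and the supports must avoid a neighborhood of $\sigma$); this is precisely the content buried in~\cite{MP}'s pages 372--373 that the paper outsources, so it is fair to cite it as you do, but be aware that it is where most of the real work lives.
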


\begin{proof}
This results is a consequence of Theorem 4.2 in \cite{MP}. Indeed, the argument in \cite[pp 372--373]{MP}
uses the $C^1$-connecting lemma in order to prove that $\overline{W^{u}(\sigma)}$ is a Lyapunov stable set
for every singularity $\sigma\in \Lambda \cap \text{Sing}(X)$ of a $C^1$-generic vector field $X$.
Nevertheless, the argument follows without changes for the set $\overline{W^{u,+}(\sigma)}$ (and $\overline{W^{u,-}(\sigma)}$) instead of $\overline{W^{u}(\sigma)}$. Hence, there exists a $C^1$-residual subset $\mathcal R_*\subset \mathfrak{X}^1(M)$
so that if $\Lambda$ is a singular-hyperbolic attractor for $X\in \mathcal R_*$ and $\Lambda \cap \text{Sing}(X)\neq\emptyset$ then
$\Lambda =\overline{W^{u,*}(\sigma)}$ for every $\sigma\in \Lambda \cap \text{Sing}(X)$, for $*\in \{+,-\}$.
The $C^1$-residual subset $\mathcal R=\mathcal R_+\cap \mathcal R_-$ satisfies the requirements of the proposition.
\end{proof}

\begin{remark}\label{rmk:Xueting}
All non-atomic ergodic measures for singular-hyperbolic attractors can be approximated
by periodic measures due to the shadowing lemma in \cite{STV}. Indeed, by the presence of singularities the only difference
is that the closing lemma should be replaced by the extended Liao closing lemma in \cite{HW}.
We are grateful to X. Tian for pointing out this fact to us.
\end{remark}

\subsection{Suspension semiflows}\label{s:semiflows}

Let $f:N\rightarrow N$ be a continuous map on a compact metric space $(N,d_{N})$
and let $r:N\rightarrow(0,\infty)$ be a continuous function
bounded away from zero. Consider the quotient space
\begin{equation}
N^{r}=\Big\{ (x,t):0\le t\le r(x),x\in N\Big\} /\sim\label{eq:suspflow}
\end{equation}
where $(x,r(x))\sim(f(x),0)$.
The \emph{suspension semiflow over $f$ with height function $r$} is
the semiflow $(X^{t})_{t\in\mathbb{R}_+}$ in $N^{r}$ with $X^{t}:N^{r}\to N^{r}$
defined by $X^{t}(x,s)=(f^{n}(x),s^{\prime})$, where $n$ and $s^{\prime}$
are uniquely determined by
$
\sum_{i=0}^{n-1}r(f^{i}(x))+s^{\prime}=t+s\; \text{and}\; \ 0\le s^{\prime}<r(f^{n}(x)).
$
If $f$ is a homeomorphism then the previous expression defines a flow.
\medskip

We recall Bowen and Walters distance for suspension flows \cite{bowen1972expansive}.
Assume without loss of generality that the diameter of $N$ is bounded by one.
We first assume that the height function $r$ is constant equal to
1. Given $x,y\in N$ and $t\in[0,1]$, we define the length of
the horizontal segment $[(x,t),(y,t)]$ by
\[
\rho_{h}((x,t),(y,t))=(1-t)d_{N}(x,y)+td_{N}(f(x),f(y)).
\]
Clearly,
$
\rho_{h}((x,0),(y,0))=d_{N}(x,y)\ \ \text{e}\ \ \rho_{h}((x,1),(y,1))=d_{N}(f(x),f(y)).
$
 Moreover, given points $(x,t),(y,s)\in N^{r}$ in the same orbit,
we define the length of the vertical segment $[(x,t),(y,s)]$ by
\[
\rho_{v}((x,t),(y,s))=\inf\Big\{ |q|:X^{q}(x,t)=(y,s)\ \text{e}\ q\in\mathbb{R}\Big\} .
\]
For the height function $r=1$, the Bowen-Walters distance $d((x,t),(y,s))$
between two points $(x,t),(y,s)\in N^{r}$ is defined as the infimum
of the lengths of all paths between $(x,t)$ and $(y,s)$ that are
composed of finitely many horizontal and vertical segments.
Now we consider an arbitrary continuous height function $r:N\rightarrow(0,\infty)$
and we introduce the Bowen-Walters distance $d_{N^{r}}$ in $N^{r}$.
\begin{definition}
Given $(x,t),(y,s)\in N^{r}$ , we define
\[
d_{N^{r}}((x,t),(y,s))=d((x,t/r(x)),(y,s/r(y))),
\]
where $d$ is the Bowen-Walters distance for the height function $r=1$.
\end{definition}

For an arbitrary function $r$, a horizontal segment takes the form
$
w=[(x,t\cdot r(x)),(y,t\cdot r(y))],
$
and its length is given by
$
\ell_{h}(w)=(1-t)d_{N}(x,y)+td_{N}(f(x),f(y)).
$
 Moreover, the length of a vertical segment $w=[(x,t),(x,s)]$ is
now
$
\ell_{v}(w)=|t-s|/r(x),
$
for any sufficiently close $t$ and $s$.
It is sometimes convenient to measure distances in another manner.
Namely, given $(x,t),(y,s)\in N^{r}$, let
\begin{equation}
d_{\pi}((x,t),(y,s))=\min\left\{ \begin{array}{c}
d_{N}(x,y)+|t-s|,\\
d_{N^{r}}(f(x),y)+r(x)-t+s,\\
d_{N^{r}}(x,f(y))+r(y)-s+t,
\end{array}\right\} .\label{eqdpi(2.18)}
\end{equation}
Although $d_{\pi}$ may not be a distance it is related to the Bowen-Walters distance.

\begin{proposition}\cite[Proposition 2.1]{barreira2013dimension}
\label{prop:cdpi<dMr<cdpi(2.1)}If $f$ is an invertible Lipschitz
map with Lipschitz inverse, then there exists a constant $c\ge1$
such that
$
c^{-1}d_{\pi}(p,q)\le d_{N^{r}}(p,q)\le cd_{\pi}(p,q)
$
for every $p,q\in N^{r}.$
\end{proposition}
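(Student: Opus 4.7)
The plan is to prove the two inequalities separately. For the easier direction $d_{N^r}(p,q)\le c\,d_\pi(p,q)$, I would, for each of the three candidate values in the definition of $d_\pi$, exhibit an explicit concatenation of horizontal and vertical segments whose total length is comparable to that candidate. For the first term $d_N(x,y)+|t-s|$, take the horizontal segment at parameter $\tau=t/r(x)$ from $(x,t)$ to $(y,\tau\,r(y))$ followed by a vertical segment of length $|t-s|$ along the orbit of $y$; the Lipschitz bound on $f$ controls the horizontal length $\ell_h$ in terms of $d_N(x,y)$, and the vertical length is controlled by $|t-s|/\min_N r$. For the two remaining terms, concatenate a vertical piece reaching the identification $(x,r(x))\sim(f(x),0)$ with a path realizing the Bowen--Walters distance between base-level points and a vertical return; the resulting path has length bounded by the corresponding term of $d_\pi$ times a constant depending on $\min_N r$, $\max_N r$ and the Lipschitz constants of $f$ and $f^{-1}$.

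For the reverse inequality $d_\pi(p,q)\le c\,d_{N^r}(p,q)$, I would analyze an arbitrary zig-zag path $\gamma$ from $p=(x,t)$ to $q=(y,s)$ composed of alternating horizontal and vertical segments. First, using the Lipschitz bounds on $f$ and $f^{-1}$, each horizontal segment at parameter $\tau\in[0,1]$ can be replaced by a horizontal segment at base level $\tau=0$ (after adjusting the adjacent vertical pieces) at the cost of a bounded multiplicative factor applied to its length; this follows from the explicit formula for $\ell_h$, which linearly interpolates between $d_N$ and $d_N(f(\cdot),f(\cdot))$. After this normalization the combinatorics of $\gamma$ is encoded by the number of identifications $(z,r(z))\sim(f(z),0)$ crossed on the left- and right-hand sides of $\gamma$.

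Second, assuming the total length of $\gamma$ is smaller than $\tfrac12\min_N r$, the vertical segments are too short to cross more than one identification on either side, so the only possibilities are: no crossing, one crossing on the left, or one crossing on the right, which correspond precisely to the three alternatives in the definition of $d_\pi$. In each case the sum of horizontal and vertical contributions of $\gamma$ dominates the corresponding term in $d_\pi$ up to a universal constant. Paths of length at least $\tfrac12\min_N r$ are handled directly, since $d_\pi(p,q)$ is uniformly bounded above on $N^r\times N^r$ by a constant depending only on $\diam N$, $\max_N r$ and the Lipschitz constants of $f$ and $f^{-1}$.

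Taking $c$ to be the maximum of the constants arising in the two directions yields the proposition. The main obstacle is the bookkeeping in the normalization step: one must verify that pushing horizontal segments to base level while rerouting the adjacent vertical pieces preserves the total length up to a universal factor, and that the resulting canonical decomposition correctly matches each of the three branches of the $\min$ in the definition of $d_\pi$. Once this reduction is justified, the remaining estimates reduce to routine triangle-inequality and Lipschitz computations in $N$.
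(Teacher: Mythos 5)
The paper does not prove this proposition: it is quoted directly from Barreira's monograph and used as a black box, so there is no in-paper argument against which to compare yours. Your overall plan (build explicit paths for each branch of $d_\pi$, then classify roof crossings of a short path for the converse) is the natural one and is the standard route, but there is a genuine gap in the very first case of the upper bound.

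You assert that from $(x,t)$, the horizontal segment at parameter $\tau=t/r(x)$ lands you at $(y,\tau r(y))$ and that you can then close with ``a vertical segment of length $|t-s|$''. But $\tau r(y)=t\,r(y)/r(x)\neq t$ in general, and the scaled length of the vertical segment from $(y,\tau r(y))$ to $(y,s)$ is
\[
\ell_v=\Bigl|\frac{t}{r(x)}-\frac{s}{r(y)}\Bigr|
   \;\le\; \frac{|t-s|}{r(y)}\;+\;t\,\Bigl|\frac{1}{r(x)}-\frac{1}{r(y)}\Bigr|,
\]
whose second term involves $|r(x)-r(y)|$ and cannot be bounded by a multiple of $d_N(x,y)$ when $r$ is merely continuous, which is the standing hypothesis of Subsection~\ref{s:semiflows}. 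In fact the proposition is false without a Lipschitz hypothesis on $r$: take $t=s$ and $r$ H\"older-$\tfrac12$ but not Lipschitz near a point; then $d_\pi\asymp d_N(x,y)$ while every admissible path in $N^r$ has length at least of order $\sqrt{d_N(x,y)}$, so $d_{N^r}/d_\pi\to\infty$. Thus the statement as transcribed carries an implicit Lipschitz hypothesis on $r$ (present in the cited monograph, silently omitted here), and your bound ``$|t-s|/\min_N r$'' uses it without saying so. The same issue recurs in your normalization step for the reverse inequality when you convert the scaled vertical lengths $|\tau_1-\tau_2|$ back into actual height differences $|t_1-t_2|$. To close the gap: add the hypothesis that $r$ is Lipschitz (with respect to $d_N$), replace the claimed $|t-s|$ by the corrected expression above, and carry the Lipschitz constant of $r$ through all the estimates; with that, your outline goes through.
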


\begin{remark}\label{rem:measures}
Given a $(X^{t})_{t}$-invariant measure $\mu$ there exists an $f$-invariant probability $\tilde{\mu}$ on $N$ such that $\mu=\tilde \mu \times Leb /\int r\, d\tilde \mu$. It is well known that if $r$ is bounded away from zero then
$\tilde \mu \mapsto \tilde \mu \times Leb /\int r\, d\tilde \mu$
is a bijection between the space $\mathcal{M}_{1}(N,f)$ of $f$-invariant probabilities and the space $\mathcal{M}_{1}(N^{r},(X^{t})_{t})$
of $(X^{t})_{t}$-invariant probabilities.
\end{remark}

\section{Ergodic optimization for bilateral subshifts}\label{sec:bilateral}

\subsection{Symbolic dynamics}

Let $\Sigma_{n}=\{1,\ldots,n\}^{\mathbb{Z}}$ be the space of all
sequences $\underline{x}=\{x_{i}\}_{i=-\infty}^{\infty}$ with $x_{i}\in\{1,\ldots,n\}$
for all $i\in\mathbb{Z}$. We define the \emph{(left) shift} homeomorphism
$\sigma:\Sigma_{n}\to\Sigma_{n}$ by $\sigma\left(\{x_{i}\}_{i=-\infty}^{\infty}\right)=\{x_{i+1}\}_{i=-\infty}^{\infty}$.
If $\mathbf{R}$ is a $n\times n$ transition matrix formed by 0's and 1's, and
\[
\Sigma_{\mathbf{R}}=\left\{ \underline{x}\in\Sigma_{n}:\mathbf{R}_{x_{i}x_{i+1}}=1\ \text{for all}\ i\in\mathbb{Z}\right\} ,
\]
we say $\sigma:\Sigma_{\mathbf{R}}\to\Sigma_{\mathbf{R}}$
a \emph{subshift of finite type} (determined by $\mathbf{R}$). We denote by
$\Sigma_{n}^{+}=\{1,\ldots,n\}^{\mathbb{N}}$
the space of all sequences $\underline{x}=\{x_{i}\}_{i=0}^{\infty}$
with $x_{i}\in\{1,\ldots,n\}$ for all $i\in\mathbb{N}$ and define
the \emph{one-sided (left) shift} homeomorphism $\sigma:\Sigma_{n}^{+}\to\Sigma_{n}^{+}$
by $\sigma\left(\{x_{i}\}_{i=0}^{\infty}\right)=\{x_{i+1}\}_{i=0}^{\infty}$.
One-sided subshifts of finite type are defined analogously.
It is easy to check that a one-sided subshift of finite type is a Ruelle expanding map.

For $\varphi:\Sigma_{\mathbf{R}}\to\mathbb{R}$ continuous we define
the \emph{variation of $\varphi$ on $k$-cylinders} by
\[
\text{var}_{k}\varphi=\sup\{|\varphi(\underline{x})-\varphi(\underline{y})|:x_{i}=y_{i}\ \text{for all}\ |i|\le k\}.
\]
Let $\mathscr{F}_{\mathbf{R}}$ be the family of all continuous observables  $\varphi:\Sigma_{\mathbf{R}}\to\mathbb{R}$
for which there exists positive constants $b$ and $c\in(0,1)$ so that
$\text{var}_{k}\varphi\le bc^{k}$ for all $k\ge0$.

\begin{remark}
\label{rem:F_R_Holder} For any $\beta\in(0,1)$ one can define the
metric $d_{\beta}$ on $\Sigma_{\mathbf{R}}$ by $d_{\beta}(\underline{x},\underline{y})=\beta^{N}$
where $N$ is the largest non-negative integer with $x_{i}=y_{i}$
for every $|i|<N$. Then $\mathscr{F}_{\mathbf{R}}$ is the set of
functions which have a positive H\"{o}lder exponent with respect
to $d_{\beta}$. In fact, for $\underline{x},\underline{y}\in\Sigma_{\mathbf{R}}$
there is $N\in\mathbb{N}$ such that $d_{\beta}(x,y)=\beta^{N}$,
this means that $\underline{x}$ and $\underline{y}$ are in the same $N$-cylinder
and any $\varphi\in\mathscr{F}_{\mathbf{R}}$ satisfies
$
\text{var}_{N}\varphi  \le bc^{N},
$
which implies
$
|\varphi(\underline{x})-\varphi(\underline{y})|\le bc^{N}.
$
Choosing $\alpha\in(0,1)$ such that $c\le\beta^{\alpha}$ we have
$
|\varphi(\underline{x})-\varphi(\underline{y})|  \le
	b(\beta^{N})^{\alpha}
  =bd_{\beta}(\underline{x},\underline{y})^{\alpha}.
$
This means that $\varphi$ is $\alpha$-H\"{o}lder
in the metric $d_{\beta}$.
\end{remark}

\subsection{From unilateral to bilateral subshifts of finite type}\label{unilateral-bilateral}

Here we
build over the results for expanding maps in Subsection~\ref{LE} and the following classical result
(see e.g. \cite[Lemma 1.6]{bowen1975equilibrium})
on solutions of the cohomological equation, to
consider the ergodic optimization of bilateral subshifts of finite type.

\begin{lemma}
 \label{lemBowen1.6homology}If $\varphi\in\mathscr{F}_{\mathbf{R}}$,
then there exists a continuous function $u=u_\varphi:\Sigma_{\mathbf{R}}\to\mathbb{R}$
such that $\psi:=\varphi+u\circ\sigma-u\in\mathscr{F}_{\mathbf{R}}$
and $\psi(\underline{x})=\psi(\underline{y})$ whenever $x_{i}=y_{i}$
for all $i\ge0$.
\end{lemma}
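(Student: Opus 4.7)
The plan is to mimic Bowen's classical reduction from two-sided to one-sided subshifts by constructing an explicit coboundary that ``forgets'' the past coordinates. First, for each symbol $a\in\{1,\ldots,n\}$ I fix once and for all a canonical sequence $\underline{a}^{\ast}\in\Sigma_{\mathbf R}$ with $(\underline{a}^{\ast})_0=a$ (possible because every symbol appears in some admissible word). Using these I define a retraction $r:\Sigma_{\mathbf R}\to\Sigma_{\mathbf R}$ by
$$
r(\underline{x})_i=\begin{cases} x_i, & i\ge 0,\\ (\underline{x_0}^{\ast})_i, & i<0,\end{cases}
$$
which lies in $\Sigma_{\mathbf R}$ (the canonical past of $\underline{x_0}^{\ast}$ ends at the same symbol $x_0$ that starts the kept forward part), agrees with $\underline{x}$ on all $i\ge 0$, and enjoys two key identities: $r\circ r=r$, and $r(\sigma\underline{x})=r(\sigma r(\underline{x}))$ (both $\sigma\underline{x}$ and $\sigma r(\underline{x})$ share the $0$-th coordinate $x_1$ and the same forward tail).

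Next I would set
$$
u(\underline{x})=\sum_{k=0}^{\infty}\big[\varphi(\sigma^{k}\underline{x})-\varphi(\sigma^{k}r(\underline{x}))\big].
$$
Since $\sigma^{k}\underline{x}$ and $\sigma^{k}r(\underline{x})$ agree in every coordinate $j\ge -k$, the $k$-th summand is bounded in absolute value by $\text{var}_{k}\varphi\le bc^{k}$, so the series converges uniformly and $u$ is continuous. Using $r\circ r=r$ one has $u(\underline{x})-u(r(\underline{x}))=\sum_{k\ge 0}[\varphi(\sigma^{k}\underline{x})-\varphi(\sigma^{k}r(\underline{x}))]$, and similarly, using $r(\sigma\underline{x})=r(\sigma r(\underline{x}))$,
$$
u(\sigma\underline{x})-u(\sigma r(\underline{x}))=\sum_{k\ge 0}\big[\varphi(\sigma^{k+1}\underline{x})-\varphi(\sigma^{k+1}r(\underline{x}))\big].
$$
Subtracting these two identities telescopes to $u(\sigma\underline{x})-u(\sigma r(\underline{x}))-u(\underline{x})+u(r(\underline{x}))=-[\varphi(\underline{x})-\varphi(r(\underline{x}))]$, which is precisely the statement that $\psi:=\varphi+u\circ\sigma-u$ satisfies $\psi(\underline{x})=\psi(r(\underline{x}))$. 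Since $r(\underline{x})$ depends only on the forward part of $\underline{x}$, this gives $\psi(\underline{x})=\psi(\underline{y})$ whenever $x_i=y_i$ for all $i\ge 0$.

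It remains to check that $\psi\in\mathscr{F}_{\mathbf R}$. Given sequences agreeing on $|i|\le k$ I would bound $|u(\underline{x})-u(\underline{y})|$ by splitting the defining series at the cut-off $N=\lfloor k/2\rfloor$: in the block $0\le j\le N$ the four points $\sigma^{j}\underline{x}$, $\sigma^{j}\underline{y}$, $\sigma^{j}r(\underline{x})$, $\sigma^{j}r(\underline{y})$ still share cylinders of radius at least $k-N\ge k/2$ (note that $x_0=y_0$ forces $r(\underline{x})$ and $r(\underline{y})$ to have identical negative tails and to agree on $0\le i\le k$), so those summands are controlled by $\text{var}_{k/2}\varphi\le bc^{k/2}$; the tail $j>N$ is bounded termwise by $\text{var}_{j}\varphi\le bc^{j}$ and summed geometrically. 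This yields $\text{var}_{k}u\le b' c'^{k/2}$ and hence $\text{var}_{k}\psi\le b'' c''^{k/2}$, placing $\psi$ in $\mathscr F_{\mathbf R}$. The genuine subtlety of the argument is precisely this last step: the retraction $r$ must be arranged so that coincidence of $\underline{x}$ and $\underline{y}$ on $|i|\le k$ forces $r(\underline{x})$ and $r(\underline{y})$ to coincide on as large a range as possible, which is what the choice $r(\underline{x})_i=(\underline{x_0}^{\ast})_i$ depending only on $x_0$ achieves.
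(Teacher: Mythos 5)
Your proposal is essentially the paper's own proof: you construct the same retraction $\rho$ (your $r$) by fixing a canonical past for each symbol, define the same coboundary series $u$, and carry out the same splitting of the variation sum at $[k/2]$ to show $u\in\mathscr F_{\mathbf R}$. The only cosmetic difference is how you establish that $\psi$ depends only on forward coordinates: the paper reindexes the series and computes $\psi(\underline{x})$ explicitly, whereas you package the algebra into the two retraction identities $r\circ r=r$ and $r(\sigma\underline{x})=r(\sigma r(\underline{x}))$ (both of which do hold) and then telescope — a slightly cleaner arrangement of the same computation, not a different route.
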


\begin{proof}
Although this is well known lemma, we include its proof for the reader's convenience,
as we shall need the expression for the solution of the cohomological equation.
For each $1\le t\le n$ pick $\{a_{k,t}\}{}_{k=-\infty}^{\infty}\in \Sigma_{R}$
with $a_{0,t}=t$. Define $\rho:\Sigma_{R}\rightarrow\Sigma_{R}$
by $\rho(\underline{x})=\underline{x}^{*}$, where
$x_{k}^{*}=x_{k} \, \text{ for } \,k >0$ and $x_{k}^{*}= a_{k,x_{0}}  \,\text{for }\, k\le0$.
Let
\begin{equation}\label{eq:cobu}
u(\underline{x})=\sum_{j=0}^{\infty}(\varphi(\sigma^{j}(\underline{x}))-\varphi(\sigma^{j}(\rho(\underline{x})))).
\end{equation}
Note that
$
|\varphi(\sigma^{j}(\underline{x}))-\varphi(\sigma^{j}\rho(\underline{x})))|\le\text{var}_{j}\varphi\le b\alpha^{j}
$
because $\sigma^{j}(\underline{x})$ and $\sigma^{j}(\rho(\underline{x}))$
agree in places from $-j$ to $+\infty$.
As $\sum_{j=0}^{\infty}b\alpha^{j}<\infty$, the function $u$ is well defined
and continuous. If $x_{i}=y_{i}$ for all $|i|\le n$, then, for $j\in[0,n]$,
$
|\varphi(\sigma^{j}(\underline{x}))-\varphi(\sigma^{j}(\underline{y}))|\le\text{var}_{n-j}\varphi\le b\alpha^{n-j}
$
and
$
|\varphi(\sigma^{j}\rho((\underline{x})))-\varphi(\sigma^{j}\rho((\underline{y})))|\le b\alpha^{n-j}.
$
Hence
\begin{align*}
|u(\underline{x})-u(\underline{y})| & \le\sum_{j=0}^{\left[\frac{n}{2}\right]}|\varphi(\sigma^{j}(\underline{x}))-\varphi(\sigma^{j}(\underline{y}))+\varphi(\sigma^{j}\rho((\underline{x})))-\varphi(\sigma^{j}\rho((\underline{y})))|+2\sum_{j>\left[\frac{n}{2}\right]}\alpha^{j}\\
 & \le2b\left(\sum_{j=0}^{\left[\frac{n}{2}\right]}\alpha^{n-j}+\sum_{j>\left[\frac{n}{2}\right]}\alpha^{j}\right)
  \le\frac{4b\alpha{}^{\left[\frac{n}{2}\right]}}{1-\alpha}.
\end{align*}
This shows that $u\in\mathscr{F}_{\mathbf{R}}$. Hence the function $\psi:=\varphi+u\circ\sigma-u$
belongs to $\mathscr{F}_{\mathbf{R}}$ and it satisfies
\begin{align*}
\psi(\underline{x}) & =\varphi(\underline{x})+\sum_{j=-1}^{\infty}\left(\varphi(\sigma^{j+1}\rho((\underline{x})))-\varphi(\sigma^{j+1}(\underline{x}))\right)+\sum_{j=0}^{\infty}\left(\varphi(\sigma^{j+1}(\underline{x}))-\varphi(\sigma^{j}(\rho(\underline{x})))\right)\\
 & =\varphi(\sigma\rho((\underline{x})))+\sum_{j=0}^{\infty}\left(\varphi(\sigma^{j+1}(\underline{x}))-\varphi(\sigma^{j}(\rho(\underline{x})))\right).
\end{align*}
The final expression in the right-hand side above depends only on $\{x_{i}\}_{i=0}^{\infty}$,
as desired.
\end{proof}

Now we analyze the coboundary map \eqref{eq:cobu} as a function of the observable. By Remark \ref{rem:F_R_Holder},
up to a change of metric we have that the space $\mathscr{F}_{\mathbf{R}}$ coincides with the space of H\"older continuous
observables.
Hence we have the following:
\begin{lemma}
\label{lemHoloSubmersion} Let $D^{+}$ be the set of observables
$\psi\in C^{\alpha}(\Sigma_{\mathbf{R}},\mathbb{R})$ so that $\psi(\underline{x})=\psi(\underline{y})$ whenever $ x_{i}=y_{i}\ \text{for all}\ i\ge0$.
Then the map $\Xi:C^{\alpha}(\Sigma_{\mathbf{R}},\mathbb{R})\to D^{+}$
given by $\Xi(\varphi)=\varphi+u\circ\sigma-u$, where $u=u_{\varphi}:\Sigma_{\mathbf{R}}\to\mathbb{R}$
is given by Lemma \ref{lemBowen1.6homology}, is a submersion.
\end{lemma}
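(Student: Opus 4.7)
The plan is to recognize $\Xi$ as a continuous linear map that admits a continuous linear right inverse, which is precisely what it means for a linear map between Banach spaces to be a submersion (surjective with topologically complemented kernel). First, I would observe from the explicit formula \eqref{eq:cobu} that $u_\varphi$ depends linearly on $\varphi$, so $\Xi(\varphi)=\varphi+u_\varphi\circ\sigma-u_\varphi$ is a bounded linear operator on $C^\alpha(\Sigma_{\mathbf{R}},\mathbb{R})$; boundedness is essentially read off from the supremum and variation estimates already carried out in the proof of Lemma \ref{lemBowen1.6homology}, where $\|u_\varphi\|_\infty$ and $\mathrm{var}_n u_\varphi$ are controlled by the $C^\alpha$-seminorm of $\varphi$ times geometric constants depending only on the rate in $\mathscr{F}_{\mathbf{R}}$.

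The heart of the argument is to show that $\Xi$ acts as the identity on $D^+$. Let $\psi\in D^+$. By the very definition of $\rho$, the sequence $\rho(\underline x)$ agrees with $\underline x$ in every non-negative coordinate, and therefore for each $j\ge 0$ the sequences $\sigma^j(\underline x)$ and $\sigma^j(\rho(\underline x))$ still agree in all positions $i\ge 0$. Since $\psi$ depends only on non-negative coordinates, every summand in \eqref{eq:cobu} vanishes, so $u_\psi\equiv 0$ and hence $\Xi(\psi)=\psi$. In particular, the natural inclusion $\iota:D^+\hookrightarrow C^\alpha(\Sigma_{\mathbf{R}},\mathbb{R})$ is a continuous linear right inverse of $\Xi$.

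From here the conclusion is immediate: $\Xi$ is a bounded linear surjection of Banach spaces onto $D^+$ with $\Xi\circ\iota=\mathrm{id}_{D^+}$, so $C^\alpha(\Sigma_{\mathbf{R}},\mathbb{R})=\ker\Xi\oplus\iota(D^+)$ is a topological direct sum. Because $\Xi$ is linear, its derivative at every $\varphi$ equals $\Xi$ itself and is therefore everywhere a surjective bounded linear map with topologically complemented kernel, which is exactly what it means for $\Xi$ to be a submersion between Banach spaces. The only non-routine step is the vanishing $u_\psi\equiv 0$ for $\psi\in D^+$, which reduces to the elementary observation that $\rho$ preserves non-negative coordinates; I do not anticipate any further obstacle.
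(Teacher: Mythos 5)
Your proof is correct and takes essentially the same route as the paper — exhibit $\Xi$ as a linear map and reduce to surjectivity — but it is actually \emph{more} careful than the paper's argument. The paper disposes of surjectivity with the phrase ``$\Xi$ is surjective, by construction,'' without identifying what in the construction forces this; you supply the missing observation, namely that $\rho$ fixes every non-negative coordinate, so for $\psi\in D^{+}$ every summand in \eqref{eq:cobu} vanishes, $u_\psi\equiv 0$, and $\Xi|_{D^{+}}=\mathrm{id}_{D^{+}}$. This not only gives surjectivity but hands you the continuous right inverse $\iota:D^{+}\hookrightarrow C^{\alpha}(\Sigma_{\mathbf{R}},\mathbb{R})$ and hence the topological splitting $C^{\alpha}=\ker\Xi\oplus\iota(D^{+})$, which the literal definition of a Banach-space submersion requires and the paper also leaves implicit. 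One side remark: the variation estimate in Lemma~\ref{lemBowen1.6homology} gives $\mathrm{var}_n u_\varphi\lesssim c^{[n/2]}$, so strictly speaking $u_\varphi$ (and hence $\Xi(\varphi)$) may only be $(\alpha/2)$-H\"older; this exponent loss is inherent to the paper's identification of $C^{\alpha}$ with $\mathscr{F}_{\mathbf{R}}$ (see Remark~\ref{rem:F_R_Holder}) and is not a defect you introduced, but your phrase ``bounded linear operator on $C^\alpha$'' should be read modulo that identification.
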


\begin{proof}
A simple computation shows that
the map $u:C^{\alpha}(\Sigma_{\mathbf{R}},\mathbb{R})\to C^{\alpha}(\Sigma_{\mathbf{R}},\mathbb{R})$
given by
$
u(\varphi)(\underline{x})=\sum_{j=0}^{\infty}(\varphi(\sigma^{j}(\underline{x}))-\varphi(\sigma^{j}(\rho(\underline{x}))))
$
is well defined and linear,
hence $\Xi:C^{\alpha}(\Sigma_{\mathbf{R}},\mathbb{R})\to D^{+}$ is also linear.
As $\Xi$ is surjective, by construction,
we conclude that $\Xi:C^{\alpha}(\Sigma_{\mathbf{R}},\mathbb{R})\to D^{+}$ is a submersion.
\end{proof}

\begin{remark}
\label{remShiftUnilateralBilateral}
If $\varphi\in C^{\alpha}(\Sigma_{\mathbf{R}},\mathbb{R})$
the observable $\tilde \varphi \in C^{\alpha}(\Sigma_{\mathbf{R}},\mathbb{R})$
defined by
$
\tilde{\varphi}(\{x_{i}\}_{i=-\infty}^{\infty})=\varphi(\{x_{i}\}_{i=0}^{\infty})
$
is constant
along local stable leaves. Reciprocally, if $\tilde{\varphi}\in C^{\alpha}(\Sigma_{\mathbf{R}},\mathbb{R})$
satisfies $\tilde{\varphi}(\underline{x})=\tilde{\varphi}(\underline{y})$
whenever $x_{i}=y_{i}$ for all $i\ge0$, then one can associate an
observable in $C^{\alpha}(\Sigma_{\mathbf{R}}^{+},\mathbb{R})$ by
$
\varphi(\{x_{i}\}_{i=0}^{\infty})=\tilde{\varphi}(\{x_{i}\}_{i=-\infty}^{\infty}).
$
The functions in $C^{\alpha}(\Sigma_{\mathbf{R}}^{+},\mathbb{R})$
are thus identified with the subclass of $C^{\alpha}(\Sigma_{\mathbf{R}},\mathbb{R})$
formed by functions that are constant on local stable leaves. Indeed, given the identification
$
\Sigma_{\mathbf{R}}^{+}\ \ \simeq \ \ \Sigma_{\mathbf{R}}/\sim,
$
where $\underline{x}\sim\underline{y}$ if $x_{i}=y_{i}$ for all
$i\ge0$ and $\underline{x},\underline{y}\in \Sigma_{\mathbf{R}}$,
one can identify
$
C^{\alpha}(\Sigma_{\mathbf{R}}^{+},\mathbb{R})\ \ \simeq\ \ C^{\alpha}(\Sigma_{\mathbf{R}},\mathbb{R})/\sim\ \ \simeq\ \ D^{+}.
$
\end{remark}

The  next proposition is the main result in this subsection, and it extends Theorem~\ref{thmContreras}
for bilateral subshifts of finite type.

\begin{proposition}
\label{prop:open_dense_shift_bilateral} There is an open and dense
subset of $\mathcal{R}\subset C^{\alpha}(\Sigma_{\mathbf{R}},\mathbb{R})$
such that every $\varphi\in\mathcal{R}$ admits a unique $\varphi$-maximizing
measure and it is supported on a periodic orbit of $\sigma:\Sigma_{\mathbf{R}}\to\Sigma_{\mathbf{R}}$.
\end{proposition}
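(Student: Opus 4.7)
The plan is to reduce the proposition to its unilateral counterpart (Theorem~\ref{thmContreras}) by combining the cohomological reduction $\Xi$ of Lemma~\ref{lemBowen1.6homology} with the identification $D^+\simeq C^\alpha(\Sigma_{\mathbf{R}}^+,\mathbb{R})$ of Remark~\ref{remShiftUnilateralBilateral}. The starting observation is that $\varphi$ and $\Xi(\varphi)=\varphi+u_\varphi\circ\sigma-u_\varphi$ differ by a coboundary, so $\int\varphi\,d\mu=\int\Xi(\varphi)\,d\mu$ for every $\sigma$-invariant probability $\mu$. Consequently $\varphi$ and $\Xi(\varphi)$ share the same maximum value and the same set of maximizing measures, so it suffices to produce an open and dense subset of $D^+$ of observables with unique periodic maximizing measures and pull it back by $\Xi$.

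Next, let $\pi:\Sigma_{\mathbf{R}}\to\Sigma_{\mathbf{R}}^+$ denote the projection onto the non-negative coordinates. Since the bilateral shift is the natural extension of the one-sided shift, $\pi_*:\mathcal{M}_1(\Sigma_{\mathbf{R}},\sigma)\to\mathcal{M}_1(\Sigma_{\mathbf{R}}^+,\sigma)$ is a bijection preserving ergodicity; moreover, for every $\psi\in D^+$ with associated $\hat\psi\in C^\alpha(\Sigma_{\mathbf{R}}^+,\mathbb{R})$ (in the sense of Remark~\ref{remShiftUnilateralBilateral}) one has $\int\psi\,d\mu=\int\hat\psi\,d(\pi_*\mu)$. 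Therefore the maximizing measures of $\psi$ on $\Sigma_{\mathbf{R}}$ correspond bijectively under $\pi_*$ to those of $\hat\psi$ on $\Sigma_{\mathbf{R}}^+$. As a one-sided subshift of finite type is Ruelle expanding, Theorem~\ref{thmContreras} supplies an open and dense set $\hat{\mathcal{O}}\subset C^\alpha(\Sigma_{\mathbf{R}}^+,\mathbb{R})$ of observables with a unique maximizing measure supported on a periodic orbit; via the identification of Remark~\ref{remShiftUnilateralBilateral} this yields an open and dense subset $\mathcal{O}'\subset D^+$ with the analogous property.

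Finally, Lemma~\ref{lemHoloSubmersion} tells us that $\Xi:C^\alpha(\Sigma_{\mathbf{R}},\mathbb{R})\to D^+$ is a continuous linear surjection; since $D^+$ is a closed subspace of $C^\alpha(\Sigma_{\mathbf{R}},\mathbb{R})$ (being cut out by the closed condition of being constant on local stable leaves), both spaces are Banach and the open mapping theorem yields that $\Xi$ is open. A continuous surjective open map pulls any open and dense subset back to an open and dense subset, so $\mathcal{R}:=\Xi^{-1}(\mathcal{O}')$ is open and dense in $C^\alpha(\Sigma_{\mathbf{R}},\mathbb{R})$. For $\varphi\in\mathcal{R}$, the unique maximizing measure $\hat\mu$ of $\widehat{\Xi(\varphi)}$ on $\Sigma_{\mathbf{R}}^+$ is supported on a periodic orbit, and its $\pi_*$-preimage is the unique maximizing measure of $\varphi$ on $\Sigma_{\mathbf{R}}$.

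The main subtlety, although not a deep one, lies in checking that the natural-extension bijection $\pi_*^{-1}$ sends a Dirac-orbit measure on a periodic orbit of $\sigma:\Sigma_{\mathbf{R}}^+\to\Sigma_{\mathbf{R}}^+$ to a Dirac-orbit measure on a periodic orbit of $\sigma:\Sigma_{\mathbf{R}}\to\Sigma_{\mathbf{R}}$; this holds because a one-sided periodic sequence of period $k$ admits a unique bi-infinite periodic lift of period $k$, so the corresponding $\sigma$-invariant lifted probability on $\Sigma_{\mathbf{R}}$ is again atomic, supported on that orbit.
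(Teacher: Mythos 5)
Your proposal is correct and follows essentially the same route as the paper: reduce to the one-sided shift via Contreras' theorem, transport the open-and-dense set across the identification $D^+\simeq C^\alpha(\Sigma_{\mathbf R}^+,\mathbb R)$ using the natural-extension bijection on invariant measures, and pull it back through the coboundary map $\Xi$. The only cosmetic difference is that you justify the open-and-dense pullback by invoking the open mapping theorem explicitly, and you invoke the natural-extension correspondence abstractly where the paper instead realizes the lift $\mu\mapsto\tilde\mu$ concretely via Bowen's $\varphi^*$-construction; these are equivalent.
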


\begin{proof}
Since a transitive one-sided subshift of finite type is a Ruelle expanding map we can apply Theorem \ref{thmContreras}
to $\sigma:\Sigma_{\mathbf{R}}^{+}\to\Sigma_{\mathbf{R}}^{+}$ and
obtain an open and dense set $\mathcal{O}\subset C^{\alpha}(\Sigma_{\mathbf{R}}^{+},\mathbb{R})$
such that for each $\varphi\in\mathcal{O}$ there is a single $\varphi$-maximizing
measure and it is supported on a periodic orbit. By the isomorphism in Remark \ref{remShiftUnilateralBilateral},
there exists
an open and dense set $\mathcal{O}^{+}\subset D^{+}$ such that every $\varphi\in\mathcal{O}^{+}$
has a single $\varphi$-maximizing measure and it is supported on
a periodic orbit. In fact, for every $\sigma$-invariant probability $\mu$
in $\Sigma_{\mathbf{R}}^{+}$ there is a natural way to associate a unique invariant probability $\tilde \mu$ on $\Sigma_{R}$.
Following \cite[Section C]{bowen1975equilibrium}, for $\varphi\in C^{0}(\Sigma_{\mathbf{R}}^{+},\mathbb{R})$
define $\varphi^{*}\in C^{0}(\Sigma_{\mathbf{R}}^{+},\mathbb{R})$ by
$$
\varphi^{*}\left(\{x_{i}\}_{i=0}^{\infty}\right):=\min\{\varphi(\underline{y}):\underline{y}\in \Sigma_{\mathbf{R}},y_{i}=x_{i}\ \text{for all}\ i\ge0\}.
$$
Notice that for $m,n\ge0$ one has
$
\|(\varphi\circ\sigma^{n})^{*}\circ\sigma^{m}-(\varphi\circ\sigma^{m+n})^{*}\|\le\text{var}_{n}\tilde{\varphi}.
$
Hence
\begin{align*}
\left|\int(\varphi\circ\sigma^{n})^{*}d\mu-\int(\varphi\circ\sigma^{n+m})^{*}d\mu\right| & =\left|\int(\varphi\circ\sigma^{n})^{*}\circ\sigma^{m}d\mu-\int(\varphi\circ\sigma^{n+m})^{*}d\mu\right|\\
 & \le\text{var}_{n}\tilde{\varphi}
\end{align*}
tends to zero as $n\to\infty$ (because $\varphi$ is continuous).
This proves that the latter is a Cauchy sequence and that the limit
$
\int\tilde{\varphi}d\tilde{\mu}=\lim_{n\to\infty}\int(\varphi\circ\sigma^{n})^{*}d\mu
$
exists. It is straightforward to check that
$\tilde{\mu}\in C^{0}(\Sigma_{\mathbf{R}},\mathbb{R})^{*}$. By the
Riesz Representation Theorem we see that $\tilde{\mu}$ defines a
probability measures on $\Sigma_{\mathbf{R}}$. Note that
$
\int\varphi\circ\sigma d\tilde{\mu}=\lim_{n\to\infty}\int(\varphi\circ\sigma^{n+1})^{*}d\mu=\int\tilde{\varphi}d\tilde{\mu}
$
for every continuous $\varphi$,
proving that $\tilde{\mu}$ is $\sigma$-invariant. Also $\int\tilde{\varphi}d\tilde{\mu}=\int\varphi d\mu$
for $\varphi\in C^{0}(\Sigma_{\mathbf{R}}^{+},\mathbb{R})$.

Note that if $\psi=\varphi+u\circ\sigma-u$, then $M(\varphi,\sigma)=M(\psi,\sigma)$
and the maximizing measures for $\varphi$ and $\psi$ are the same.
Hence, by Lemma \ref{lemHoloSubmersion} the pre-image $\Xi^{-1}(\mathcal{O}^{+})$
is an open and dense subset of $C^{\alpha}(\Sigma_{\mathbf{R}},\mathbb{R})$,
and for every $\varphi\in\Xi^{-1}(\mathcal{O}^{+})$ there
exists a single $\varphi$-maximizing measure and it is supported on
a periodic orbit.
\end{proof}

\section{Ergodic optimization for suspension semiflows and applications}\label{sec:reducao}

This section contains some of the key reduction arguments explored in the paper.
Throughout this section let $(N,d)$ be a compact metric space,
let $f: N \to N$ be a continuous map and $r:N\to\mathbb{R}_+$ be a continuous roof function bounded away
from zero.
Let $(X^{t})_{t\in\mathbb{R}}$ be the suspension flow over $f$ with height function $r$.
 Given $\mu\in\mathcal{M}_{1}(N^{r},(X^{t})_{t})$ denote by
$\bar{\mu}\in\mathcal{M}_{1}(N,f)$ the $f$-invariant probability measure induced by $\mu$, and recall that
$\mu=\frac{\bar{\mu}\times Leb}{\int_N r\,d\bar{\mu}}$.

\subsection{Reduction to the ergodic optimization of the Poincar\'e map}\label{sec:reduc1}

\begin{lemma}\label{eq:intphi=00003Dintphibar/intr}
Fix $\alpha >0$. If $r$ is a continuous (resp. $\al$-H\"older) roof function
and $\Phi:N^{r}\to\mathbb{R}$ is a continuous (resp. $\al$-H\"older) observable then
the induced observable $\varphi: N\to\mathbb{R}$ defined by
$
\varphi(x)=\int_{0}^{r(x)}\Phi(x,s)\,ds,
$
is a continuous (resp. $\al$-H\"older) observable and
$
\int_{{N^{r}}}\Phi \,d\mu=\frac{\int_N\varphi \,d\bar{\mu}}{\int_{N} r \,d\bar{\mu}}.
$
\end{lemma}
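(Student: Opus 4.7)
The plan is to verify the three assertions separately: continuity of the induced observable $\varphi$, its $\alpha$-H\"older regularity under the corresponding hypotheses, and the integral identity. The first two follow from a triangle-inequality decomposition of $\varphi(x)-\varphi(y)$; the third is an immediate consequence of Fubini's theorem combined with the description of $\mu$ given in Remark~\ref{rem:measures}.

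First I would prove continuity. For $x,y\in N$, assuming without loss of generality that $r(y)\le r(x)$, I decompose
\[
\varphi(x)-\varphi(y) \;=\; \int_{0}^{r(y)}\bigl[\Phi(x,s)-\Phi(y,s)\bigr]\,ds \;+\; \int_{r(y)}^{r(x)}\Phi(x,s)\,ds.
\]
The first integral is bounded by $\|r\|_\infty \cdot \sup_{s\in[0,\|r\|_\infty]}|\Phi(x,s)-\Phi(y,s)|$, which tends to zero as $y\to x$ by uniform continuity of $\Phi$ on the compact space $N^{r}$. The second integral is bounded by $\|\Phi\|_\infty\,|r(x)-r(y)|$, which vanishes by continuity of $r$. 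Hence $\varphi\in C^{0}(N,\mathbb{R})$.

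For the H\"older case I quantify the same decomposition. Interpreting the H\"older seminorm of $\Phi$ with respect to the Bowen-Walters distance $d_{N^r}$, Proposition~\ref{prop:cdpi<dMr<cdpi(2.1)} together with the trivial bound $d_{\pi}((x,s),(y,s))\le d_{N}(x,y)$ yields $d_{N^r}((x,s),(y,s))\le c\,d_{N}(x,y)$, so $|\Phi(x,s)-\Phi(y,s)|\le [\Phi]_\alpha c^\alpha d_{N}(x,y)^\alpha$ uniformly in $s$. Combining this with the H\"older bound $|r(x)-r(y)|\le [r]_\alpha d_{N}(x,y)^\alpha$ and $|\Phi|\le \|\Phi\|_\infty$ in the two pieces of the decomposition gives
\[
|\varphi(x)-\varphi(y)|\le \bigl(\|r\|_\infty [\Phi]_\alpha c^\alpha + \|\Phi\|_\infty [r]_\alpha\bigr)\, d_{N}(x,y)^\alpha,
\]
so $\varphi\in C^{\alpha}(N,\mathbb{R})$. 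The integral identity is then immediate: by Remark~\ref{rem:measures} one has $\mu=(\bar\mu\times\mathrm{Leb})/\int r\,d\bar\mu$ on the quotient $N^r$, and Fubini's theorem yields
\[
\int_{N^r}\Phi\,d\mu \;=\; \frac{1}{\int r\,d\bar\mu}\int_{N}\Bigl(\int_{0}^{r(x)}\Phi(x,s)\,ds\Bigr)\,d\bar\mu(x) \;=\; \frac{\int_N\varphi\,d\bar\mu}{\int_N r\,d\bar\mu}.
\]

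The only delicate point is the H\"older step, where one must pass from the H\"older constant of $\Phi$ measured with the Bowen-Walters metric on $N^r$ to an estimate in terms of the base metric $d_N$; this is precisely where the metric comparison of Proposition~\ref{prop:cdpi<dMr<cdpi(2.1)} is used (and where the underlying assumption that $f$ is Lipschitz with Lipschitz inverse enters implicitly, as is the case in all applications in the paper). Apart from this, the argument is routine.
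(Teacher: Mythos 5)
Your proposal is correct and follows essentially the same path as the paper: the same decomposition of $\varphi(x)-\varphi(y)$ into two pieces, the same use of Proposition~\ref{prop:cdpi<dMr<cdpi(2.1)} together with the observation that $d_{\pi}((x,s),(y,s))\le d_N(x,y)$ to transfer the H\"older estimate from $N^r$ to $N$, and the same Fubini computation with $\mu=\bar\mu\times\mathrm{Leb}/\int r\,d\bar\mu$ for the integral identity. The only difference is cosmetic ordering (you prove regularity before the integral identity, the paper does the reverse), and your version is if anything slightly cleaner in tracking the constant $\|r\|_\infty$ explicitly.
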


\begin{proof}
Since $\mu=\frac{\bar{\mu}\times Leb}{\int_N r\,d\bar{\mu}}$ then
\begin{align*}
\int_{N^r} \Phi \,d\mu
& =\int\Phi\circ\chi_{N^r}\, d\mu
  =\frac{1}{\int_{N}rd\bar{\mu}}\int_{N \times\mathbb{R}_+}\Phi\circ\chi_{N^r}(x,s)\,d\bar{\mu}\times Leb\\
 & =\frac{1}{\int_{N}r \,d\bar{\mu}}\int_{N}\int_{0}^{r(x)}\Phi(x,s)\,ds\,d\bar{\mu}
  =\frac{\int_{N}\varphi \,d\bar{\mu}}{\int_{N}r \,d\bar{\mu}}.
\end{align*}
It remains to establish the regularity of the induced observable.
Take $x,y\in N$ with $r(x)\ge r(y)$ (the case $r(x)\le r(y)$ is
analogous). Using that $\Phi$ and $r$ are continuous, we have
\begin{align}
\left|\varphi(x)-\varphi(y)\right| & =\left|\int_{0}^{r(x)}\Phi(x,s)ds-\int_{0}^{r(y)}\Phi(y,s)ds\right|\nonumber \\
 & \le\int_{0}^{r(y)}\left|\Phi(x,s)-\Phi(y,s)\right|ds+\int_{r(y)}^{r(x)}\Phi(x,s)ds\nonumber \\
 & \le\sup r\cdot\sup_{s\in(0,r(y))} |\Phi(x,s)-\Phi(y,s)|+\sup|\Phi|\cdot|r(x)-r(y)| \label{eq:phibarHolder(2.290)}
\end{align}
which ensures the continuity of $\varphi$.
If, in addition, $\Phi$ and $r$ are $\al$-H\"{o}lder continuous then one can use proceed to bound ~\eqref{eq:phibarHolder(2.290)}
and obtain
\begin{align}
\left|\varphi(x)-\varphi(y)\right|
& \le b\cdot\sup_{s\in(0,r(y))}d_{N^{r}}((x,s),(y,s))^{\alpha}+\sup|\Phi|\text{·}Ld_{N}(x,y)^{\alpha}\label{eq:phibarHolder(2.29)}
\end{align}
for some positive constants $L$ and $b$. It follows from Proposition \ref{prop:cdpi<dMr<cdpi(2.1)},
inequality (\ref{eq:phibarHolder(2.29)}) and the relation of $d_{N^{r}}$
with the pseudo metric $d_{\pi}$ expressed in (\ref{eqdpi(2.18)})
that
\begin{align*}
|\varphi(x)-\varphi(y)| & \le\sup|\Phi|\text{·}Ld_{N}(x,y)^{\alpha}+bc \, \sup_{s\in(0,r(y))} d_{\pi}((x,s),(y,s))^{\alpha}\\
 & \le[\sup|\Phi|\cdot L+bc]d_{N}(x,y)^{\alpha}.
\end{align*}
This proves the regularity of the induced observable in the H\"older category and finishes the proof of the lemma.
\end{proof}

The next result allow us to reduce the ergodic optimization of suspension semiflows to the ergodic optimization of
the continuous base dynamics, with respect to the induced observables.

\begin{lemma}
\label{le:red1} Let $(X^{t})_{t}:N^{r}\to N^{r}$ be a suspension
flow over a continuous map $f:N\to N$ on a compact metric space $N$
with continuous height function $r:N\to\mathbb{R}_+$. Let $\Phi:N^{r}\to\mathbb{R}$
be continuous and $\varphi:N\to\mathbb{R}$ be given by
$
\varphi(x)=\int_{0}^{r(x)}\Phi(x,s)\,ds.
$
The following are equivalent:
\begin{enumerate}
\item $\mu$ is a maximizing measure for $(X^{t})_{t}$ with respect to
$\Phi$;
\item $\bar{\mu}$ is a maximizing measure for $f$ with respect to $\tilde{\varphi}:=\varphi-M(\Phi,(X^{t})_{t})r$.
\end{enumerate}
In both cases, $M(\tilde{\varphi},f)=0$.
\end{lemma}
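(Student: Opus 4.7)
The strategy is to translate the optimization problem for $\Phi$ under the flow into an optimization problem for $\tilde{\varphi}$ under $f$ using the integral formula from the previous lemma, together with the canonical bijection $\mu \leftrightarrow \bar{\mu}$ between $(X^t)_t$-invariant and $f$-invariant probabilities recalled in Remark~\ref{rem:measures}.

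First, I would set $M := M(\Phi,(X^t)_t)$ and observe that for any $(X^t)_t$-invariant probability $\mu$ on $N^r$, Lemma~\ref{eq:intphi=00003Dintphibar/intr} gives
\[
\int_{N^r} \Phi \, d\mu = \frac{\int_N \varphi \, d\bar{\mu}}{\int_N r \, d\bar{\mu}}.
\]
Since $\int_N r \, d\bar{\mu} > 0$ (because $r$ is bounded away from zero), the inequality $\int_{N^r}\Phi\,d\mu \le M$ is equivalent to
\[
\int_N \varphi\,d\bar{\mu} - M\int_N r\,d\bar{\mu} \le 0, \qquad \text{i.e.,} \qquad \int_N \tilde{\varphi}\,d\bar{\mu} \le 0,
\]
with equality in one inequality if and only if there is equality in the other.

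Next, I would invoke the bijection $\mu \mapsto \bar{\mu}$ between $\mathcal M_1(N^r,(X^t)_t)$ and $\mathcal M_1(N,f)$. Taking the supremum over $\mu$ on the left is the same as taking the supremum over $\bar{\mu}$ on the right, so
\[
\sup_{\bar{\nu}\in\mathcal M_1(N,f)} \int_N \tilde{\varphi}\,d\bar{\nu} \le 0,
\]
and this supremum is attained (say, by the image of any maximizing measure $\mu^*$ for $\Phi$, which exists by compactness/continuity). Hence $M(\tilde{\varphi},f)=0$.

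Finally, I would conclude the equivalence of (1) and (2): $\mu$ is $\Phi$-maximizing iff $\int \Phi\,d\mu = M$ iff $\int \tilde{\varphi}\,d\bar{\mu} = 0 = M(\tilde{\varphi},f)$ iff $\bar{\mu}$ is $\tilde{\varphi}$-maximizing for $f$, using the bijection in both directions. There is no real obstacle here; the only point one needs to be a bit careful about is that the bijection between invariant measures preserves the property of being maximizing precisely because the normalizing factor $\int r\,d\bar{\mu}$ is strictly positive and uniformly bounded below, so the passage between the two optimization problems is rigorous.
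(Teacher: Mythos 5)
Your proof is correct and follows essentially the same route as the paper: it rests on the integral identity from the preceding lemma, the bijection of Remark~\ref{rem:measures}, and the positivity of $\int r\,d\bar\mu$. You streamline the argument by noting at once that $\int\Phi\,d\mu\le M$ is equivalent to $\int\tilde\varphi\,d\bar\mu\le 0$ with matching equality cases, which avoids the separate by-contradiction step the paper uses to verify $M(\tilde\varphi,f)=0$ in the converse direction.
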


\begin{proof}
First, Lemma~\ref{eq:intphi=00003Dintphibar/intr} ensures that
\begin{align*}
M(\Phi,(X^{t})_{t}) & =\max\Big\{ \int\Phi \,d\nu \colon \nu\in\mathcal{M}_{1}(N^{r},(X^{t})_{t})\Big\}
  =\max\Bigg\{ \frac{\int_{N}\varphi \,d\bar{\nu}}{\int_{N}r \,d\bar{\nu}}\colon \bar{\nu}\in\mathcal{M}_{1}(N,f)\Bigg\}.
\end{align*}
Therefore
$
M(\Phi,(X^{t})_{t})\ge\frac{\int_{N}\varphi \,d\bar{\nu}}{\int_{N}r \,d\bar{\nu}}
$
for all $\bar{\nu}\in\mathcal{M}_{1}(N,f)$ and, consequently,
\begin{equation}\label{eq:intphibar-Mr<0}
\max_{\bar{\nu}\in\mathcal{M}_{1}(N,f)}\int_{N}\left(\varphi-M(\Phi,(X^{t})_{t})r\right)d\bar{\nu}\le0.
\end{equation}
Moreover, if $\mu$ is a maximizing measure for $(X^{t})_{t}$ with
respect to $\Phi$ and $\bar \mu$ is as before then
\begin{align*}
\int_{N}\left(\varphi-M(\Phi,(X^{t})_{t})r\right)\,d\bar{\mu} & =\int_{N}\varphi \,d\bar{\mu}-M(\Phi,(X^{t})_{t})\int_{N}r\, d\bar{\mu}\\
 & =\int_{N}\varphi \,d\bar{\mu}-\int_{N^{r}}\Phi \,d\mu\int_{N}rd\bar{\mu}
 =0.
\end{align*}
Since zero is the maximum possible value
for $\int_{N}\left(\varphi-M(\Phi,(X^{t})_{t})r\right)d\bar{\mu}$ (recall \eqref{eq:intphibar-Mr<0}),
$\bar{\mu}$ is a maximizing measure for $\tilde \varphi=\varphi-M(\Phi,(X^{t})_{t})r$
with respect to $f$ and $M(\tilde{\varphi},f)=0$.

Conversely, suppose that $\bar{\mu}$ is a maximizing measure
for $\tilde{\varphi}:=\varphi-M(\Phi,(X^{t})_{t})r$ with respect
to $f$. We claim that $M(\tilde{\varphi},f)=0$. Suppose
by contradiction that
$
M(\tilde{\varphi},f)=\max_{\bar{\nu}\in\mathcal{M}_{1}(N,f)}\int_{N}\left(\varphi-M(\Phi,(X^{t})_{t})r\right)d\bar{\nu}<0.
$
In this case,
\[
\frac{M(\tilde{\varphi},f)}{\int_{N}rd\bar{\nu}}\le\frac{M(\tilde{\varphi},f)}{\|r\|_{\infty}}<0
\]
since, for any $\bar{\nu}\in\mathcal{M}_{1}(N,f)$, $\int_{N}rd\bar{\nu}\le\|r\|_{\infty}$.
Consequently $\int_{N}\left(\varphi-M(\Phi,(X^{t})_{t})r\right)d\bar{\nu}  \le M(\tilde{\varphi},f)<0 $ implies that
\begin{align*}
& \frac{\int_{N}\varphi d\bar{\nu}}{\int_{N}rd\bar{\nu}}-\frac{M(\Phi,(X^{t})_{t})\int_{N}rd\bar{\nu}}{\int_{N}rd\bar{\nu}}  \le\frac{M(\tilde{\varphi},f)}{\int_{N}rd\bar{\nu}}<0
\end{align*}
and so
$
 \int_{N^{r}}\Phi d\nu-M(\Phi,(X^{t})_{t})  \le\frac{M(\tilde{\varphi},f)}{\|r\|_{\infty}}<0.
$
Therefore there is $a>0$ such that $\int_{N^{r}}\Phi d\nu-M(\Phi,(X^{t})_{t})<-a$
for all $\nu\in\mathcal{M}_{1}(N^{r},(X^{t})_{t})$ and taking the
maximum over $\nu$ we conclude that
$
\max_{\nu\in\mathcal{M}_{1}(N^{r},(X^{t})_{t})}\int_{N^{r}}\Phi d\nu-M(\Phi,(X^{t})_{t})<-a<0,
$
leading to a contradiction and proving the claim.
Now, if $\bar\mu$ is a maximizing measure with respect to $\tilde\varphi$
then $\int_{N}\left(\varphi-M(\Phi,(X^{t})_{t})r\right)d\bar{\mu}=0$
and, by Lemma~\ref{eq:intphi=00003Dintphibar/intr},
$
M(\Phi,(X^{t})_{t})=\frac{\int_{N}\varphi d\bar{\mu}}{\int_{N}rd\bar{\mu}}=\int_{N^{r}}\Phi d\mu,
$
which proves that $\mu$ is a maximizing measure for $\Phi$ with respect to $(X^{t})_{t}$.
\end{proof}

\subsection{Ergodic optimization: from discrete-time to continuous time}

In Subsection~\ref{sec:reduc1} we proved that one can relate the ergodic optimization
for semiflows with the one for the Poincar\'e return maps. However such a relation, expressed
by Lemma~\ref{le:red1} could \emph{a priori} relate a topologically large set of observables on the suspension space
$N^r$ with a meager set of observables on $N$.
In its essence the next lemmas will allow us to see that this is not the case and provides a correspondence between maximizing measures
for potentials on the Poincar\'e map and maximizing measures for suspension
semiflows.

\begin{lemma}
\label{lem:submersion} Fix $\al\ge 0$.
The map $\mathfrak{F}:C^{\alpha}(N^{r},\mathbb{R}) \to C^{\alpha}(N,\mathbb{R})$
given by
\begin{equation}\label{eq_submersion}
\mathfrak{F}(\Phi)=\int_{0}^{r(x)}\Phi(x,s)\,ds
\end{equation}
is a submersion.
\end{lemma}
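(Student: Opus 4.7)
The plan is to establish that $\mathfrak{F}$ is a submersion by constructing a bounded linear right inverse. Since $\mathfrak{F}$ is itself linear, and Lemma~\ref{eq:intphi=00003Dintphibar/intr} provides an inequality of the form $\|\mathfrak{F}(\Phi)\|_{C^\alpha}\le C\|\Phi\|_{C^\alpha}$, the map $\mathfrak{F}$ is a bounded linear operator between Banach spaces; exhibiting a continuous linear section $\mathfrak{G}$ is then enough, since the derivative of $\mathfrak{F}$ at every point coincides with $\mathfrak{F}$ itself, so that $\mathfrak{F}\circ\mathfrak{G}=\mathrm{Id}$ immediately yields the submersion property.

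First I would fix an auxiliary $\alpha$-H\"older function $\rho:[0,1]\to\mathbb{R}$ with $\rho(0)=\rho(1)=0$ and $\int_0^1 \rho(u)\,du=1$ (for instance $\rho(u)=6u(1-u)$), and define
\[
\mathfrak{G}(\varphi)(x,s) \;:=\; \frac{\rho(s/r(x))}{r(x)}\,\varphi(x).
\]
The change of variables $u=s/r(x)$ gives $\int_0^{r(x)}\mathfrak{G}(\varphi)(x,s)\,ds=\varphi(x)$, so that $\mathfrak{F}\circ\mathfrak{G}=\mathrm{Id}$. The boundary conditions $\rho(0)=\rho(1)=0$ force $\mathfrak{G}(\varphi)(x,r(x))=0=\mathfrak{G}(\varphi)(f(x),0)$, so $\mathfrak{G}(\varphi)$ descends to a well-defined continuous function on the quotient $N^{r}$, and linearity of $\mathfrak{G}$ is transparent from the formula.

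Next I would verify that $\mathfrak{G}(\varphi)$ is $\alpha$-H\"older on $(N^{r},d_{N^{r}})$ with norm controlled by a constant multiple of $\|\varphi\|_{C^\alpha}$. By Proposition~\ref{prop:cdpi<dMr<cdpi(2.1)} it suffices to bound the oscillation of $\mathfrak{G}(\varphi)$ in the pseudo-metric $d_\pi$ from~\eqref{eqdpi(2.18)}. For pairs $(x,s),(y,t)$ for which $d_\pi$ is realized by the first branch $d_N(x,y)+|t-s|$, the estimate follows from the product rule for H\"older maps applied to $\varphi\in C^\alpha(N,\mathbb{R})$, to $1/r$ (using that $r$ is H\"older and bounded below), and to the map $(x,s)\mapsto \rho(s/r(x))$. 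For pairs realized by the second or third branch of~\eqref{eqdpi(2.18)}, corresponding to near-identifications across the gluing, I would exploit that $\mathfrak{G}(\varphi)$ vanishes on the gluing surfaces $\{s=0\}\cup\{s=r(x)\}$, together with H\"older control on $\rho$ near its zeros, to convert the small quantities $|s|$ and $|r(x)-t|$ into $d_\pi^\alpha$ bounds.

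The hard part is this last case, where the local product structure of $N^{r}$ degenerates near the gluing surfaces; it is precisely to tame this region that $\rho$ is required to vanish at both endpoints. Once the H\"older estimates are in hand on both sides of the dichotomy, the resulting inequality $\|\mathfrak{G}(\varphi)\|_{C^\alpha}\le C\|\varphi\|_{C^\alpha}$ yields continuity of $\mathfrak{G}$, which combined with the identity $\mathfrak{F}\circ\mathfrak{G}=\mathrm{Id}$ proves that $\mathfrak{F}$ is a submersion between Banach spaces.
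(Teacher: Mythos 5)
Your argument is correct and is in fact more careful than the paper's own proof. Both exploit linearity of $\mathfrak{F}$ and aim at a right inverse, but the paper's candidate section is simply $\Phi(x,t)=\varphi(x)/r(x)$, constant in $t$. That formula does not respect the identification $(x,r(x))\sim(f(x),0)$ that defines $N^r$---the values $\varphi(x)/r(x)$ and $\varphi(f(x))/r(f(x))$ disagree in general---so, taken literally, it does not descend to a continuous (let alone $\alpha$-H\"older) function on the quotient in the Bowen--Walters metric. Your modification $\mathfrak{G}(\varphi)(x,s)=\rho(s/r(x))\,\varphi(x)/r(x)$ with $\rho(0)=\rho(1)=0$ and $\int_0^1\rho=1$ vanishes on both gluing surfaces, so it genuinely defines an element of $C^\alpha(N^r,\mathbb{R})$, and it still integrates out to $\varphi(x)$. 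The price, which you correctly flag as the hard part, is the H\"older estimate near the gluing: your plan of passing through Proposition~\ref{prop:cdpi<dMr<cdpi(2.1)} and treating the three branches of $d_\pi$ separately, using the vanishing of $\mathfrak{G}(\varphi)$ on the gluing surfaces to control the second and third branches, is the right line of attack, although the proposal stops at a sketch rather than carrying the estimates out. One further remark: since $\mathfrak{F}$ is linear and bounded (Lemma~\ref{eq:intphi=00003Dintphibar/intr} gives $\|\mathfrak{F}(\Phi)\|_{C^\alpha}\le C\|\Phi\|_{C^\alpha}$), surjectivity alone already yields, via the open mapping theorem, everything the paper actually uses from the word ``submersion'' (that pre-images of open sets are open and pre-images of dense sets are dense); the bounded linear section $\mathfrak{G}$ is a clean way to exhibit that surjectivity, but it is slightly more structure than is strictly required.
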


\begin{proof}
As $\mathfrak{F}$ is linear (thus $D_{\Phi}\mathfrak{F}(H)=\mathfrak{F}(H)$
for $H\in C^{\alpha}(N^{r},\mathbb{R})$), in order to prove that $\mathfrak{F}$
is a submersion we need only prove that it is surjective. Indeed,
for any $\varphi\in C^{\alpha}(N,\mathbb{R})$ the observable
$\Phi(x,t)={\displaystyle \frac{\varphi(x)}{r(x)}}$ belongs to $C^{\alpha}(N^{r},\mathbb{R})$ (because $r(x)>0$ for every $x\in N$)
and
$
\mathfrak{F}(\Phi)  =\int_{0}^{r(x)}\Phi(x,s)\,ds
  =\int_{0}^{r(x)}{\displaystyle \frac{\varphi(x)}{r(x)}}\,ds
  =\varphi(x).
$
Therefore $D_{\Phi}\mathfrak{F}$ is surjective and $\mathfrak{F}$
is a submersion.
\end{proof}

In view of Lemma~\ref{lem:submersion} we can expect the ergodic optimization for discrete-time dynamics
to be lifted to the context of suspension semiflows.
On the one hand, the pre-image of an open  (resp. dense) set by $\mathfrak{F}$ is open (resp. dense).
However, on the other hand,
the image of $\mathfrak{F}$ does not restrict to the set of observables with maximum zero, a condition that is crucial in the relation between ergodic optimization of the flows and the Poincar\'e maps (recall item (2) in  Lemma~\ref{le:red1}).

In order to overcome this issue we
consider a further decomposition on the vector spaces of observables.
Given $\al\ge 0$ we say that a property (P) on the
maximizing measures associated of elements in
$C^\al(N,\mathbb R)$ is \emph{invariant under translation by constants} whenever the following holds:
the maximizing measures of $\varphi \in C^\al(N,\mathbb R)$ satisfy (P)
if and only if the maximizing measures of $\varphi + k \in C^\al(N,\mathbb R)$ satisfy (P) for every $k\in \mathbb R$.
For instance, the property (P) of having a unique maximizing measure is clearly invariant under translation by constants.
The following is the main result of this section.

\begin{proposition} \label{thm:susp-flow}
Let $\al\ge 0$ and let (P) be a property on the maximizing measures of the elements of $C^\al(N,\mathbb R)$
that is invariant under translation by constants.
Let $\mathcal P$ be the space of  all $\varphi \in C^\al(N,\mathbb R)$ so that all $\varphi$-maximizing measures satisfy (P).
Then the following hold:
\begin{enumerate}
\item[(i)] if $\mathcal P$ is dense in $C^\al(N,\mathbb R)$ then the set of observables $\Phi \in C^\al(N^r,\mathbb R)$
so that all $\Phi$-maximizing measures satisfy (P)  is dense in $C^\al(N^r,\mathbb R)$; and
\item[(ii)]  if $\mathcal P$ is open in $C^\al(N,\mathbb R)$ then
the set of observables $\Phi \in C^\al(N^r,\mathbb R)$
so that all $\Phi$-maximizing measures satisfy (P) is open in $C^\al(N^r,\mathbb R)$.
\end{enumerate}
\end{proposition}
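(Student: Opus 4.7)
The plan is to consider the continuous map $\Psi\colon C^\alpha(N^r,\mathbb R)\to C^\alpha(N,\mathbb R)$ defined by $\Psi(\Phi):=\mathfrak F(\Phi)-M(\Phi,(X^t)_t)\,r$ and to show that the set of observables whose maximizing measures satisfy (P) coincides with $\Psi^{-1}(\mathcal P)$. Lemma~\ref{le:red1} furnishes this identification, since the canonical bijection $\mu\leftrightarrow\bar\mu$ carries $\Phi$-maximizing measures exactly onto the $f$-maximizing measures of $\Psi(\Phi)$. The map $\Psi$ is continuous: $\mathfrak F$ is continuous and linear by Lemma~\ref{lem:submersion}, and $\Phi\mapsto M(\Phi,(X^t)_t)$ is Lipschitz as a supremum of bounded linear functionals on $C^\alpha(N^r,\mathbb R)$. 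This already settles item (ii), as $\Psi^{-1}$ of an open set is open.

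For item (i) I would factor $\Psi=\eta\circ\mathfrak F$, where $\eta\colon C^\alpha(N,\mathbb R)\to C^\alpha(N,\mathbb R)$ is given by $\eta(\varphi):=\varphi-c(\varphi)\,r$ and $c(\varphi):=\max_{\bar\nu\in\mathcal M_1(N,f)}\int\varphi\,d\bar\nu/\int r\,d\bar\nu$; the identity $M(\Phi,(X^t)_t)=c(\mathfrak F(\Phi))$ needed here comes from Lemma~\ref{eq:intphi=00003Dintphibar/intr}. Since $\mathfrak F$ is a continuous linear surjection between Banach spaces, the open mapping theorem makes it an open map, so preimages under $\mathfrak F$ preserve density. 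It thus suffices to prove that $\eta^{-1}(\mathcal P)$ is dense in $C^\alpha(N,\mathbb R)$ whenever $\mathcal P$ is.

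The crux is to use two compatible product decompositions of $C^\alpha(N,\mathbb R)$ through the same closed subset $V:=\{\psi:c(\psi)=0\}=\{\psi:M(\psi,f)=0\}$, where the equality of these two level sets uses $\int r\,d\bar\nu>0$ for every $f$-invariant probability. The first decomposition is the homeomorphism $\Xi_1\colon V\times\mathbb R\to C^\alpha(N,\mathbb R)$, $(\psi,k)\mapsto\psi+k$, with continuous inverse $\varphi\mapsto(\varphi-M(\varphi,f),\,M(\varphi,f))$; since (P) is invariant under translation by constants, $\mathcal P$ is a cylinder over $\mathcal P\cap V$ in this picture, so density of $\mathcal P$ in $C^\alpha(N,\mathbb R)$ is equivalent to density of $\mathcal P\cap V$ in $V$. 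The second is the homeomorphism $\Xi\colon V\times\mathbb R\to C^\alpha(N,\mathbb R)$, $(\psi,s)\mapsto\psi+sr$, with continuous inverse $\varphi\mapsto(\eta(\varphi),\,c(\varphi))$; its fibers are translates by $\mathbb R\,r$, so $\eta^{-1}(\mathcal P)$ is again a cylinder over $\mathcal P\cap V$ and density transfers back. The main obstacle I anticipate is pinpointing exactly where the invariance hypothesis bites: it is used precisely in the first decomposition, guaranteeing that density in the ambient space descends to the codimension-one slice $V$, without which the chain of density transfers through $\eta$ and then $\mathfrak F$ would collapse.
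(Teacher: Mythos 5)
Your proof is correct and takes essentially the same route as the paper: identify the target set with $\Psi^{-1}(\mathcal P)$ via Lemma~\ref{le:red1}, use the translation-invariance of (P) to reduce openness and density questions to the zero-level set of the maximum functional, and exploit that $\mathfrak F$ is an open continuous linear surjection (Lemma~\ref{lem:submersion}). The paper factors $\Psi=\mathfrak F\circ\Pi_0$ and works through the restricted level sets $C_0^r$ and $C_0$ (its Claims 1--3 carry out exactly what your two product decompositions of $C^\al(N,\mathbb R)$ do), while you factor $\Psi=\eta\circ\mathfrak F$; the mathematical content is the same, just organized in a slightly more transparent way.
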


\begin{proof}
Let $\al\ge 0$, let (P) be a property as above. One can write
\begin{equation}
C^{\alpha}(N,\mathbb{R})=\bigcup_{k\in\mathbb{R}}C_{k},
\quad\text{where} \quad C_{k}:=\{\psi\in C^{\alpha}(N,\mathbb{R}):M(\psi,f)=k\} \label{eq:setC_k}
\end{equation}
is a level set of maximization for every $k\in\mathbb{R}$.
Consider the map
$
\pi_{0}:C^{\alpha}(N,\mathbb{R})  \to C_{0}
$
given by $\pi_{0}(\varphi)=\varphi-M(\varphi,\sigma)$.
Our assumption implies that all $\varphi$-maximizing measures satisfy property (P) if and only if
all $\pi_{0}(\varphi)$-maximizing measures satisfy property (P).
While it is unclear if the map $\pi_0$ is a submersion (as is would require differentiability of the function $\varphi\mapsto M(\varphi,\sigma)$), the first step in the proof (Claims~\ref{c1} and~\ref{c2} below) consists of proving that the space of observables whose maximizing measures
satisfy (P) have the same structure
along the level sets defined in ~\eqref{eq:setC_k}.

\begin{claim}\label{c1}
If $\mathcal O\subset C^\al(N,\mathbb R)$ is open then $\pi_{0}(\mathcal{O})$ is open in $C_{0}$.
\end{claim}

\begin{claim}\label{c2}
If $\mathcal O\subset C^\al(N,\mathbb R)$ is dense then $\pi_{0}(\mathcal{O})$ is dense in $C_{0}$.
\end{claim}

\begin{proof}[Proof of Claim~\ref{c1}]
Take any $\varphi_{1}\in\pi_{0}(\mathcal{O})$. We will show that
$\varphi_{1}$ is an interior point of $\pi_{0}(\mathcal{O})$ in
$C_{0}$. There exists $\psi_{1}\in\mathcal{O}$ such
that $\varphi_{1}=\psi_{1}-M(\psi_{1},\sigma)$. Denote $k_{1}=M(\psi_{1},\sigma)$
and consider the set $C_{k_{1}}$, as in (\ref{eq:setC_k}). Since
$\mathcal{O}$ is open in $C^{\alpha}(N,\mathbb{R})$,
$\mathcal{O}\cap C_{k_{1}}$ is open in $C_{k_{1}}$, so there is
$\vep_{1}>0$ such that $B(\psi_{1},\vep_{1})\cap C_{k_{1}}\subset\mathcal{O}\cap C_{k_{1}}$,
where $B(\psi_{1},\vep_{1})$ is the open ball in $C^{\alpha}(N,\mathbb{R})$
with center in $\psi_{1}$ and radius $\vep_{1}$. For any $\varphi_{2}\in B(\varphi_{1},\vep_{1})\cap C_{0}$
define $\psi_{2}:=\varphi_{2}+k_{1}$. Since $M(\varphi_{2},\sigma)=0$,
we have that $M(\psi_{2},\sigma)=k_{1}$, hence $\psi_{2}\in C_{k_{1}}$
and
$
\|\psi_{1}-\psi_{2}\|=\|\psi_{1}-\varphi_{2}-k_{1}\|=\|\varphi_{1}-\varphi_{2}\|\le\vep_{1},
$
so $\psi_{2}\in B(\psi_{1},\vep_{1})$. Therefore $\psi_{2}\in C_{k_{1}}$
and $\varphi_{2}\in\pi_{0}(\mathcal{O})$. Since $\varphi_{2}$ was
taken arbitrarily, we have that $B(\varphi_{1},\vep_{1})\cap C_{0}\subset\pi_{0}(\mathcal{O})$,
which means that $\varphi_{1}$ is an interior point of $\pi_{0}(\mathcal{O})$
in $C_{0}$. Therefore $\pi_{0}(\mathcal{O})$ is an open subset of
$C_{0}$, which proves the claim.
\end{proof}

\begin{proof}[Proof of Claim~\ref{c2}]
In order to prove that $\pi_{0}(\mathcal{O})$ is dense, for any
$\varphi_{3}\in C_{0}\backslash\pi_{0}(\mathcal{O})$ we show that
$\varphi_{3}$ is a accumulation point for $\pi_{0}(\mathcal{O})$
in $C_{0}$. Since $\mathcal{O}$ is dense in $C^{\alpha}(N,\mathbb{R})$,
there is $\{\psi_{n}\}_{n}\subset\mathcal{O}$ such that $\psi_{n}\to\varphi_{3}$
as $n\to\infty$. Since $C^{\alpha}(N,\mathbb{R})\ni\varphi\mapsto M(\varphi,\sigma)$
is continuous, we have that $\pi_{0}$ is also continuous, so $\pi_{0}(\psi_{n})\to\pi_{0}(\varphi_{3})=\varphi_{3}$
as $n\to\infty$ (cf. Figure~1).
\begin{figure}[htb]\label{figura}
\includegraphics[scale=.3]{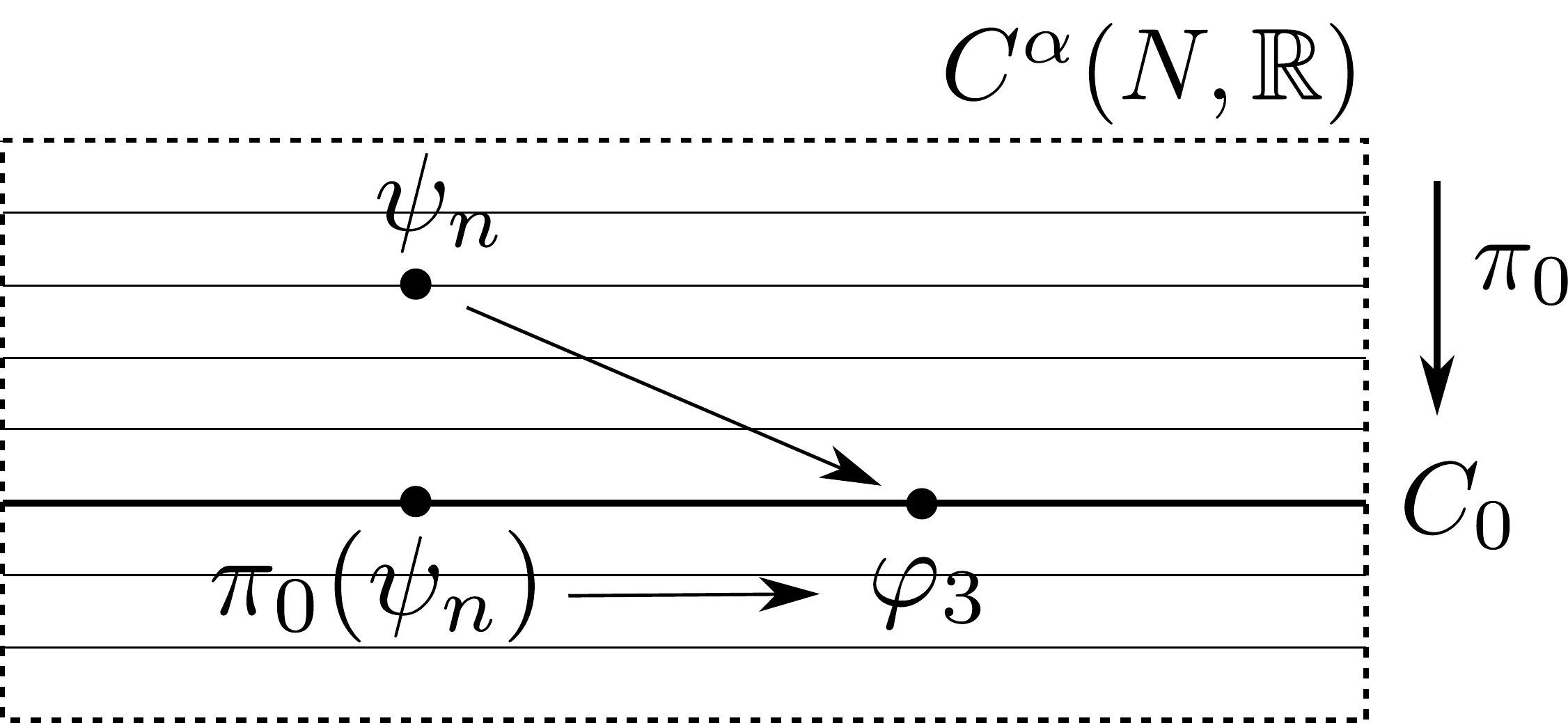}
\caption{$\psi_{n}\to\varphi_{3}\Rightarrow\pi_{0}(\psi_{n})\to\varphi_{3}$. }
\end{figure}
Therefore $\varphi_{3}$ is a accumulation point
of elements in $\pi_{0}(\mathcal{O})$, which means that $\pi_{0}(\mathcal{O})$
is dense in $C_{0}$. This proves the claim.
\end{proof}

The second step in the proof of Proposition~\ref{thm:susp-flow} is to ensure that some properties on the space
of observables lift from the context of maps to the context of semiflows.
Let $\mathfrak{F}$ be given by Lemma~\ref{lem:submersion}.
We have that the set
$
C_{0}^{r}=\{\Phi\in C^{\alpha}(N^{r},\mathbb{R}):M(\Phi,(X^{t})_{t})=0\}
$
is mapped by $\mathfrak{F}$ onto $C_0$ by $\mathfrak{F}$, that is, $\mathfrak{F}(C_{0}^{r})=C{}_{0}.$ In fact, take $\Phi\in C_{0}^{r}$, that is, $M(\Phi,(X^{t})_{t})=0$.
Writing $\tilde{\varphi}:=\varphi-M(\Phi,(X^{t})_{t})r$, where $\varphi=\mathfrak{F}(\Phi)$,
we have that $\tilde{\varphi}=\varphi$ and $M(\varphi,\sigma)=0$,
by Lemma \ref{le:red1}.

Now take $\varphi\in C_{0}$, meaning that  $M(\varphi,\sigma)=0$. Since
$\mathfrak{F}$ is surjective, there exists $\Phi\in C^{\alpha}(\Sigma_{\mathbf{R}}^{r},\mathbb{R})$
such that $\varphi=\int_{0}^{r(x)}\Phi(x,s)\,ds$. We claim that
$M(\Phi,(X^{t})_{t})=0$. Let $\mu_{\varphi}$ be $\sigma$-invariant such that $\int\varphi \,d\mu_{\varphi}=M(\varphi,\sigma)=0$
and let $\mu_{\Phi}$ be $(X^t)_t$-invariant such that $\int\Phi \, d\mu_{\Phi}=M(\Phi,(X^{t})_{t})$.
If $\overline{\mu}_{\Phi}$ is the unique $\sigma$-invariant probability so that $\mu_{\Phi}=\frac{\overline{\mu}_{\Phi}\times Leb}{\int r\, d\overline{\mu}_{\Phi}}$, Lemmas \ref{eq:intphi=00003Dintphibar/intr} and \ref{le:red1} ensure that
\begin{equation}\label{eq:Mquoc}
M(\Phi,(X^{t})_{t})=\int\Phi \,d\mu_{\Phi}=\frac{\int\varphi \, d\overline{\mu}_{\Phi}}{\int r \, d\overline{\mu}_{\Phi}}
\end{equation}
that $\int [\varphi-M(\Phi,(X^{t})_{t})r] \,d\overline{\mu}_{\Phi}=0$. Now, equation ~\eqref{eq:Mquoc} together with
the fact that  $\int\varphi \,d\overline{\mu}_{\Phi}\le M(\varphi,\sigma)=0$ implies $M(\Phi,(X^{t})_{t})\le0$.
Thus $\varphi-M(\Phi,(X^{t})_{t})r\ge\varphi$ and
\[
0= \int [\varphi-M(\Phi,(X^{t})_{t})r] \,d\overline{\mu}_{\Phi}
	\ge \int [\varphi-M(\Phi,(X^{t})_{t})r] \,d{\mu}_{\varphi}
	\ge \int\varphi d\mu_{\varphi}=0.
\]
This ensures that $\int [\varphi-M(\Phi,(X^{t})_{t})r] \,d{\mu}_{\varphi}=0$ and, consequently,
\[
M(\Phi,(X^{t})_{t})=\frac{\int\varphi d\mu_{\varphi}}{\int rd\mu_{\varphi}}=0.
\]
This shows that $\mathfrak{F}(C_{0}^{r})\supset C{}_{0}$.
Moreover, since $\mathfrak{F}$ is a submersion,
Claims~\ref{c1} and
~\ref{c2} ensure that the pre-image $\mathfrak{F}^{-1}(\pi_0(\mathcal{O}))$ is
an open (resp. dense) subset of $C_{0}^{r}$ whenever $\mathcal O$ is an open (resp. dense)
subset of $C^\al(N,\mathbb R)$. Now consider the projection $\Pi_0: C^{\alpha}(N^{r},\mathbb{R}) \to C_0^r$
defined by $\Phi \mapsto \Phi-M(\Phi,(X^{t})_{t})$.
Since the maximizing measure of every element in
$\mathfrak{F}^{-1}(\pi_0(\mathcal{O}))$ satisfies property (P) then we are left to prove the following:

\begin{claim}
\label{claim:open_and_dense} If $\mathcal{O} \subset C^{\alpha}(N,\mathbb{R})$ is open (resp. dense) then
$\Pi_0^{-1}(\mathfrak{F}^{-1}(\pi_0(\mathcal{O})))$ is an open (resp. dense) subset of $C^{\alpha}(N^{r},\mathbb{R})$.
\end{claim}

\begin{proof}
In order to prove that  $\Pi_0^{-1}(\mathfrak{F}^{-1}(\pi_0(\mathcal{O})))$ is open we prove that any
$\Phi \in \Pi_0^{-1}(\mathfrak{F}^{-1}(\pi_0(\mathcal{O})))$ in an interior point.
Let $\Phi_{0}:=\Pi_0(\Phi)=\Phi-M(\Phi,(X^{t})_{t})\in\mathfrak{F}^{-1}(\pi_0(\mathcal O))$.
Since $\mathfrak{F}^{-1}(\pi_0(\mathcal{O}))$ is open in $C_{0}^{r}$, there exists $\vep>0$ such that if $\Upsilon\in C_{0}^{r}$ and
$\|\Phi_{0}-\Upsilon\|<\vep$ then $\varUpsilon\in\mathfrak{F}^{-1}(\pi_0(\mathcal{O}))$.
On the other hand, since $\Phi\mapsto M(\Phi,(X^{t})_{t})$ is a continuous
map, there is $0<\delta<\vep/2$ such that if $\|\Phi-\Psi\|<\delta$, then
$|M(\Phi,(X^{t})_{t})-M(\Psi,(X^{t})_{t})|<{\displaystyle \frac{\vep}{2}}$.
So if $\Psi\in C^{\alpha}(N^{r},\mathbb{R})$
satisfies $\|\Phi-\Psi\|<\delta$ then
\begin{align*}
\|\Phi_{0}-\Psi_{0}\| & =\|\Phi-M(\Phi,(X^{t})_{t})-\Psi+M(\Psi,(X^{t})_{t})\|\\
 & \le\|\Phi-\Psi\|+\|M(\Phi,(X^{t})_{t})-M(\Psi,(X^{t})_{t})\|
  <\delta+\frac{\vep}{2}
  <\vep
\end{align*}
which means that $\Psi_{0}\in C_{0}^{r}$. So $\Psi\in\Pi_0^{-1}(\mathfrak{F}^{-1}(\pi_0(\mathcal{O})))$
and consequently $\Pi_0^{-1}(\mathfrak{F}^{-1}(\pi_0(\mathcal{O})))$ is open.
In order to show that $\Pi_0^{-1}(\mathfrak{F}^{-1}(\pi_0(\mathcal{O})))$ is dense,
we take any $\Psi\in C^{\alpha}(N^{r},\mathbb{R})\backslash \Pi_0^{-1}(\mathfrak{F}^{-1}(\pi_0(\mathcal{O})))$
and show that $\Psi$ is a accumulation point of elements in $\Pi_0^{-1}(\mathfrak{F}^{-1}(\pi_0(\mathcal{O})))$.
Since $\mathfrak{F}^{-1}(\pi_0(\mathcal{O}))$ is dense in $C_{0}^{r}$,
for any $\vep>0$ there is $\Upsilon\in\mathfrak{F}^{-1}(\mathcal{O}_{0})$
such that $\|\Psi_{0}-\Upsilon\|<\vep$. Taking $\Phi=\Upsilon+M(\Psi,(X^{t})_{t})$,
note that $M(\Phi,(X^{t})_{t})=M(\Psi,(X^{t})_{t})$.
 Moreover $\Phi\in \Pi_0^{-1}(\mathfrak{F}^{-1}(\pi_0(\mathcal{O})))$, because $M(\Phi,(X^{t})_{t})=M(\Psi,(X^{t})_{t})$
and
\begin{align*}
\Phi_{0}  =\Phi-M(\Phi,(X^{t})_{t})
  =\Upsilon+M(\Psi,(X^{t})_{t})-M(\Psi,(X^{t})_{t})
  =\Upsilon\in\mathfrak{F}^{-1}(\pi_0(\mathcal{O})).
\end{align*}
We also have
$\|\Psi-\Phi\|=\|\Psi-\Upsilon-M(\Psi,(X^{t})_{t})\|=\|\Psi_{0}-\Upsilon\|<\vep.$
Since $\vep$ was arbitrary the density of $\Pi_0^{-1}(\mathfrak{F}^{-1}(\pi_0(\mathcal{O})))$ follows. This finishes
the proof of the claim.
\end{proof}
The proof of the proposition is now complete.

\end{proof}

\subsection{Suspension flows over subshifts of finite type}

The following result summarizes combines
Proposition~\ref{thm:susp-flow} together with the
results on ergodic optimization for subshifts of finite type in Subsection~\ref{unilateral-bilateral}.

\begin{theorem} \label{thm:semi-flow} Let $\sigma:\Sigma_{\mathbf{R}}\to\Sigma_{\mathbf{R}}$
be a transitive two-sided subshift of finite type, let  $r:\Sigma_{\mathbf{R}}\to\mathbb{R}$ be a continuous
roof function bounded away from zero and let $(X^{t})_{t\in\mathbb{R}}$
be the suspension flow over $\sigma$ with height function $r$. Then the following hold:
\begin{enumerate}
\item  exists an open and dense subset $\mathcal{O}\subset C^{0}(\Sigma_{\mathbf{R}},\mathbb{R})$
so that every $\Phi\in\mathcal{O}$ has a unique $(X^{t})_{t}$-maximizing
measure with respect to $\Phi$, it has zero entropy and full support;
\item exists a dense set $\mathcal{D}\subset C^{0}(\Sigma_{\mathbf{R}},\mathbb{R})$
so that every $\Phi\in\mathcal{D}$ has uncountably many ergodic $\Phi$-maximizing measures;
\item if, in addition, $r$ is H\"{o}lder continuous then
exists an open and dense set $\mathcal{R}_{r}\subset C^{\alpha}(\Sigma_{\mathbf{R}}^{r},\mathbb{R})$
of observables $\Phi:\Sigma_{\mathbf{R}}^{r}\to\mathbb{R}$ so that every
$\Phi\in\mathcal{R}_{r}$ has a unique $(X^{t})_{t}$-maximizing
measure with respect to $\Phi$, and it is supported on a periodic orbit;
\end{enumerate}
\end{theorem}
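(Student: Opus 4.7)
The plan is to apply Proposition~\ref{thm:susp-flow} three times, once for each item, lifting an ergodic optimization result for the base shift $\sigma:\Sigma_{\mathbf R}\to\Sigma_{\mathbf R}$ to the suspension flow $(X^{t})_{t}$ on $\Sigma_{\mathbf R}^{r}$. The preliminary step is to verify that a transitive two-sided subshift of finite type satisfies the periodic gluing orbit property (reducing the non-mixing case to the mixing one via the standard cyclic decomposition), so that Theorem~\ref{morris} applies, and to observe that in each of the three items the property satisfied by the maximizing measures is invariant under translation by constants, since adding a constant does not alter the set of maximizing measures.

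For item (3), apply Proposition~\ref{prop:open_dense_shift_bilateral} to obtain an open and dense subset $\mathcal R\subset C^{\alpha}(\Sigma_{\mathbf R},\mathbb R)$ of observables with a unique maximizing measure supported on a periodic orbit. Since the roof $r$ is H\"older, Lemma~\ref{eq:intphi=00003Dintphibar/intr} guarantees that the submersion $\mathfrak F:C^{\alpha}(\Sigma_{\mathbf R}^{r},\mathbb R)\to C^{\alpha}(\Sigma_{\mathbf R},\mathbb R)$ is well-defined in the H\"older category, and items (i) and (ii) of Proposition~\ref{thm:susp-flow} lift $\mathcal R$ to an open and dense set $\mathcal R_r\subset C^{\alpha}(\Sigma_{\mathbf R}^{r},\mathbb R)$. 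Under the bijection of Remark~\ref{rem:measures}, a $\sigma$-invariant measure supported on a periodic orbit of period $p$ corresponds to a flow-invariant measure supported on the periodic orbit of period $\sum_{i=0}^{p-1} r(\sigma^{i}(x))$, yielding the desired conclusion.

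For item (1), apply Theorem~\ref{morris} to $\sigma$ to obtain a dense $G_{\delta}$ set, and write it as $\bigcap_{n}\mathcal O_{n}$ with each $\mathcal O_n\subset C^{0}(\Sigma_{\mathbf R},\mathbb R)$ open and dense. Proposition~\ref{thm:susp-flow} applied to each $\mathcal O_n$ yields an open and dense lift $\mathcal P_n\subset C^{0}(\Sigma_{\mathbf R}^{r},\mathbb R)$, and Baire's theorem provides that $\bigcap_n\mathcal P_n$ is dense in $C^0(\Sigma_{\mathbf R}^{r},\mathbb R)$. Every element has a unique maximizing measure that is zero-entropy (by Abramov's formula, since $\int r\,d\tilde\mu>0$) and fully supported (since $\mu=\tilde\mu\times\mathrm{Leb}/\int r\,d\tilde\mu$ has full support precisely when $\tilde\mu$ does), completing this item.

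Item (2) is the most delicate, as Theorem~\ref{Shinoda} applies to one-sided subshifts and the cohomological reduction of Lemma~\ref{lemBowen1.6homology} is unavailable for merely continuous observables. The hardest step is therefore transferring Shinoda's conclusion to $C^{0}(\Sigma_{\mathbf R},\mathbb R)$; the plan is to proceed by approximation. Given $\varphi\in C^{0}(\Sigma_{\mathbf R},\mathbb R)$ and $\varepsilon>0$, approximate $\varphi$ uniformly by a cylinder function $\varphi_k$ depending only on coordinates $x_{-k},\dots,x_k$ and observe that $\varphi_k\circ\sigma^{-k}$ depends only on non-negative coordinates, hence factors through a continuous observable on $\Sigma_{\mathbf R}^{+}$. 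Applying Theorem~\ref{Shinoda} at the one-sided level, pulling back along the natural extension (which preserves the bijection of invariant measures and ergodicity), and using that $\int g\circ\sigma^{-k}\,d\mu=\int g\,d\mu$ for $\sigma$-invariant $\mu$ produces $\psi\in C^{0}(\Sigma_{\mathbf R},\mathbb R)$ uniformly close to $\varphi$ and admitting uncountably many ergodic maximizing measures. Proposition~\ref{thm:susp-flow}(i) then lifts the resulting dense subset of $C^{0}(\Sigma_{\mathbf R},\mathbb R)$ to a dense subset of $C^{0}(\Sigma_{\mathbf R}^{r},\mathbb R)$, concluding the proof.
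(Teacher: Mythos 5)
Your items (1) and (3) take essentially the same route as the paper: item (3) is Proposition~\ref{prop:open_dense_shift_bilateral} lifted via Proposition~\ref{thm:susp-flow}, and item (1) is Theorem~\ref{morris} (via the periodic gluing orbit property for transitive two-sided SFTs) lifted the same way. For item (2), however, you take a genuinely different and, in fact, more careful route, and this is worth highlighting. The paper invokes ``the results in Subsection~\ref{unilateral-bilateral}'' to transfer Shinoda's theorem from one-sided to two-sided shifts, but those results --- Lemma~\ref{lemBowen1.6homology} and Lemma~\ref{lemHoloSubmersion} --- are cohomology constructions that require H\"older regularity ($\varphi\in\mathscr F_{\mathbf R}$) and do not apply to merely continuous observables, where the series defining $u_\varphi$ in \eqref{eq:cobu} need not converge. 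You correctly identify this, and your replacement --- approximating $\varphi$ uniformly by a symmetric cylinder function $\varphi_k$, shifting by $\sigma^k$ so it depends only on non-negative coordinates and hence factors through $\Sigma_{\mathbf R}^+$, applying Shinoda at the one-sided level, pulling back along the natural extension (which bijects invariant measures preserving ergodicity and entropy), and using the invariance $\int g\circ\sigma^{k}\,d\mu = \int g\,d\mu$ to transport the conclusion back to a $C^0$-small perturbation of $\varphi$ --- is sound and fills that gap. (Note a cosmetic sign typo: with $\sigma$ the left shift, you want $\varphi_k\circ\sigma^{k}$, not $\varphi_k\circ\sigma^{-k}$, for dependence on $x_0,\dots,x_{2k}$; the argument is otherwise as you describe.) The trade-off is that the paper's (intended) cohomological route would give a reduction uniform over all observables at once, while your cylinder approximation handles density pointwise; for the density statement needed in item (2) the latter is all that is required, so your version is both simpler and more correct at the stated level of regularity.

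One small caution: in item (1) you decompose the Morris $G_\delta$ set as $\bigcap_n\mathcal O_n$ and propose to lift each $\mathcal O_n$ via Proposition~\ref{thm:susp-flow}. But that proposition is stated for sets of the specific form ``all observables whose maximizing measures satisfy a translation-invariant property,'' and an arbitrary open dense set appearing in a $G_\delta$ decomposition need not be of this form, so the proposition's conclusion about what the lifted observables' maximizing measures look like does not apply termwise. This is the same level of imprecision as the paper's own one-line reduction, and both can be repaired by choosing the $\mathcal O_n$ to themselves be defined by translation-invariant properties (e.g.\ ``all maximizing measures give mass at least $1-1/n$ to an $\varepsilon_n$-neighborhood of a full-support measure of entropy $<1/n$''); but as stated it is slightly shortchanged.
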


\begin{proof}
On the one hand, by Proposition \ref{prop:open_dense_shift_bilateral},
there exists an open and dense set $\mathcal{O}\subset C^{\alpha}(\Sigma_{\mathbf{R}},\mathbb{R})$
such that every $\varphi\in\mathcal{O}$ has a unique $\varphi$-maximizing
measure $\bar{\mu}$ and it is supported on a periodic orbit.
On the other hand, since $\sigma$ is a transitive subshift of finite type then
it satisfies the gluing orbit property \cite{BV}. Thus Theorem~\ref{morris} ensures that there exists a dense $G_\delta$ subset $\mathcal{Z} \subset
C^{0}(\Sigma_{\mathbf{R}},\mathbb{R})$ so that every $\varphi \in \mathcal Z$ has a unique $\varphi$-maximizing measure, with
zero entropy and full support.
Finally, the results in Subsection~\ref{unilateral-bilateral} to obtain conclusions for bilateral subshifts of finite type together with Theorem~\ref{Shinoda} ensure that exists a dense subset $\mathcal D \subset C^0(\Sigma_R,\mathbb R)$ such that for every
$\varphi \in \mathcal D$ there exist uncountably many ergodic $\varphi$-maximizing measures with full support and positive entropy.
Hence, the corollary is a direct consequence of Theorem~\ref{thm:susp-flow}.
\end{proof}

\subsection{Ergodic optimization on hyperbolic sets}

Let $\Lambda\subset M$ be a horseshoe
for the flow $(X^{t})_{t\in\mathbb{R}}$.
Since hyperbolic flows admit Markov partitions, these are modeled by suspension flows \cite{Bowen,ratner1973markov}. Then
Theorem~\ref{TeoA:hyper-subset} follows from corresponding result for suspension flows (cf. Theorem~\ref{thm:semi-flow}).
Indeed, there is a subshift of finite type $\sigma_{\mathbf{R}}:\Sigma_{\mathbf{R}}\to\Sigma_{\mathbf{R}}$,
a positive $r\in C^{\alpha}(\Sigma_{\mathbf{R}},\mathbb{R})$ and
a H\"{o}lder continuous
homeomorphism $\pi:\Sigma_{\mathbf{R}}^{r}\to\Lambda$ so that
\begin{equation}
\pi \circ Y^{t} = X^t\circ \pi
\quad \text{for every $t\in \mathbb R$,}
\label{eq:conjugsigflow-1}
\end{equation}
where $\Sigma_{\mathbf{R}}^{r}$ is a quotient as in Subsection~\ref{s:semiflows}
and $(Y^{t})_{t}:\Sigma_{\mathbf{R}}^{r}\to\Sigma_{\mathbf{R}}^{r}$
is the suspension flow over $\sigma_{\mathbf{R}}$ with height function
$r$.
If $\Lambda$ is a horseshoe then $\pi:\Sigma_{\mathbf{R}}^{r}\to\Lambda$
is one-to-one. So given an observable $\Phi\in C^{\alpha}(\Lambda,\mathbb{R})$
one can induce an observable $\Phi^{*}\in C^{\alpha}(\Sigma_{\mathbf{R}}^{r},\mathbb{R})$
by doing $\Phi^{*}=\Phi\circ\pi$ and the map $\Theta:C^{\alpha}(\Lambda,\mathbb{R})\to C^{\alpha}(\Sigma_{\mathbf{R}}^{r},\mathbb{R})$
defined by $\Theta(\Phi)=\Phi\circ\pi$ is one-to-one.
Theorem \ref{TeoA:hyper-subset} is now a direct consequence of Theorem~\ref{thm:semi-flow}.

\section{Application to Lorenz-like attractors }\label{secLorenz}

In this section we use  Theorem \ref{TeoA:hyper-subset} to prove Theorem \ref{TeoALorenz} by an approximation method.
The proof of the latter also relies on auxiliary results on the structure of invariant subsets of Lorenz attractors and the space of
invariant probability measures for wild Lorenz attractors.

\subsection{Proper subsets and the space of invariant probabilities }\label{subsecLorenz}

Let $N$ be a three-dimensional closed Riemannian manifold,
$\Lambda\subset N$ be a
geometric Lorenz attractor for the $C^1$-flow
$(X^t)_t$ generated by a $C^1$-vector field $X$ and let $\mathcal W\supset \Lambda $ denote an attracting region. Hence $\Lambda= \bigcap_{t\ge 0} X^{-t}(\mathcal W)$.
For every open neighborhood $\mathcal U$ of $\text{Sing}(X)$ and every $\varphi \in C^0(N,\mathbb R)$, and
let the maximal invariant set $ \Lambda_{\mathcal U}$ in $\mathcal W \setminus \cU$
and the constrained ergodic optimization maximum be defined by
\begin{equation}\label{eq:constrainedM}
\Lambda_{\mathcal U} := \bigcap_{t\ge 0} X^{-t}(\mathcal W \setminus \cU)
\qquad\text{and}\qquad
M_\cU(\varphi)=\max_{\substack{\mu(\Lambda_\cU)=1\\ \mu\, \text{ergodic}}}
	\Bigg\{ \int \varphi \, d\mu\Bigg\},
\end{equation}
respectively. The set $\Lambda_{\mathcal U}$ is a compact $(X^t)_t$-invariant singular-hyperbolic
 set without singularities, hence it is a uniformly hyperbolic set (cf. \cite[Proposition~6.2]{araujo2010three}).
In the case of more general singular-hyperbolic attractors \emph{a priori} the set $\Lambda_{\mathcal U}$ could be non-transitive.
We prove this is not the case for geometric Lorenz attractors.

\begin{lemma}\label{le:transitive}
Assume that $\Lambda$ is a geometric Lorenz attractor for the flow $(X^t)_t$.
If $\mathcal U$ is any open neighborhood of $\sigma$ then the invariant set $\Lambda_{\mathcal U}$ defined by
\eqref{eq:constrainedM} is transitive.
\end{lemma}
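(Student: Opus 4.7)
The plan is to reduce the problem through the Poincar\'e cross-section and its one-dimensional quotient to a statement about the one-dimensional Lorenz map $\alpha$, where the locally eventually onto property will yield transitivity.

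First I would observe that since $\mathcal U$ is a neighborhood of $\sigma$, the set $\Lambda_{\mathcal U}$ is a compact uniformly hyperbolic invariant set (by \cite[Proposition~6.2]{araujo2010three}), hence contains no singularity and every orbit meets the cross-section $\Sigma$ infinitely often in both time directions. Consequently the flow on $\Lambda_{\mathcal U}$ is transitive if and only if the Poincar\'e return map $P:\Sigma\setminus\Gamma\to\Sigma$ is transitive on $K^P_{\mathcal U}:=\Lambda_{\mathcal U}\cap\Sigma$. Here I would also fix $\delta>0$ such that every orbit passing through $\Sigma$ with $|x|<\delta$ enters $\mathcal U$ before the next return; such a $\delta$ exists because trajectories through the cross-section near $\Gamma$ must pass arbitrarily close to $\sigma$. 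In particular $K^P_{\mathcal U}$ is contained in $\{(x,y,1)\in\Sigma:|x|\ge\delta\}$, a region where $P$ is a uniformly hyperbolic diffeomorphism with uniformly contracting strong stable foliation given by the vertical direction.

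Next I would quotient by the strong stable foliation. Using \eqref{eq:defPoincare}, $P$ projects to the one-dimensional map $\alpha:[-1,1]\setminus\{0\}\to[-1,1]$, and $K^P_{\mathcal U}$ projects to a compact $\alpha$-invariant set $K_{\mathcal U}\subset [-1,1]\setminus(-\delta,\delta)$. Because the quotient map $\pi_s:K^P_{\mathcal U}\to K_{\mathcal U}$ is continuous, surjective and the fibres are uniformly contracted by $P$ (with contraction rate $\lambda<1$), transitivity of $P$ on $K^P_{\mathcal U}$ follows from transitivity of $\alpha$ on $K_{\mathcal U}$: a dense $\alpha$-orbit in $K_{\mathcal U}$ can be lifted to a $P$-orbit in $K^P_{\mathcal U}$ that shadows it up to exponentially small error in the vertical direction, and by the contraction of the stable foliation the resulting $P$-orbit is dense in $K^P_{\mathcal U}$.

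The core of the argument is therefore proving transitivity of $\alpha\mid_{K_{\mathcal U}}$. Given nonempty relatively open subsets $U',V'\subset K_{\mathcal U}$, write $U'=U\cap K_{\mathcal U}$, $V'=V\cap K_{\mathcal U}$ for open intervals $U,V\subset[-1,1]\setminus\{0\}$. The second condition in \eqref{eq:defLorenz}, together with \cite[Lemma~3.16]{araujo2010three}, asserts that $\alpha$ is locally eventually onto, so there exists $N\ge 1$ with $\alpha^N(U)=(\alpha(-1),\alpha(1))\supset V$. To upgrade this to a point of $U'$ whose $N$-th image lies in $V'$, I would use the Markov structure of $\alpha$ away from $0$: refine the natural two-branch partition $\{[-1,-\delta],[\delta,1]\}$ to a generating finite partition $\mathcal P$ by pulling back under a sufficiently high iterate of $\alpha$, yielding a conjugacy between $\alpha\mid_{K_{\mathcal U}}$ and a subshift of finite type $\sigma_{\mathbf A}$; the leo property of $\alpha$ then forces $\mathbf A$ to be primitive (some power has all entries positive), so $\sigma_{\mathbf A}$ is topologically mixing, hence transitive. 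Pulling back the dense orbit through the conjugacy gives a dense orbit in $K_{\mathcal U}$, completing the proof.

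The main obstacle is the last step: the leo property produces a preimage of a point of $V'$ inside $U$, but not inside $U'$, because nothing prevents the intermediate iterates from entering $(-\delta,\delta)$. Producing the Markov coding compatible with the removed neighborhood, and checking primitivity of the transition matrix from the leo property, is the technical heart of the argument; alternatively one may invoke the shadowing lemma on the hyperbolic set $K_{\mathcal U}$ together with density of periodic orbits (which follows from leo and uniform expansion) to directly connect $U'$ to $V'$ by an orbit in $K_{\mathcal U}$.
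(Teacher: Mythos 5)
Your reduction of the problem to the Poincar\'e map and then to its one-dimensional quotient is the same opening as the paper's proof, but the final step is left open, and the gap you flag is genuine. Neither of your two patches is an argument. The claim that leo of $\alpha$ forces the transition matrix $\mathbf A$ of $\alpha|_{K_{\mathcal U}}$ to be primitive does not follow: leo is a property of the full interval map, and once you delete $(-\delta,\delta)$ together with all of its preimages the resulting symbolic system could \emph{a priori} split into non-communicating pieces, so primitivity of $\mathbf A$ is essentially the statement you are trying to prove. The shadowing alternative has the same circularity: shadowing on the hyperbolic set $K_{\mathcal U}$ realizes $\vep$-pseudo-orbits as true orbits, but you first need an $\vep$-pseudo-orbit \emph{inside} $K_{\mathcal U}$ connecting $U'$ to $V'$, i.e.\ chain transitivity of $K_{\mathcal U}$ --- again the content of the lemma.

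The paper's proof avoids this by not trying to confine intermediate iterates at all. It takes two periodic points $p,q\in\Lambda_{\mathcal U}\cap\Sigma$, sets $I=\pi^s\big(W^u_{loc}(p)\big)\subset[-1,1]\setminus\{0\}$, and applies leo to conclude $\alpha^N(I)=(\alpha(-1),\alpha(1))$; hence $P^N\big(W^u_{loc}(p)\big)$ stretches across the cross-section $\Sigma$ and intersects $W^s_{loc}(q)$. Swapping the roles of $p$ and $q$ gives a heteroclinic relation between them, and since periodic points are dense in $\Lambda_{\mathcal U}\cap\Sigma$ this set is a homoclinic class for $P$; transitivity then follows from Birkhoff--Smale. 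Crucially, this heteroclinic argument never requires the forward iterates of $I$ to avoid $(-\delta,\delta)$ --- precisely the control your direct approach lacks. If you want to repair your argument along its own lines, replacing the direct transitivity claim for $\alpha|_{K_{\mathcal U}}$ by this heteroclinic/homoclinic-class device is exactly the missing idea.
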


\begin{proof}
Let $\Sigma$ denote the global cross-section to the flow $(X^t)_t$. Observe that it is enough to
prove that the Poincar\'e return map $P$ defined in ~\eqref{eq:defPoincare} is transitive on $\Lambda_{\mathcal U} \cap \Sigma$.
A direct proof of this single fact seems not easy.
We prove transitivity by proving that $\Lambda_{\mathcal U} \cap \Sigma$ is a homoclinic class for $P$ (transitivity
follows as a consequence of Birkhoff-Smale theorem).

Since periodic points are dense, in order to prove that $\Lambda_{\mathcal U} \cap \Sigma$ is a homoclinic class for $P$ we claim that any two periodic
points for $P$ are heteroclinically related.  Let $p,q$ be any periodic points for $P$ (these are hyperbolic periodic points
and we assume, without loss of generality, that $p,q$ are fixed points for $P$).
Let $W_{loc}^u(p) \subset \Sigma$ denote the local Pesin unstable manifolds at $p$ and set $I=\pi^s( W_{loc}^u(p) )
\subset [-1,1]\setminus\{0\}$.
Since one-dimensional Lorenz map $\alpha$ is locally eventually onto (recall Subsection~\ref{s:shyp}), there exists $N\ge 1$
so that $\alpha^N(I) =(\alpha(-1),\alpha(1))$. This proves that $P^N(W_{loc}^u(p))$ crosses $\Sigma$, hence $W^u(p)$
intersects the stable manifold $W_{loc}^s(q)$. Replacing the roles of $p$ and $q$ above we conclude that $p$ and $q$
are heteroclinically related. Therefore, $\Lambda_{\mathcal U} \cap \Sigma$ is transitive.
\end{proof}

We proceed to describe the maximum value in ergodic optimization for flows.
By the ergodic decomposition theorem the maximum is attained at ergodic measures and, consequently,
the following equalities hold:
\begin{align}
M(\Phi,(X^{t})_{t})
	& = \max_{\substack{\mu(\Lambda)=1\\ \mu\, \text{ergodic}}}
	\Big\{ \int \Phi \, d\mu\Big\} \nonumber \\
	& = \max \Big\{
	\sup_{\substack{\mu\, \text{non-atomic}\\ \mu\, \text{ergodic}}}  \Big\{ \int \Phi \, d\mu\Big\},
	\sup_{\substack{\mu\, \text{atomic}\\ \mu\, \text{ergodic}}}
	\Big\{ \int \Phi \, d\mu\Big\}
	\Big\} \label{max12} \\
	& = \max \Big\{
	\sup_{\substack{\supp \mu=\Lambda \\ \mu\, \text{ergodic}}}  \Big\{ \int \Phi \, d\mu\Big\},
	\sup_{\substack{\supp \mu\neq \Lambda \\ \mu\, \text{ergodic}}}
	\Big\{ \int \Phi \, d\mu\Big\}
	\Big\} \label{max123} \\
		& = \max \Big\{
	\sup_{\substack{\text{Sing}(X) \cap \supp \mu \neq \emptyset \\ \mu\, \text{ergodic}}}  \Big\{ \int \Phi \, d\mu\Big\},
	\sup_{\substack{\text{Sing}(X) \cap \supp \mu = \emptyset \\ \mu\, \text{ergodic}}}
	\Big\{ \int \Phi \, d\mu\Big\}
	\Big\}. \label{max1234}
\end{align}
In order to simplify notation we used $M(\Phi,(X^{t})_{t})$ instead of $M(\Phi,(X^{t})_{t},\Lambda)$ to denote the maximum value in ergodic optimization  on the Lorenz attractor $\Lambda$.
Using Theorem~\ref{Jenkinson} we conclude that exactly one of the supremum terms in the right hand of each of the
equations \eqref{max12} - \eqref{max1234} is attained by some ergodic probability measure in the case of
typical observables. Hence it makes natural to ask whether
probabilities determined by periodic orbits are dense in the space of invariant measures for the Lorenz attractor.
The next lemma gives a positive answer to this question.

\begin{lemma}\label{le:mper}
Assume that $\Lambda$ is a wild Lorenz attractor.
The space of periodic probabilities is dense in the space $\mathcal M_e((X_t)_t)$ of $(X_t)_t$-invariant
probability measures on the attractor.
\end{lemma}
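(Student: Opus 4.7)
The plan is to combine the ergodic decomposition theorem with two distinct periodic-approximation mechanisms: horseshoe shadowing for non-atomic ergodic measures, and the wildness hypothesis for approximating the Dirac mass $\delta_\sigma$ at the singularity. A final gluing step will produce a single periodic orbit measure from any finite convex combination of periodic measures.

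First I would use ergodic decomposition to reduce the problem to approximating finite convex combinations of ergodic measures in the weak-$*$ topology. Since $\{\sigma\}$ is an $(X^t)_t$-invariant set, every ergodic probability on $\Lambda$ is either $\delta_\sigma$ itself or a non-atomic measure with no mass at $\sigma$. For the non-atomic case, Remark~\ref{rmk:Xueting} (the shadowing lemma of \cite{STV} combined with the extended Liao closing lemma of \cite{HW}) directly ensures that such a $\nu$ is a weak-$*$ limit of periodic orbit measures.

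The heart of the argument is the approximation of $\delta_\sigma$ by periodic measures, which is where the wildness hypothesis enters. From $\sup_\nu\int\log|\alpha'|\,d\nu=+\infty$ one extracts $\alpha$-invariant measures $\nu_k$ with arbitrarily large Lyapunov exponent; using that $\alpha$ is piecewise expanding and locally eventually onto, one closes up long typical orbits of such $\nu_k$ into $\alpha$-periodic points $x_n$ of period $p_n$ and minimum distance $\epsilon_n$ to $0$ satisfying $-\log\epsilon_n\gg p_n$. Compare with Remark~\ref{rmk:slow-rec}, which indicates that without wildness no such sequence need exist, since in the non-wild case periodic orbits would have uniformly bounded closest approach to $0$. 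Lifting $x_n$ to a flow periodic orbit $\gamma_n$ via the Poincar\'e section, and using that the roof function behaves as $-\log|x|$ near $\Gamma=\{x=0\}$ while remaining uniformly bounded away from $\Gamma$, the flow period $T_n$ is dominated by a single long excursion near $\sigma$; consequently $\mu_{\gamma_n}\to\delta_\sigma$ in the weak-$*$ topology.

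Finally, to upgrade from finite convex combinations $\sum_i c_i\mu_{q_i}$ of periodic measures to a single periodic measure, I would exploit that each horseshoe $\Lambda_\mathcal{U}$ is a transitive hyperbolic basic set (Lemma~\ref{le:transitive}) and that $\Lambda$ itself is a homoclinic class. Sigmund's classical argument on $\Lambda_\mathcal{U}$ produces a single periodic orbit approximating any convex combination of periodic measures whose orbits lie in a common $\Lambda_\mathcal{U}$; heteroclinic connections between horseshoe periodic orbits and the near-singularity periodic orbits built in the previous step then allow one to splice in long excursions near $\sigma$ with any prescribed time proportion. The hardest part will be precisely this last gluing step, where uniform hyperbolicity is lost and one must combine specification inside $\Lambda_\mathcal{U}$ with the near-singularity excursions provided by the wildness hypothesis, in order to control simultaneously the proportion of time spent near each periodic piece and the long time logarithms produced by the roof function.
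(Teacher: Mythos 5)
Your proposal follows essentially the same two-step skeleton as the paper (ergodic decomposition; non-atomic ergodic measures handled by Remark~\ref{rmk:Xueting}; approximate $\delta_\sigma$ using wildness), but it diverges on two points worth singling out.

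First, for the approximation of $\delta_\sigma$ you construct explicit $\alpha$-periodic points $x_n$ with $-\log\epsilon_n \gg p_n$ and lift them to flow periodic orbits; the paper instead works with arbitrary ergodic $P$-invariant measures $\mu_n$ satisfying $\int \log|\alpha'|\,d\mu_n\to\infty$, shows via the roof-function estimate \eqref{eq:varrho-x}--\eqref{eq:varrho-2} and Birkhoff's theorem that the lifted flow measures converge to $\delta_\sigma$, and then falls back on Remark~\ref{rmk:Xueting} to replace each (non-atomic or periodic) $\mu_n$ by periodic measures. This is a real difference: the paper's route avoids a closing argument near the discontinuity entirely, whereas your construction hides one. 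Be careful here: wildness gives you ergodic measures with $\int\log|\alpha'|\,d\nu\to\infty$, but closing a long typical segment of such a $\nu$ does not automatically yield a periodic point with $-\log\epsilon_n\gg p_n$ --- the large Birkhoff average could a priori be spread across many moderately close approaches rather than one very close one (one bounds $-\log\epsilon_n$ from below by $p_n\cdot\int\log|\alpha'|\,d\nu_n / (\#\text{close approaches})$, not by $p_n\cdot\int\log|\alpha'|\,d\nu_n$). You would need to either choose the segment length $p_n$ to isolate a single deep excursion, or replace your condition $-\log\epsilon_n\gg p_n$ by the weaker and more robust condition $\int \varrho\,d\nu_n\to\infty$, which is exactly what the paper does.

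Second, you correctly flag a gap that the paper glosses over: the reduction "it suffices to approximate ergodic measures by periodic ones" tacitly uses that a finite convex combination $\sum_i c_i\mu_{q_i}$ of periodic orbit measures is itself approximable by a single periodic orbit measure, which does not follow from ergodic decomposition alone. Your fix is the right one, and is cleaner than you fear: since the horseshoes $\Lambda_{\mathcal U_\vep}$ are nested and exhaust all non-singular compact invariant subsets, any finite collection of (non-singular) periodic orbits --- including the near-singularity ones you constructed --- lies in a common transitive hyperbolic set $\Lambda_{\mathcal U_\vep}$ (Lemma~\ref{le:transitive}), where Sigmund's specification argument applies verbatim. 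No loss of uniform hyperbolicity occurs at this step; the "splicing of long excursions near $\sigma$" you worry about is not needed, because those excursions are already baked into the periodic orbits $q_0$ produced in the previous step.
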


\begin{proof}
By the ergodic decomposition theorem (see e.g. \cite{KH}) it is enough to prove that every ergodic probability
can be weak$^*$ approximated by probabilities supported on periodic points.
Since this is the case for every non-atomic ergodic probability
(recall Remark~\ref{rmk:Xueting}) it remains to prove that the Dirac measure $\delta_\sigma$ is also weak$^*$ approximated by periodic probabilities.

Recall that the Lorenz attractor is modeled by a suspension flow. Moreover, changing the cross-section if
necessary, one can assume that the
return time function $\varrho$ is constant along the strong-stable leaves (see e.g. \cite{ArVar})
and
\begin{align}\label{eq:varrho-x}
  \varrho(x)= -\frac1{\lambda_1} \log|x|+s(x)
\end{align}
for some bounded function $s$. In particular, since $\alpha'(x)=\gamma x^{1-\gamma}$ in a neighborhood of $0$
(recall \eqref{eq:defLorenz}) then
there are constants $C_1,C_2>0$ so that
\begin{align}\label{eq:varrho-2}
C_1 \log |\alpha'(x)| \le   \varrho(x)  \le  C_2 \log |\alpha'(x)|
\quad\text{for all $x\neq 0$.}
\end{align}

We claim that any sequence $(\mu_n)_n$ of $P$-invariant ergodic probability measures such that $\int \log |\alpha'(x)| \, d\mu_n \to \infty$
as $n\to \infty$ converges in the weak$^*$ topology to the Dirac measure $\delta_\sigma$.
Given $\vep>0$ arbitrary let $V_\vep$ be the ball of radius $\vep$ around $\sigma$. Choose a local cross-section
$\Sigma_\vep \subset \Sigma$
 so that the following hold:
\begin{itemize}
\item[(i)] the piece of orbit $\{X^t(x) \colon 0\le t\le \varrho(x)\}$ of a point $x\in \Sigma_\vep$ intersects $V_\vep$, and
\item[(ii)] $\Leb\, \Big(0\le t\le \varrho(x) \colon X^t(x)\in V_\vep\Big) > (1-\vep) \, \varrho(x)  $ for every $x\in \Sigma_\vep$.
\end{itemize}
The construction of a local cross-section $\Sigma_\vep$ satisfying (i) and (ii) is possible taking $\Sigma_\vep$ as a small neighborhood of the stable manifold of $W^s(\sigma)\cap \Sigma$  and noting that, due to the continuous dependence of solutions of ordinary differential equations
and that the return time function can be chosen arbitrary large for points close to the stable manifold, which ensures that these
orbits spend as much proportion of time  near the singularity
as desired.

By construction $\Sigma_\vep$ is a small neighborhood of the stable manifold of $W^s(\sigma)\cap \Sigma$,
hence the roof function $\varrho$ is bounded by some constant $C_\vep$ all points in $\Sigma\setminus \Sigma_\vep$. Relation \eqref{eq:varrho-2}
implies that
$
\int_{\Sigma_\vep} \varrho \, d\mu_n \to \infty
\;\text{as}\; n\to\infty.
$
For any $T>0$ let $N(x,T)\ge 1$ be the unique integer such that
$\sum_{j=0}^{N(x,T)-1} \varrho(P^j(x)) \le T < \sum_{j=0}^{N(x,T)} \varrho(P^j(x))$.
Then, Birkhoff's ergodic theorem ensures that for $\mu_n$-a.e. $x$
$$\frac{T}{N(x,T)} \to \int \varrho \, d\mu_n
\quad\text{as} \;T\to\infty$$
and, consequently,
\begin{align}\label{eq:convDirac}
\mu_n(V_\vep)=\lim_{T\to\infty} \frac1T \Leb\, \Big(0\le t\le T \colon X^t(x)\in V_\vep\Big)
	& \ge \lim_{T\to\infty}  \frac{N(x,T)}{T} \cdot \frac{1}{N(x,T)} \sum_{j=0}^{N(x,T)-1} (1-\vep) (\varrho \cdot \chi_{\Sigma_\vep} )(P^j(x))
	\nonumber \\
	& = (1-\vep)\, \frac{\int_{\Sigma_\vep} \varrho \, d\mu_n}{\int \varrho \, d\mu_n}
	\ge (1-\vep)\, \Big[1- \frac{C_\vep}{\int \varrho \, d\mu_n}\Big]
\end{align}
for $\mu_n$-almost every $x\in \Sigma$. Note that the right hand-side of equation~\eqref{eq:convDirac} tends to $1-\vep$ as $n\to\infty$.
In particular, since $\vep$ was taken arbitrary,
 any accumulation point $\mu$ of $(\mu_n)_n$ satisfies $\mu(\overline{V_\vep}) \ge 1-\vep$ for every $\vep>0$.
Hence, we conclude that $\mu_n \to\delta_\sigma$ in the weak$^*$ topology. This proves the lemma.
\end{proof}

\begin{remark}
In the case of singular-hyperbolic attractors, all singularities and periodic orbits are hyperbolic. In particular, for every $T>0$
the set of critical elements formed by singularities and periodic orbits of period smaller than $T$ consists of finitely many disjoint orbits.
Since there are countably many critical elements, it is not hard to check that there exists a $C^0$-residual subset of $C^0(M,\mathbb R)$ formed by observables $\varphi$ such that
$$\int \varphi \, d\mu_p \neq \int \varphi \, d\mu_q$$ for every distinct $p,q\in \text{Crit}(X)$, where $\mu_p=\delta_p$ if $p$ is a singularity
and $\mu_p=\frac1{\pi(p)} \int_0^{\pi(p)} \delta_{X^s(p)} \, ds$ whenever $p$ is a periodic point of period $\pi(p)>0$.
An analogous statement also holds for H\"older continuous observables.
\end{remark}

\subsection{Ergodic optimization for continuous observables }

The following lemma gives the starting point to the ergodic optimization
of continuous observables, and it ensures that maximizing measures for $C^0$-typical observables on singular-hyperbolic attractors
(hence for Lorenz attractors) are not supported at singularities.

\begin{lemma}\label{le:nonsingularm}
Let $\Lambda$ be a singular-hyperbolic attractor for $(X^t)_t$.
The set $$ {\mathcal O}=\Big\{\Phi \in  C^0(N,\mathbb R) \colon \int \Phi \, d\delta_\sigma < M(\Phi,(X^{t})_{t}), \; \, \forall \sigma \in \text{Sing}(X)\Big\}$$ is a $C^0$-open and dense subset of $C^0(N,\mathbb R)$.
\end{lemma}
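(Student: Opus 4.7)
Since the singularities in a singular-hyperbolic set are hyperbolic, hence isolated, and $\Lambda$ is compact, the set $\text{Sing}(X)\cap\Lambda=\{\sigma_1,\dots,\sigma_k\}$ is finite. For each $i$ the evaluation $\Phi\mapsto \int \Phi\, d\delta_{\sigma_i}=\Phi(\sigma_i)$ is $1$-Lipschitz with respect to $\|\cdot\|_0$, and the map $\Phi\mapsto M(\Phi,(X^{t})_{t})$ is $1$-Lipschitz as well because $|\int \Phi\,d\mu-\int\Psi\,d\mu|\le \|\Phi-\Psi\|_0$ holds for every invariant probability $\mu$. Consequently each strict inequality $\Phi(\sigma_i)<M(\Phi,(X^{t})_{t})$ cuts out an open subset of $C^{0}(N,\mathbb{R})$, and $\mathcal O$ is the finite intersection of these.

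\textbf{Density --- key measure.} Given $\Phi\in C^{0}(N,\mathbb R)$ and $\vep>0$ the plan is to construct $\tilde\Phi\in\mathcal O$ with $\|\tilde\Phi-\Phi\|_0<\vep$. Pick $\sigma_{i^*}$ realizing $\max_i\Phi(\sigma_i)$. The crucial preliminary step is to produce a periodic orbit $\gamma\subset\Lambda\setminus\text{Sing}(X)$ whose associated periodic measure $\mu_p$ satisfies
\[
\int \Phi\,d\mu_p \;>\; \Phi(\sigma_{i^*}) - \frac{\vep}{4}.
\]
In the wild Lorenz setting this follows at once from Lemma~\ref{le:mper}, as the space of periodic measures is weak$^*$-dense in $\mathcal M_1(\Lambda,(X^t)_t)$ and in particular $\delta_{\sigma_{i^*}}$ can be approximated by measures supported on periodic orbits in $\Lambda\setminus\text{Sing}(X)$. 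For a more general singular-hyperbolic attractor one would combine Remark~\ref{rmk:Xueting} on the periodic approximation of non-atomic ergodic measures with the existence of measures supported in the hyperbolic sub-horseshoes $\Lambda_{\cU}$ of~\eqref{eq:constrainedM} (cf.\ Lemma~\ref{le:transitive}) whose integrals of $\Phi$ can be made arbitrarily close to $\Phi(\sigma_{i^*})$.

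\textbf{Density --- perturbation.} Since $\gamma$ is compact and disjoint from the finite set $\{\sigma_1,\dots,\sigma_k\}$, I choose pairwise disjoint open neighborhoods $U_i\ni\sigma_i$ with $\overline{U_i}\cap\gamma=\emptyset$ and continuous bump functions $\rho_i\colon N\to[0,1]$ with $\rho_i(\sigma_i)=1$ and $\supp\rho_i\subset U_i$. Set
\[
\tilde\Phi:=\Phi-\frac{\vep}{2}\sum_{i=1}^k \rho_i,
\]
so that $\|\tilde\Phi-\Phi\|_0\le \vep/2<\vep$ and $\tilde\Phi(\sigma_i)=\Phi(\sigma_i)-\vep/2$ for every $i$. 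Because each $\rho_i$ vanishes identically on $\gamma$, $\int\tilde\Phi\,d\mu_p=\int\Phi\,d\mu_p$, and therefore
\[
M(\tilde\Phi,(X^{t})_{t}) \;\ge\; \int\tilde\Phi\,d\mu_p \;>\; \Phi(\sigma_{i^*})-\frac{\vep}{4} \;\ge\; \Phi(\sigma_i)-\frac{\vep}{4} \;>\; \tilde\Phi(\sigma_i)
\]
for every $i$, which yields $\tilde\Phi\in\mathcal O$ and finishes the proof. The main obstacle is precisely the production of $\mu_p$: it is the step that ties the lemma to the property that each Dirac measure $\delta_\sigma$ at a singularity lies in the weak$^*$-closure of the set of invariant measures supported away from $\text{Sing}(X)$, a property furnished for wild Lorenz attractors by Lemma~\ref{le:mper}.
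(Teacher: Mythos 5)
Your proof is correct and follows essentially the same approach as the paper: openness from the ($1$-Lipschitz) continuity of $\Phi\mapsto\Phi(\sigma_i)$ and $\Phi\mapsto M(\Phi,(X^t)_t)$, and density by pushing $\Phi$ down on small neighborhoods of the singularities while a competing periodic measure, furnished by Lemma~\ref{le:mper}, is left untouched. The only substantive difference is cosmetic — you perform one simultaneous bump-function perturbation near all the singularities, whereas the paper iterates through them one at a time — and both arguments share the same reliance on Lemma~\ref{le:mper}, which is what tacitly restricts the scope of the density step to wild Lorenz attractors (your suggested workaround via Remark~\ref{rmk:Xueting} would not close that gap on its own, since that remark concerns periodic approximation of \emph{non-atomic} measures only).
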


\begin{proof}
Since the singularities of singular-hyperbolic attractors are hyperbolic, there are finitely many of them, which we denote by
$\{\sigma_1, \sigma_2, \dots, \sigma_n\}$, and it is clear that $\mathcal O$ is $C^0$-open.

It remains to prove that $\mathcal O$ is $C^0$-dense.
Assume that $\Phi \in  C^0(N,\mathbb R)$ and that there exists ${\sigma_i} \in \text{Sing}(X)$ so that
$\int \Phi \, d\delta_{\sigma_i} = M(\Phi,(X^{t})_{t})$.
We use that for any singularity ${\sigma_i}\in \Lambda$ the probability measure $\delta_{\sigma_i}$ is accumulated by invariant
and ergodic measures associated to periodic measures $(\mu_{p})_{p\in Per((X^t)_t)}$ (recall Lemma~\ref{le:mper}).
In particular,
$$
M(\Phi,(X^{t})_{t})= \int \Phi \, d\delta_{\sigma_i} = \sup_{p \in \text{Per}((X^t)_t)} \int \Phi \, d\mu_p.
$$
For any $\vep>0$ let $p \in \text{Per}((X^t)_t)$ be such that
$\int \Phi \, d\mu_p > \int \Phi \, d\delta_{\sigma_i} -\frac\vep{2}$ and let $0<\delta=\delta({\sigma_i},p) \ll \vep$ be such that
$\min_{t\in [0,\pi(p)]} \dist({\sigma_i}, X^t(p)) \ge 2\delta>0$. Performing a $\vep$-$C^0$-small perturbation supported
in a $\delta$-neighborhood of $p$ we obtain a $C^0$-observable $\Phi_1$ so that $\Phi_1 \equiv \Phi$ on $M\setminus B({\sigma_i},\delta)$ and
$\Phi_1({\sigma_i}) < \Phi({\sigma_i})-\vep$. In particular
$$
\int \Phi_1 \, d\delta_{\sigma_i} < \int \Phi \, d\delta_{\sigma_i} -\vep < \int \Phi \, d\mu_p -\frac\vep2 = \int \Phi_1 \, d\mu_p
-\frac\vep2  \le M(\Phi_1) -\frac\vep2,
$$
hence $\delta_{\sigma_i}$ is not a $\Phi_1$-maximizing measure. Since there are finitely many singularities, after
a finite number of $C^0$-perturbations with disjoint supports we obtain a $C^0$-observable $\Phi_k$ that is $\vep$-$C^0$-close
to $\Phi_k$ for which no Dirac measure at a singularity is a
$\Phi_k$-maximizing measure. This shows that $\Phi_k\in \mathcal O$ and proves the lemma.
\end{proof}

Let $\mathcal O$ be given by the previous lemma. Using Theorem~\ref{Jenkinson} together with  Remarks~\ref{rmk:Holder} and ~\ref{rmk:thm-unique-flows}, we conclude that there exists a $C^0$-residual subset $\mathfrak R_1 \subset C^0(N,\mathbb R)$ so that every
$\Phi\in \mathcal R$ has a unique maximizing measure. Consider the Baire residual subset
$\mathcal R_1:=\mathfrak R_1 \cap \mathcal O$.  By construction, every  $\Phi \in \mathcal R_1$ has a unique maximizing measure $\mu$ (hence ergodic). Item (1) of Theorem~\ref{TeoALorenz} and the following characterization of the support of maximizing measures:

\begin{lemma}\label{le:ntom}
Let $\Lambda$ be a Lorenz attractor.
There exists a $C^0$-Baire generic subset of $C^0(N,\mathbb R)$ formed by observables
which have a unique maximizing measure $\mu$, which is non-atomic and so that $\supp \mu$ contains a singularity.
\end{lemma}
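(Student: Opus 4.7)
The plan is to refine the residual set $\mathcal R_1 = \mathfrak R_1 \cap \mathcal O$ constructed just above (on which $\mu_\Phi$ is the unique, and hence ergodic, maximizing measure, and is distinct from $\delta_\sigma$ for every singularity) by intersecting it with a $G_\delta$-dense family of observables whose maximizer charges every neighborhood of $\text{Sing}(X)$. Non-atomicity will then follow automatically by the following case analysis: an ergodic invariant measure on $\Lambda$ with an atom must be supported on a single critical element, and the only possibilities on $\Lambda$ are Dirac measures at singularities (already excluded on $\mathcal R_1$) and periodic measures $\mu_p$, whose support avoids $\text{Sing}(X)$ and hence contradicts $\supp \mu_\Phi \cap \text{Sing}(X) \neq \emptyset$.

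The core step is to show, for each open neighborhood $V$ of $\text{Sing}(X)$, that the set
\[
\mathcal A_V \; := \; \{ \Phi \in \mathcal R_1 : \mu_\Phi(V) > 0 \}
\]
is $C^0$-open and $C^0$-dense in $\mathcal R_1$. Openness is routine: by the standard byproduct of Jenkinson's argument the argmax map $\Phi \mapsto \mu_\Phi$ is weak$^*$-continuous on $\mathfrak R_1$, while $\mu \mapsto \mu(V)$ is lower semicontinuous for open $V$, so $\mu_{\Phi_n}(V) > 0$ for every $\Phi_n$ sufficiently $C^0$-close to any $\Phi \in \mathcal A_V$. Intersecting $\mathcal A_{V_k}$ over a countable basis $V_k \searrow \text{Sing}(X)$ of open neighborhoods, and invoking the compactness of $\text{Sing}(X)$, then yields the desired residual subset.

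For the density of $\mathcal A_V$ I would run a Sigmund--Morris scheme mirroring the proof of Theorem~\ref{morris}, using Lemma~\ref{le:mper} as a substitute for the periodic gluing orbit property. The first sub-step is to check that
\[
\mathcal Q_V \; := \; \{ \mu \in \mathcal M_e(\Lambda) : \mu(V) > 0 \}
\]
is $G_\delta$-dense in $\mathcal M_e(\Lambda)$: given any ergodic $\mu$, one takes a long periodic orbit approximating it via Lemma~\ref{le:mper}, and glues it to a short periodic loop entering $V$ (for instance, one of the periodic orbits approaching $\delta_\sigma$ provided by Lemma~\ref{le:mper}) by means of the extended Liao closing and singular-hyperbolic shadowing lemmas (Remark~\ref{rmk:Xueting} and \cite{STV,HW}); this produces an ergodic periodic measure weak$^*$-close to $\mu$ but charging $V$. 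The second sub-step is the Jenkinson--Morris transfer of this $G_\delta$-density from measures to observables, done as in the proof of Theorem~\ref{morris} (equivalently, invoking the flow version of Corollary~1.2 in \cite{morris2010ergodic}): starting from $\Phi \in \mathcal R_1$ with $\mu_\Phi(V) = 0$, one adds to $\Phi$ a small continuous coboundary-type potential $\vep h_\nu$ built from an ergodic $\nu \in \mathcal Q_V$ close to $\mu_\Phi$, so that the new argmax is weak$^*$-close to $\nu$ and in particular charges $V$.

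The main obstacle is precisely the density of $\mathcal Q_V$ in $\mathcal M_e(\Lambda)$: since the classical gluing orbit property is unavailable in a full neighborhood of the singularity, the Sigmund construction must replace it with the extended Liao closing lemma of \cite{HW} combined with the singular-hyperbolic shadowing lemma of \cite{STV}; everything else --- the observable-side transfer, the Baire intersection over a countable basis, and the final case analysis yielding non-atomicity --- is then standard bookkeeping.
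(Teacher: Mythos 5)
Your route is genuinely different from the paper's. The paper does not attempt the Sigmund--Morris transfer at all on the Lorenz attractor. Instead it introduces the constrained maximum $M_{\mathcal U_\vep}(\Phi)$ over measures carried by the hyperbolic subsets $\Lambda_{\mathcal U_\vep}$, and argues by cases on the function $\vep\mapsto M_{\mathcal U_\vep}(\Phi)$: if $M_{\mathcal U_\vep}(\Phi)<M(\Phi)$ for all $\vep>0$, then $\supp\mu$ cannot be compactly contained in $\Lambda\setminus\mathrm{Sing}(X)$, so it must meet a singularity; and the degenerate case $M_{\mathcal U_\vep}(\Phi)=M(\Phi)$ for a range of $\vep$'s is ruled out generically by restricting to two nested hyperbolic sets $\Lambda_{\mathcal U_{\vep_2}}\subsetneq\Lambda_{\mathcal U_{\vep_1}}$ and using full support of the generic maximizer on each (via Theorem~\ref{TeoA:hyper-subset} and the restriction submersions); a single global maximizer cannot have support equal to two different sets. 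That argument side-steps the need for any specification-type mechanism on the full attractor, which is precisely what the authors flag as obstructed in Section~\ref{sec:final} (the specification property fails for singular-hyperbolic attractors). Your approach instead pushes the Morris transfer directly, which is the missing-ingredient road the paper deliberately avoids; it buys a conceptually transparent argument but loads the difficulty onto the measure-side density.

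The one step in which your write-up has a gap is the density of $\mathcal Q_V$. The extended Liao closing lemma \cite{HW} and the singular-hyperbolic shadowing lemma of \cite{STV} close a \emph{single} quasi-hyperbolic orbit string; they do not by themselves allow you to glue two given periodic orbits $p$ (approximating $\mu$) and $q$ (entering $V$) into a single periodic orbit whose empirical measure is weak$^*$-close to $\mu_p$. If you attempt this via a concatenated pseudo-orbit on $\Lambda$, the transition arcs from a neighborhood of $p$ to a neighborhood of $q$ (and back) could spend an uncontrolled amount of time near the singularity, so the resulting closed orbit need not project weak$^*$-close to $\mu_p$; in effect you would be reintroducing the very specification-time bound that fails here. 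The correct fix, which is available in this setting, is to observe (as in Lemma~\ref{le:transitive}) that any two hyperbolic periodic orbits of a geometric Lorenz attractor are homoclinically related, hence contained in a common hyperbolic horseshoe $\Gamma\subset\Lambda$; inside $\Gamma$ ordinary specification holds with a finite gluing time depending only on $\Gamma$, so shadowing $\mathcal O(p)$ for $n$ periods and $\mathcal O(q)$ once yields, for $n$ large, a periodic orbit $z\subset\Gamma$ with $\mu_z$ arbitrarily close to $\mu_p$ and $\mathcal O(z)\cap V\ne\emptyset$. With this replacement (and the minor care that your $\mathcal A_V$ is a residual subset of $C^0(N,\mathbb R)$ because a relatively open dense subset of the residual set $\mathcal R_1$ is itself residual), your proposal goes through. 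The final case analysis for non-atomicity, and the passage through a countable basis of neighborhoods of the finite set $\mathrm{Sing}(X)$, are both fine.
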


\begin{proof}
Let $\Phi\in \mathcal R_1$ and let $\mu$ be the unique (ergodic) $\Phi$-maximizing measure.
Since $\mathcal R_1 \subset \mathcal O$ then $\mu$ cannot be the Dirac mass at the singularity.
In order to conclude that $\mu$ is non-atomic it is enough to show that the support of $\mu$ contains the singularity.
This requires extra information, of independent interest, on the relative maximization at hyperbolic sets as follows.

For any $\vep>0$ let $\mathcal U_\vep$ denote the $\vep$-neighborhood of $\text{Sing}(X)$.
We describe three situations to consider for the shape of the non-increasing function $\vep \to M_{\cU_\vep}(\Phi)$
(see Figure~2).
\begin{figure}[htb]
 \subfigure{
    \includegraphics[scale=0.25]{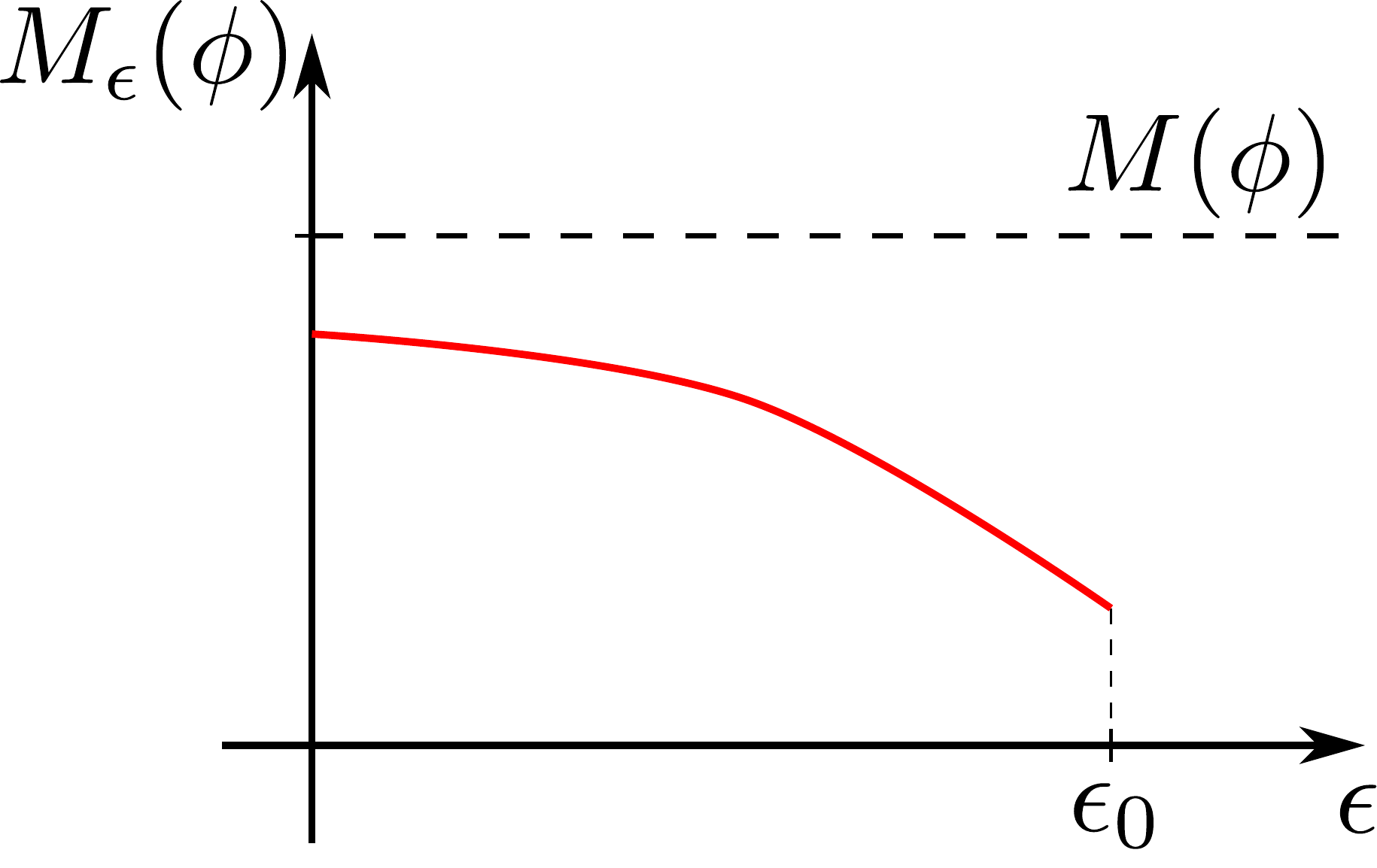}
  }
  \subfigure{
    \includegraphics[scale=0.25]{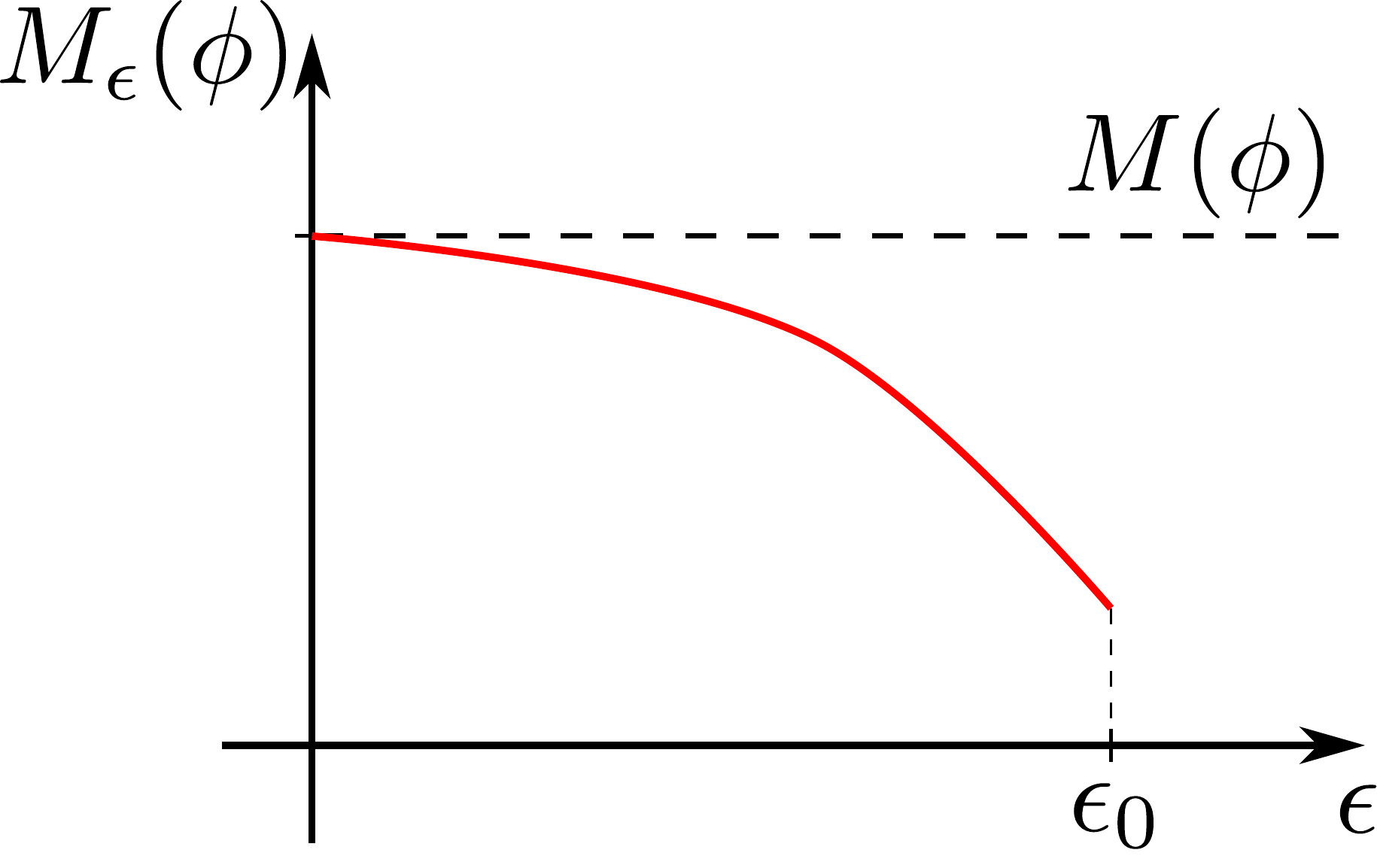}
  }
  \subfigure{
    \includegraphics[scale=0.25]{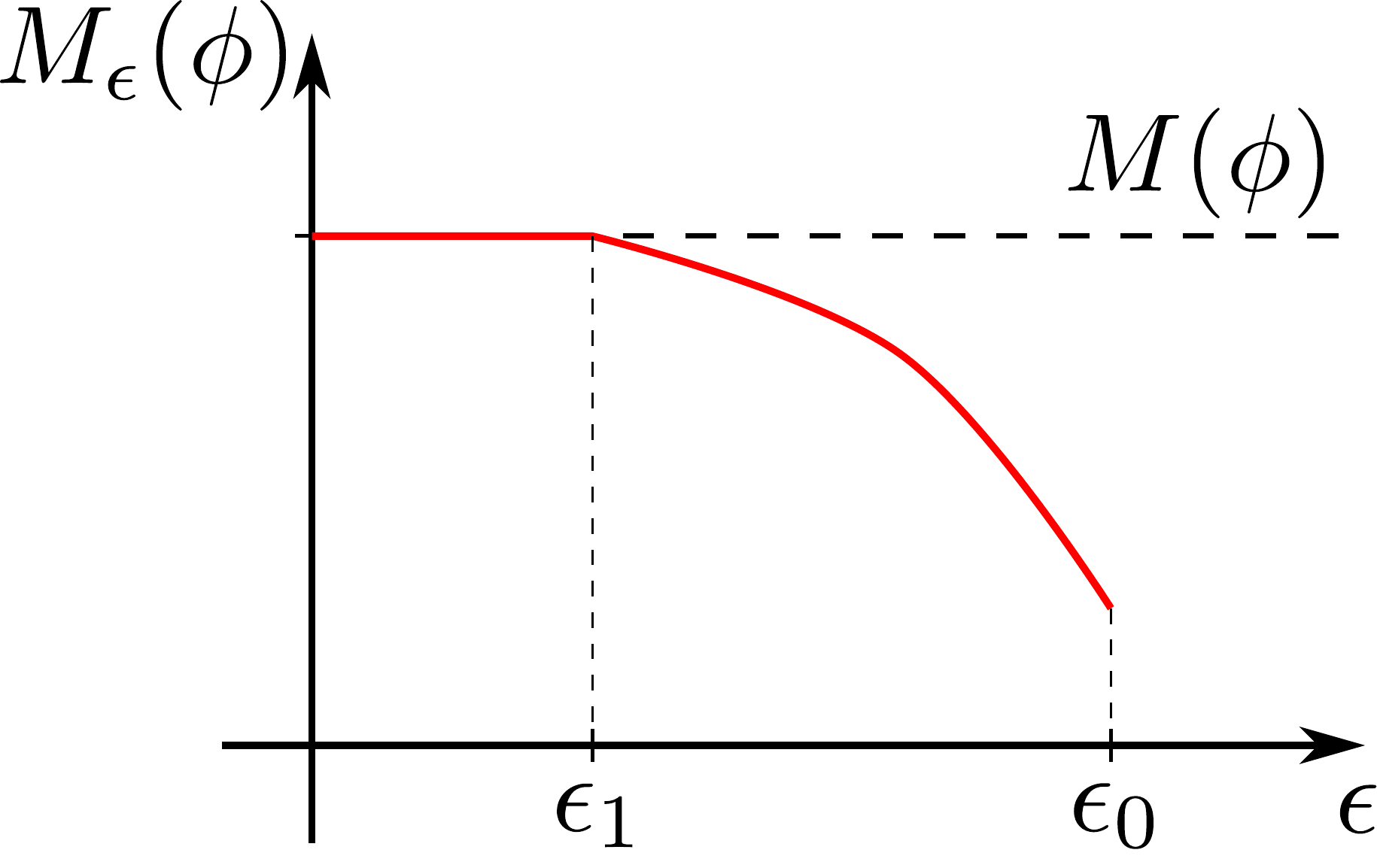}
  }
\caption{Possible shapes for the function $\vep \to M_{\cU_\vep}(\Phi)$ }
\end{figure}
In cases a. and b. we have $M_{\cU_\vep}(\Phi) < M(\Phi)$  for every $\vep>0$.
If $\supp\mu \subset \Lambda$ contains no singularity then there exists $\vep_0>0$ such that
$\supp\mu \cap \cU_{\vep_0} =\emptyset$, thus $\supp\mu \subset \Lambda_{\cU_{\vep_0}}$
contradicting the fact that $M_{\cU_{\vep_0}}(\Phi) < M(\Phi)$. Thus proves that
$\supp\mu \subset \Lambda$ contains a singularity.

We claim that case c. does not occur for $C^0$-generic observables. Indeed, let $\Phi\in C^0(N,\mathbb R)$
and $\vep_1>0$ be so that $M_{\cU_\vep}(\Phi) =M(\Phi,(X^{t})_{t})$ for every $0<\vep\le \vep_1$.
Pick $0<\vep_2<\vep_1$ such that $\Lambda_{\cU_{\vep_2}} \subsetneq \Lambda_{\cU_{\vep_1}}$.
Using that each map $C^0(N,\mathbb R) \to C^0(\Lambda_{\cU_{\vep_i}},\mathbb R)$ given by
$\Phi \mapsto \Phi\mid_{\Lambda_{\cU_{\vep_i}}}$ is a submersion ($i=1,2$)
and Theorem~\ref{TeoA:hyper-subset} we deduce that
there exist $C^0$-Baire residual subsets $\mathcal R_{\vep_1}, \mathcal R_{\vep_2} \subset C^0(N,\mathbb R)$ such that
every $\Psi\in \mathcal R_i$ has a unique $\Psi$-maximizing measure on $\Lambda_{\cU_{\vep_i}}$, and its support coincides
with $\Lambda_{\cU_{\vep_i}}$ ($i=1,2$).
Therefore, any observable $\Psi$ in the
$C^0$-Baire residual subset  $\mathcal R_1 \cap \mathcal R_{\vep_1}\cap \mathcal R_{\vep_2}$  has a unique maximizing
measure on $\Lambda$, $\Lambda_{\cU_{\vep_2}}$ and $\Lambda_{\cU_{\vep_1}}$. Since
$M_{\cU_{\vep_1}}(\Phi) =M_{\cU_{\vep_2}}(\Phi) =M(\Phi,(X^{t})_{t})$ we conclude that $\mu_{\vep_2}=\mu_{\vep_1}$,
which contradicts the fact that $\Lambda_{\cU_{\vep_2}} \subsetneq \Lambda_{\cU_{\vep_1}}$.
Thus, $C^0$-generic observables do not satisfy case c.
This finishes the proof of the lemma.
\end{proof}

\medskip

\subsection{Ergodic optimization for generic Lorenz attractors}

In this subsection we will prove Corollary~\ref{cor:generic}.
Let $M$ be a 3-dimensional compact boundless Riemannian manifold and let
$\mathcal R \subset \mathfrak{X}^1(M)$ be the $C^1$-residual subset given by Proposition~\ref{MP}.

Given a vector field $X\in \mathcal R$, by item (1) of Theorem \ref{TeoALorenz} there exists a $C^0$-residual subset of observables $\mathcal R_X\subset C^0(N,\mathbb R)$
such that every $\Phi\in \mathcal R_X$ has a unique $\Phi$-maximizing measure $\mu$ with respect to $(X^t)_t$, whose support
contains a singularity and is not atomic.
Then, for $\mu$-almost every $x$ there exists
$y\in W^u(\sigma)\setminus\{\sigma\}$ such that $y\in \omega(x)$. Using continuous dependence on initial conditions and the fact that
$\overline{W^{u,\pm}(\sigma)}=\Lambda$ we conclude that $\supp \mu=\Lambda$.
By construction the residual subset
$\bigcup_{X\in \mathcal R} \, \{X\} \times \mathcal R_X \subset\mathfrak{X}^1(M) \times C^0(N,\mathbb R)$
satisfies the requirements of the corollary. \hfill $\square$

\subsection{Ergodic optimization for H\"older continuous observables }

Here we prove item (2) in Theorem \ref{TeoALorenz}.
Using that $C^\alpha(N,\mathbb R) \subset C^0(N,\mathbb R)$ is a $C^0$-dense subspace, Theorem~\ref{Jenkinson} and Remarks~\ref{rmk:Holder} and ~\ref{rmk:thm-unique-flows}, we conclude that there
exists a $C^\alpha$-residual subset $\mathfrak R_2 \subset C^\alpha(N,\mathbb R)$ so that every $\Phi\in \mathfrak R_2$ has a unique
$\Phi$-maximizing measure.
Given $\Phi\in \mathfrak R_2$, if $\mu$ denotes the unique $\Phi$-maximizing measure then there are three cases to consider:

\medskip
(i) If $M(\Phi)=\Phi(\sigma)$
then $\mu=\delta_\sigma$ is the Dirac measure at the singularity;

\medskip
(ii) If there exists $\vep_0>0$ such that $M_{\cU_{\vep}}(\Phi) = M(\Phi)$ for every $0<\vep\le \vep_0$, since the restriction
map $C^\al(N,\mathbb R) \to C^\al(\Lambda_{\cU_{\vep}},\mathbb R)$ is a submersion and generic H\"older observables on hyperbolic sets have maximizing measures supported on periodic orbits we conclude that the unique maximizing measure $\mu$ of a $C^\al$-Baire
generic observable
it is supported on a periodic orbit in $\Lambda_{\cU_{\vep_0}}$; and

\medskip
(iii) If $M(\Phi)> \Phi(\sigma)$
and $M_{\cU_{\vep}}(\Phi) < M(\Phi)$ for every $\vep>0$, the same argument involved in the proof of item (1) in Theorem \ref{TeoALorenz}
ensures that $\mu$
is not atomic and its support contains the singularity.

\medskip
This finishes the proof of item (2) in Theorem \ref{TeoALorenz}. \hfill $\square$

\begin{remark}
We expect that $C^\al$-generic observables have unique maximizing measures
supported at some critical element, similarly to the case of hyperbolic basic sets.
This is the case if one assures that item (iii) above holds for observables in a meager
subset. This is not immediate as the periodic measures for the restricted maximum $M_{\cU_{\vep}}(\Phi)$
could have smaller frequency of visits to neighborhoods of the singularity as $\vep$ tends to zero,
but still to accumulate on a invariant measures having the singularity in its support.
\end{remark}

\begin{remark}
The case when the unique maximizing measure is supported at a singularity is somewhat rare. In fact, Lemma~\ref{le:nonsingularm}
ensures that any $\Phi \in C^\al(N,\mathbb R)$ which has a maximizing measure supported at a singularity is $C^0$-approximated by $C^0$-open sets of observables in $C^\al(N,\mathbb R)$ for which Dirac measures at singularities are not maximizing measures.
In particular there exists a $C^\al$-open and $C^0$-dense subset of the residual subset $\mathcal R \subset C^\al(N,\mathbb R)$ formed by observables admitting a unique and periodic (non-singular)  maximizing measure.
\end{remark}

\section{Final comments}\label{sec:final}

The ergodic optimization for hyperbolic and singular-hyperbolic flows is still giving first steps and have very few contributions.
Let us describe some of the possible  future directions of research in this topic.
First, while one expects maximizing measures for typical
continuous observables to be fully supported and of zero topological entropy, we could not prove this in full strength in
Theorem~\ref{TeoALorenz}, as a limitation of the approximation method. Indeed, as maximizing measures are
obtained as weak$^*$ limits of maximizing measures with zero entropy and fully supported on the approximating horseshoes, these
may
have positive entropy. A first interesting question is whether the measure theoretic entropies associated
to these sequences of maximizing measures has some semi-continuity.  Since singular-hyperbolic attractors fail to satisfy the specification property \cite{SVY},
one cannot expect to use the methods in \cite{morris2010ergodic}. A second question that arises naturally is related to
the proof of Corollary~\ref{cor:generic} and the intrinsic nature of singular-hyperbolic attractors:
does every compact, nontrivial and transitive subset of a singular-hyperbolic attractor coincide with the attractor itself?

An alternative method to describe the ergodic optimization for three-dimensional singular-hyperbolic attractors
(including the Lorenz attractor) could follow from the construction of calibrated sub-actions, following the approach
of Lopes and Thieullen \cite{lopes2005sub} in the case of Anosov flows. Indeed, since singular-hyperbolic attractors
admit suitable cross-sections and can be modeled by suspension flows over maps with countable Markov partitions
(see e.g. \cite{araujo2010three}), it sounds reasonable
that the construction of sub-actions for Young towers in \cite{Branton} may
be pushed to the context of suspension flows of maps modeled by Young towers and ultimately to the realm of singular-hyperbolic attractors. The ergodic optimization for Lorenz attractors and H\"older potentials seems not immediate from this method. Indeed, the $C^0$-topology is crucial in order to perturb observables in a small open neighborhood of the singularities
(recall e.g. Lemma~\ref{le:nonsingularm}).

A further interesting question, inspired by \cite{contreras2001lyapunov},
concerns the case of  maximizing measures for the  Lyapunov exponents. In the case of three-dimensional
hyperbolic flows, this corresponds to the description of the maximizing measures with respect to the observable
$-\log |\det DX^t \mid E^u|$. In this case the vector field and the observable are coupled, which demands
subtle perturbations of the underlying dynamics.

Finally, it seems challenging to describe the ergodic optimization of continuous flows (or Lipschitz vector fields) when
the observable is fixed. Such a problem was addressed in \cite{AT} in the discrete-time context, but an extension to the
context of continuous flows seems to face fundamental difficulties raised by the lack of perturbation methods for
flows with low regularity.

\vspace{.1cm}
\subsection*{Acknowledgements}
This work was part of the first author's PhD thesis at UFBA, and it was partially supported by CNPq-Brazil and CAPES-Brazil.
The third author was partially supported by CNPq-Brazil and by FCT-Portugal.
The authors are indebted to V. Ara\'ujo, F. Pedreira, V. Pinheiro and X. Tian
for useful comments and calling their attention to \cite{HW,MP, Pedreira}.


\end{document}